\newtheorem{proposition}{Proposition}[section]
\newtheorem{lemma}[proposition]{Lemma}
\newtheorem{theorem}[proposition]{Theorem}
\theoremstyle{definition}
\theoremstyle{remark}
\newcommand{\thlabel}[1]{\label{th:#1}}
\newcommand{\thref}[1]{Theorem~\ref{th:#1}}
\newcommand{\selabel}[1]{\label{se:#1}}
\newcommand{\seref}[1]{Section~\ref{se:#1}}
\newcommand{\lelabel}[1]{\label{le:#1}}
\newcommand{\leref}[1]{Lemma~\ref{le:#1}}
\newcommand{\prlabel}[1]{\label{pr:#1}}
\newcommand{\prref}[1]{Proposition~\ref{pr:#1}}
\newcommand{\eqlabel}[1]{\label{eq:#1}}
\newcommand{\equref}[1]{(\ref{eq:#1})}
\title{\textbf{On a class of Fock-like representations for Lie superalgebras}}
\author{\textbf{\textsc{K. Kanakoglou}} \\ {\small kanakoglou@hotmail.com} {\scriptsize and}
{\small kanakoglou@ifm.umich.mx} \\
\small{School of Mathematics},
\small{Aristotle University of Thessaloniki - \textsc{Auth}} \\
\small{54124 Thessaloniki, \textsc{Greece}} \\  \\
\textbf{\textsc{A. Herrera-Aguilar}} \\ {\small alfredo.herrera.aguilar@gmail.com} \\
\small{Instituto de F\'{\i}sica y Matem\'{a}ticas - \textsc{Ifm}}, \\
\small{Universidad Michoacana de San Nicol\'{a}s de Hidalgo - \textsc{Umsnh}} \\
\small{Edificio C-3, Cd. Universitaria, CP 58040},
\small{Morelia, Michoac\'{a}n, \textsc{Mexico}}}
\date{}
\begin{document}

\maketitle

\begin{abstract}
{\small
Utilizing Lie superalgebra (LS) realizations via the Relative Parabose Set algebra $P_{BF}$, combined with earlier results on the Fock-like representations of $P_{BF}^{(1,1)}$, we proceed to the construction of a family of Fock-like representations of LSs: these are infinite dimensional, decomposable super-representations, which are parameterized by the value of a positive integer $p$. They can be constructed for any LS $L$, either initiating from a given $2$-dimensional, $\mathbb{Z}_{2}$-graded representation of $L$ or using its inclusion as a subalgebra of $P_{BF}^{(1,1)}$. As an application we proceed in studying decompositions with respect to various low-dimensional Lie algebras and superalgebras.
}
\end{abstract}

% Mathematics Subject Classification 2010
\textbf{MSC2010:} 17B60, 17B70, 17B75, 17B35, 16W50, 17B81

%Physics and Astronomy Classification Scheme
\textbf{PACS2010:} 03.65.Fd, 02.20.Sv, 02.10.Hh, 03.65.-w, 11.30.Pb, 12.60.Jv

% Keywords and phrases
\textbf{keywords:} Lie superalgebras, paraparticles, realizations, Fock spaces, representations, decomposability, branching rules, multiplicities

\section{Introduction}

In \cite{KaDaHa, KaDaHa2} we develop a family of realizations of an arbitrary Lie superalgebra (LS) via an algebra combining both parabosonic and parafermionic degrees of freedom, known in the literature as the Relative Parabose Set \cite{GreeMe} $P_{BF}$. It is an algebra with infinite generators (i.e. infinite parabosonic and parafermionic degrees of freedom) in the general case. In \cite{KaDaHa3, KaDaHa4, Ya2} the authors study the Relative Parabose Set algebra but for the special case for which we have a single parabosonic and a single parafermionic degree of freedom. We will use the notation $P_{BF}^{(1,1)}$ for this algebra. It is an algebra with 4 generators (two parabosonic and two parafermionic generators corresponding to a single degree of freedom each).

The purpose of this paper is to combine the results of \cite{KaDaHa, KaDaHa2} together with the results of \cite{KaDaHa3, KaDaHa4, Ya2} in order to construct a family (parametrized by the values of a positive integer $p$) of $\mathbb{Z}_{2}$-graded, infinite dimensional, decomposable representations for an arbitrary LS. The structure of the article is as follows:

In \seref{realizations}, we review results \cite{KaDaHa, KaDaHa2, Ya1} about the structure and the properties of the Relative Parabose Set $P_{BF}$ in the general case of infinite degrees of freedom (infinite generators as an algebra). Moreover, we investigate some of its subalgebras, we introduce realizations of an arbitrary LS via elements of $P_{BF}$ and we study the algebraic properties of the constructed realizations.

In \seref{singleFockspace}, we review from \cite{KaDaHa3, KaDaHa4, Ya2} the construction of the Fock-like representations for the Relative Parabose Set $P_{BF}^{(1,1)}$ algebra. We introduce some notation and terminology to be used in the sequel and we give explicitly the structure of the carrier space, the formulae for the action of the generators, the irreducibility and the $(\mathbb{Z}_{2}\times \mathbb{Z}_{2})$-grading.

In \seref{Focklikemodule}, we combine the above results and proceed to the construction of a family of infinite dimensional, $\mathbb{Z}_{2}$-graded, decomposable representations for a Lie superalgebra $L$. This family of representations is parameterized by the values of a positive integer $p$. We will call these representations \textbf{Fock-like representations of $L$}. The only assumption we make is that we have available a 2-dimensional, $\mathbb{Z}_{2}$-graded representation of the Lie superalgebra $L$.

Finally in \seref{applLSrepr}, we proceed in some applications: Using realizations of various low-dimensional Lie algebras and superalgebras via paraparticles (either with the mappings described in \seref{realizations} or using their inclusions as subalgebras of $P_{BF}^{(1,1)}$) and the Fock-like representations of $P_{BF}^{(1,1)}$ we proceed in studying decompositions of the Fock-like spaces under various different actions.

Before closing this introduction we remark that all vector spaces, algebras and tensor products in this article will be considered over the field of complex numbers $\mathbb{C}$ and that the prefix ``super'' will always amount to $\mathbb{Z}_{2}$-graded and used according to preference without further mentioning it.

\section{Lie superalgebra realizations via $P_{BF}$}  \selabel{realizations}

\paragraph{$\bullet$ \textbf{On the multiplicative structure of the Relative Parabose Set $P_{BF}$:}}
The Relative Parabose Set has been historically the only -together with the \emph{Relative Parafermi Set} $P_{FB}$- attempt for a mixture of interacting parabosonic and parafermionic degrees of freedom. We present it here in terms of generators and relations, adopting a handy notation: $P_{BF}$ is generated, as an associative algebra, by the (infinite) generators $b_{i}^{\xi}$, $f_{j}^{\eta}$, for all values $i,j = 1, 2, ...$ and $\xi, \eta = \pm$. The relations satisfied by the above generators are:

The usual trilinear relations of the parabosonic and the parafermionic algebras which can be compactly summarized as
\small{\begin{equation} \eqlabel{parab-paraf}
\begin{array}{c}
\big[ \{ b_{i}^{\xi},  b_{j}^{\eta}\}, b_{k}^{\epsilon}  \big] = (\epsilon - \eta)\delta_{jk}b_{i}^{\xi} + (\epsilon -
 \xi)\delta_{ik}b_{j}^{\eta}    \\
\big[ [ f_{i}^{\xi},  f_{j}^{\eta} ], f_{k}^{\epsilon}  \big] = \frac{1}{2}(\epsilon - \eta)^{2} \delta_{jk}f_{i}^{\xi} - \frac{1}{2}(\epsilon -
 \xi)^{2} \delta_{ik}f_{j}^{\eta}   \\
 \end{array}
\end{equation}}
for all values $i, j, k = 1, 2, ..., $ and $\xi, \eta, \epsilon = \pm$, \ together with the mixed trilinear relations
{\small
\begin{equation} \eqlabel{parabparaf}
\begin{array}{c}
\big[ \{ b_{k}^{\xi},  b_{l}^{\eta}\}, f_{m}^{\epsilon}  \big] = \big[ [ f_{k}^{\xi},  f_{l}^{\eta} ], b_{m}^{\epsilon}  \big] = 0    \\
\big[ \{ f_{k}^{\xi},  b_{l}^{\eta}\}, b_{m}^{\epsilon}  \big] = (\epsilon - \eta) \delta_{lm} f_{k}^{\xi},  \ \ \ \
\big\{ \{ b_{k}^{\xi},  f_{l}^{\eta}\}, f_{m}^{\epsilon}  \big\} = \frac{1}{2}(\epsilon - \eta)^{2} \delta_{lm} b_{k}^{\xi}
\end{array}
\end{equation}
}
for all values $k, l, m = 1, 2, ..., $ and $\xi, \eta, \epsilon = \pm$, \ which represent a kind of algebraically established interaction between parabosonic and parafermionic degrees of freedom and characterize the relative parabose set. If in relations \equref{parabparaf} we write down all combinations of values for the indices $\xi, \eta, \epsilon$ then 28 (algebraically) independent relations emerge. These can be found written explicitly in \cite{KaDaHa}.

One can easily observe that \equref{parab-paraf} involve only the parabosonic and the parafermionic degrees of freedom separately while the ``interaction'' relations \equref{parabparaf} mix the parabosonic with the parafermionic degrees of freedom according to the recipe proposed in \cite{GreeMe}. In all the above and in what follows, we use the notation $[x, y]\equiv xy-yx$ (i.e.: the ``commutator'') and the notation $\{x, y \}\equiv xy+yx$ (i.e.: the ``anticommutator''), for any $x,y \in P_{BF}$.

If we consider finite number of generators, for example the $2m$ parabosonic generators $b_{i}^{+}, b_{i}^{-}$ ($i = 1, 2, ..., m$) and the $2n$ parafermionic generators $f_{j}^{+}, f_{j}^{-}$ ($j = 1, 2, ..., n$) we will use the notation $P_{BF}^{(m,n)}$.

We will now review results presented and proved in \cite{KaDaHa, Ya1} regarding the braided, graded algebraic structure of the Relative Parabose Set $P_{BF}$ and the construction of Lie superalgebra realizations via the use of $P_{BF}$.  For the results presented in the rest of this section concerning the graded structure one can also see \cite{Ya1}, while for detailed computations and proofs of the results concerning the braided group structure, subalgebras and finally the paraparticle realizations one can see \cite{KaDaHa}.

\paragraph{$\bullet$ \textbf{On the $\mathbf{(\mathbb{Z}_{2} \times \mathbb{Z}_{2})}$-Graded structure of $P_{BF}$:}} The following proposition describes the $(\mathbb{Z}_{2} \times \mathbb{Z}_{2})$-graded structure of the Relative Parabose Set algebra $P_{BF}$ (For the proof see \cite{Ya1}):
%{\small
\begin{proposition} \prlabel{gradedstruct}
%[\textbf{$\mathbf{(\mathbb{Z}_{2} \times \mathbb{Z}_{2})}$-Graded structure of $P_{BF}$}:]
The relative parabose set $P_{BF}$ is the universal enveloping algebra UEA of a
$\theta$-colored ($\mathbb{Z}_{2} \times \mathbb{Z}_{2}$)-graded Lie algebra $L_{\mathbb{Z}_{2} \times \mathbb{Z}_{2}}$.
This implies that $P_{BF}$ is a ($\mathbb{Z}_{2} \times \mathbb{Z}_{2}$)-graded associative algebra
{\small
\begin{equation}
P_{BF} \cong \mathbb{U}(L_{\mathbb{Z}_{2} \times \mathbb{Z}_{2}})
\end{equation}
}
Its generators are homogeneous elements in the above gradation, with the paraboson generators $b_{k}^{+}$, $b_{l}^{-}$,
$k,l = 1, 2, ...$ spanning the $L_{10}$ subspace of $L_{\mathbb{Z}_{2} \times \mathbb{Z}_{2}}$, and the parafermion generators
$f_{\alpha}^{+}$, $f_{\beta}^{-}$, $\alpha, \beta = 1, 2, ...$ spanning the $L_{11}$ subspace of $L_{\mathbb{Z}_{2} \times \mathbb{Z}_{2}}$, thus
their grades are given as follows
{\small
\begin{equation} \eqlabel{gradPBF1}
\begin{array}{ccc}
deg(b_{k}^{\varepsilon}) = (1,0) & , & deg(f_{\alpha}^{\eta}) = (1,1)
\end{array}
\end{equation}
}
where $\varepsilon, \eta = \pm$. At the same time the polynomials $\{ b_{k}^{\epsilon}, b_{l}^{\eta} \}$ and $[ f_{\alpha}^{\epsilon}, f_{\beta}^{\eta} ]$ $\forall$ $k, l, \alpha, \beta = 1,2,... \ $ and $\forall$ $\epsilon, \eta = \pm$ span the subspace $L_{00}$ of $L_{\mathbb{Z}_{2} \times \mathbb{Z}_{2}}$,
and the polynomials $\{ f_{\alpha}^{\epsilon}, b_{k}^{\eta} \}$ $\forall$ $k, \alpha = 1,2,... \ $ and $\forall$ $\epsilon, \eta = \pm$ span the subspace $L_{01}$ of $L_{\mathbb{Z}_{2} \times \mathbb{Z}_{2}}$. Consequently their grades are given as follows
{\small
\begin{equation}  \eqlabel{gradPBF2}
\begin{array}{ccc}
deg(\{ b_{k}^{\epsilon}, b_{l}^{\eta} \}) = deg([ f_{\alpha}^{\epsilon}, f_{\beta}^{\eta} ]) = (0,0) & , &
deg(\{ f_{\alpha}^{\epsilon}, b_{k}^{\eta} \}) = (0,1)
\end{array}
\end{equation}
}
Finally the color function used in the above construction is given
{\small
\begin{equation} \eqlabel{colfunctRelParSe}
\begin{array}{c}
\theta:\big( \mathbb{Z}_{2} \times \mathbb{Z}_{2} \big) \times \big( \mathbb{Z}_{2} \times \mathbb{Z}_{2} \big) \rightarrow \mathbb{C}^{*} \\
\theta(a,b) = (-1)^{(a_{1}b_{1} + a_{2}b_{2})}
\end{array}
\end{equation}
}
for all $a = (a_{1}, a_{2}), b = (b_{1}, b_{2}) \in \mathbb{Z}_{2} \times \mathbb{Z}_{2}$, and the operations in the exponent are considered in the
$\mathbb{Z}_{2}$ ring.
\end{proposition}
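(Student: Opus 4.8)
The plan is to realise the color Lie algebra explicitly as a graded subspace of $P_{BF}$ and then identify $P_{BF}$ with its universal enveloping algebra. I would begin by checking that the $\theta$ of \equref{colfunctRelParSe} is a genuine commutation factor on $\mathbb{Z}_{2}\times\mathbb{Z}_{2}$: since the exponent $a_{1}b_{1}+a_{2}b_{2}$ is a symmetric $\mathbb{Z}_{2}$-bilinear form, bimultiplicativity $\theta(a+b,c)=\theta(a,c)\theta(b,c)$ and the symmetry $\theta(a,b)\theta(b,a)=\theta(a,b)^{2}=1$ are immediate. The crucial background fact is that for such a $\theta$ the \emph{color bracket} $\langle x,y\rangle:=xy-\theta(\deg x,\deg y)\,yx$ turns any $(\mathbb{Z}_{2}\times\mathbb{Z}_{2})$-graded associative algebra into a color Lie algebra; in particular $\theta$-antisymmetry and the $\theta$-graded Jacobi identity hold automatically, so the only genuine task in producing $L_{\mathbb{Z}_{2}\times\mathbb{Z}_{2}}$ is to locate a graded subspace of $P_{BF}$ that is closed under $\langle\cdot,\cdot\rangle$.

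Next I would assign the grades \equref{gradPBF1}, set $L_{10}=\mathrm{span}\{b_{k}^{\pm}\}$, $L_{11}=\mathrm{span}\{f_{\alpha}^{\pm}\}$, and take $L_{00},L_{01}$ to be the spans of the quadratic elements in \equref{gradPBF2}. With these grades the color bracket specialises exactly as required: $\theta((1,0),(1,0))=-1$ gives $\langle b,b\rangle=\{b,b\}$, $\theta((1,1),(1,1))=+1$ gives $\langle f,f\rangle=[f,f]$, and $\theta((1,0),(1,1))=-1$ gives $\langle b,f\rangle=\{b,f\}$, so $L_{00}$ and $L_{01}$ are precisely the images of the bracket on generators. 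It then remains to verify that $L=L_{00}\oplus L_{10}\oplus L_{01}\oplus L_{11}$ is bracket-closed. The brackets of a quadratic element with a generator are, after evaluating the relevant $\theta$-factor (which is $\pm1$), literally the left-hand sides of the trilinear relations \equref{parab-paraf}--\equref{parabparaf}; since each of those relations returns a single generator or $0$, every depth-three bracket stays inside $L$. The remaining closures, such as $\langle L_{01},L_{01}\rangle\subseteq L_{00}$, I would not compute directly but deduce from the $\theta$-Jacobi identity together with the closures just established, using that $L$ is generated as a color Lie algebra by $L_{10}\oplus L_{11}$.

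The final and deepest step is the isomorphism $P_{BF}\cong\mathbb{U}(L_{\mathbb{Z}_{2}\times\mathbb{Z}_{2}})$. By construction the inclusion $L\hookrightarrow P_{BF}$ satisfies the enveloping relations $xy-\theta(\deg x,\deg y)\,yx=\langle x,y\rangle$, so the universal property yields an algebra morphism $\Phi:\mathbb{U}(L)\to P_{BF}$, which is surjective because the generators $b_{k}^{\pm},f_{\alpha}^{\pm}$ already lie in $L$. Injectivity is where I expect the main obstacle: it is exactly a coloured Poincar\'e--Birkhoff--Witt statement, asserting that the trilinear relations of $P_{BF}$ force no collapse beyond the quadratic enveloping relations, i.e. that the ordered PBW monomials in the $b$'s and $f$'s remain linearly independent. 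Over $\mathbb{C}$ the generalized PBW theorem for color Lie algebras with a commutation factor furnishes a PBW basis of $\mathbb{U}(L)$, and I would match it against an independent basis of $P_{BF}$ --- most transparently by letting both act on a faithful module (the Fock-type representation of \seref{singleFockspace} and its many-degree-of-freedom analogue) and checking that the images of the PBW monomials are linearly independent there, or alternatively by a Bergman diamond-lemma computation verifying that every overlap ambiguity of the trilinear relations resolves. Establishing this linear independence is the technical heart of the argument and, once in hand, upgrades $\Phi$ to the asserted graded isomorphism.
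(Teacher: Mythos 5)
You should first know that the paper contains no proof of this proposition: it is quoted from \cite{Ya1} (``For the proof see \cite{Ya1}''), so the comparison below is against the standard argument that reference relies on. Most of your outline is sound and matches that route: $\theta$ is indeed a commutation factor, the color bracket $\langle x,y\rangle = xy-\theta(\deg x,\deg y)yx$ on any $(\mathbb{Z}_2\times\mathbb{Z}_2)$-graded associative algebra automatically satisfies $\theta$-antisymmetry and the $\theta$-Jacobi identity, your sign bookkeeping correctly matches the bracket on the four homogeneous components to the (anti)commutators appearing in \equref{parab-paraf}--\equref{parabparaf}, and deducing the quadratic--quadratic closures from Jacobi is legitimate. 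The genuine gap is your final step, the injectivity of $\Phi:\mathbb{U}(L)\rightarrow P_{BF}$. Your primary device --- letting both algebras act on a ``faithful module'' built from the Fock-like representations --- fails concretely: no single Fock-like representation is faithful. Indeed, by \equref{equat3} one has $f^{+}\cdot\phi_{m,n}=0$ for $n\geq p$, so the element $(f^{+})^{p+1}$ annihilates the entire order-$p$ carrier space, while it acts nontrivially in any Fock-like representation of order $p'\geq p+1$; hence $(f^{+})^{p+1}$ lies in the kernel of the order-$p$ representation but is nonzero in $P_{BF}^{(1,1)}$. Using the whole family over all $p$ would require proving $\bigcap_{p}\ker=0$, i.e.\ exactly the linear-independence statement you set out to establish, so this route is circular. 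Worse, the proposition concerns $P_{BF}$ with \emph{infinitely many} degrees of freedom, for which the paper's Discussion explicitly states that the construction of Fock-like representations is an open, ``highly non-trivial task'' --- the module you propose to act on does not exist in the literature. Your fallback, a Bergman diamond-lemma resolution of all overlap ambiguities of the $32$-odd trilinear relations, is viable in principle but is left entirely unexecuted, and it is where all the content of your argument would reside.

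The step you call the ``technical heart'' can in fact be bypassed, and this is the point your proposal misses. Because of the grade assignments \equref{gradPBF1}, each trilinear relation in \equref{parab-paraf}, \equref{parabparaf} is itself an iterated color bracket --- an element of the free $\theta$-color Lie algebra $F$ on the generators $b_{k}^{\pm}, f_{\alpha}^{\pm}$ (this is precisely your observation that the left-hand sides are $\langle\langle x,y\rangle,z\rangle$ up to the correct $\theta$-signs). So define $L_{\mathbb{Z}_2\times\mathbb{Z}_2}:=F/I$ with $I$ the color-Lie ideal generated by these relations, rather than as a subspace of $P_{BF}$. Then the isomorphism is formal: $\mathbb{U}(F)$ is the free associative algebra on the generators, and $\mathbb{U}(F/I)\cong\mathbb{U}(F)/\langle I\rangle$, which is by definition the associative algebra presented by the trilinear relations, i.e.\ $P_{BF}$. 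No PBW theorem and no linear-independence verification enter at all. The spanning claims of the proposition then follow because the relations reduce any bracket monomial of length $\geq 3$ to one of length $1$, so $L$ is spanned by the generators together with their pairwise brackets, with degrees as in \equref{gradPBF1}, \equref{gradPBF2}; note the statement asserts only \emph{spanning}, not linear independence, of these elements. Scheunert's color PBW theorem (which does hold here, since $\theta$ is a commutation factor and we work over $\mathbb{C}$) is needed only if one additionally wants $L$ to embed in $P_{BF}$ --- a claim the proposition does not make and your subspace-based definition of $L$ silently presupposes.
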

%}
%\begin{proof}
%For the proof see \cite{Ya1}.
%\end{proof}
Notice that in the above -and in what follows- we are employing the additive notation for the Klein group $\mathbb{Z}_{2} \times \mathbb{Z}_{2}$:
{\scriptsize
$$
\begin{array}{ccc}
\begin{array}{c}
\mathbf{\mathbb{Z}_{2} \times \mathbb{Z}_{2} \cong \mathbb{Z}_{2} \oplus \mathbb{Z}_{2}} \ \textbf{group}:   \\  \\
\begin{array}{|c||c|c|c|c|}
  \hline
       \textbf{+}         & \mathbf{(0,0)}  & \mathbf{(0,1)} & \mathbf{(1,0)}  &  \mathbf{(1,1)} \\ \hline \cline{1-5}
 \mathbf{(0,0)} & (0,0) & (0,1) & (1,0)   &   (1,1)            \\   \hline
 \mathbf{(0,1)} & (0,1) & (0,0) & (1,1)   &  (1,0) \\   \hline
 \mathbf{(1,0)} & (1,0) & (1,1) & (0,0)   &   (0,1)     \\   \hline
 \mathbf{(1,1)} & (1,1) & (1,0) & (0,1)   &   (0,0)      \\
\hline
\end{array}
\end{array}     &  \leftrightsquigarrow   &  %\begin{footnotesize}
                                             \begin{array}{c}
                                             \{(0,0), (0,1)\} \ is \ a \ subgroup \\
                                             of \ the \ \mathbb{Z}_{2} \times \mathbb{Z}_{2} \ group \\
                                             isomorphic \ to \ the \\
                                             \mathbf{\mathbb{Z}_{2} = \{0, 1\} \ \textbf{group}}:  \\   \\
                                             \begin{array}{|c||c|c|}
                                               \hline
                                               \textbf{+} & \mathbf{0} & \mathbf{1} \\   \hline \cline{1-3}
                                               \mathbf{0} & 0 & 1 \\   \hline
                                               \mathbf{1} & 1 & 0 \\
                                               \hline
                                             \end{array}
                                             \end{array}
                                             %\end{footnotesize}
\end{array}
$$
}

\paragraph{$\bullet$ \textbf{On some subalgebras of $P_{BF}$:}} The following proposition singles out a particularly useful subalgebra of $P_{BF}$ which will be utilized in the sequel:
\begin{proposition}     \prlabel{subalgPbf}
%[\textbf{On subalgebras of $P_{BF}$:}]
The linear subspace of the Relative Parabose set $P_{BF}$ (or of the ($\mathbb{Z}_{2} \times \mathbb{Z}_{2}, \theta$)-Lie algebra $L_{\mathbb{Z}_{2} \times \mathbb{Z}_{2}}$) spanned by the elements of the form $\{ b_{k}^{\epsilon}, b_{l}^{\eta} \}$, $[ f_{\alpha}^{\epsilon}, f_{\beta}^{\eta} ]$ and $\{ f_{\alpha}^{\epsilon}, b_{k}^{\eta} \}$ for all $k, l, \alpha, \beta = 1,2,... \ $ and for all $\epsilon, \eta = \pm$ is a $\mathbb{Z}_{2}$-graded Lie algebra (or equivalently a Lie superalgebra). The UEA of this Lie superalgebra is a subalgebra of $P_{BF}$.
\end{proposition}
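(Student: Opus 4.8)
The plan is to identify the displayed subspace as $W := L_{00} \oplus L_{01}$, the part of the color Lie algebra $L_{\mathbb{Z}_2 \times \mathbb{Z}_2}$ from \prref{gradedstruct} supported on the subgroup $H = \{(0,0),(0,1)\}$ of $\mathbb{Z}_2 \times \mathbb{Z}_2$, and to show that the $\theta$-colored structure restricts on $W$ to an ordinary Lie superalgebra structure. First I would check closure under the bracket. In a graded color Lie algebra one has $[L_a, L_b] \subseteq L_{a+b}$; since $H$ is a subgroup (see the group table following \prref{gradedstruct}), the color bracket of two $H$-homogeneous elements of $W$ lands again in $W$. Reading off the degrees in \equref{gradPBF2}, this amounts to the three inclusions $[L_{00},L_{00}] \subseteq L_{00}$, $[L_{00},L_{01}] \subseteq L_{01}$ and $[L_{01},L_{01}] \subseteq L_{00}$, so $W$ is a sub-color-Lie-algebra of $L_{\mathbb{Z}_2\times\mathbb{Z}_2}$.

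Next I would restrict the color function \equref{colfunctRelParSe} to $H \times H$. A direct evaluation gives $\theta(a,b) = 1$ in every case except $a = b = (0,1)$, where $\theta((0,1),(0,1)) = -1$. Under the isomorphism $H \cong \mathbb{Z}_2$ sending $(0,0)\mapsto 0$ and $(0,1)\mapsto 1$ (already indicated in the excerpt), this is exactly the super-commutation sign $(-1)^{|x||y|}$. Consequently the graded antisymmetry and graded Jacobi identity valid on $L_{\mathbb{Z}_2\times\mathbb{Z}_2}$ reduce, once both arguments are taken in $W$, to the usual super-antisymmetry and super-Jacobi identities. Hence $W$, regraded over $\mathbb{Z}_2$ with even part $L_{00}$ and odd part $L_{01}$, is a Lie superalgebra.

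For the enveloping-algebra assertion I would exploit the coincidence of defining relations together with a Poincar\'{e}--Birkhoff--Witt argument. Because $\theta$ agrees with the super sign on $H$, the composite inclusion $W \hookrightarrow L_{\mathbb{Z}_2\times\mathbb{Z}_2} \hookrightarrow P_{BF}$ carries $xy - (-1)^{|x||y|}yx$ to $[x,y]$ for all $x,y \in W$, and therefore extends to an algebra homomorphism from the super-enveloping algebra $\mathbb{U}(W)$ into $P_{BF}$ whose image is the subalgebra generated by $W$. To obtain injectivity I would fix an ordered homogeneous basis of $L_{\mathbb{Z}_2\times\mathbb{Z}_2}$ listing the $W$-basis vectors first; by the color-PBW theorem for $P_{BF} = \mathbb{U}(L_{\mathbb{Z}_2\times\mathbb{Z}_2})$ the associated ordered monomials are linearly independent, and those built only from $W$-basis vectors --- precisely the images of the super-PBW basis of $\mathbb{U}(W)$ --- form part of this basis. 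Thus $\mathbb{U}(W)$ maps isomorphically onto the subalgebra generated by $W$, which is the desired conclusion.

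I expect the enveloping-algebra step to be the only real obstacle: the closure and the Lie-superalgebra axioms are bookkeeping with the subgroup $H$ and with \equref{colfunctRelParSe}, whereas the UEA claim needs the PBW theorem in both the color and the super settings and a careful matching of the two bases to guarantee that no extra relations are introduced and that the embedding is genuinely injective.
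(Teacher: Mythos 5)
Your proposal is correct and follows essentially the same route as the paper: the paper's own discussion (the relations displayed in \equref{Z2subZ22} together with the subgroup $\{(0,0),(0,1)\}\cong\mathbb{Z}_{2}$ of the Klein group and the restriction of the commutation factor \equref{colfunctRelParSe}, under which $\theta$ reduces to the super sign $(-1)^{|x||y|}$) is exactly your closure-plus-restriction argument, the full proof being deferred to \cite{KaDaHa}. Your PBW-based injectivity argument for the enveloping-algebra claim is the standard expected justification and supplies detail the paper itself leaves to the cited reference.
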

\begin{proof}
For the proof see \cite{KaDaHa}.
\end{proof}
Let us see this last statement in a little more detail: Employing the commutation factor $\theta$ given in \equref{colfunctRelParSe} and the gradation described in \prref{gradedstruct} we get the following relations inside the relative parabose set
\begin{equation} \eqlabel{Z2subZ22}
\begin{array}{c}
L_{00} \ni \langle L_{00}, L_{00} \rangle =  [ L_{00}, L_{00} ]  \rightsquigarrow  \textrm{commutator} \\
    \\
L_{01} \ni \langle L_{00}, L_{01} \rangle =  [ L_{00}, L_{01} ]  \rightsquigarrow  \textrm{commutator}       \\
    \\
L_{00} \ni \langle L_{01}, L_{01} \rangle =  \{ L_{01}, L_{01} \}   \rightsquigarrow \textrm{anticommutator}      \\
\end{array}
\end{equation}
where $\langle .., .. \rangle : L \times L \rightarrow L$ is the non-associative multiplication of the ($\mathbb{Z}_{2} \times \mathbb{Z}_{2}, \theta$)-Lie algebra $L_{\mathbb{Z}_{2} \times \mathbb{Z}_{2}}$ described in \prref{gradedstruct}

Now we will proceed in computing the relations in this Lie superalgebra:
\begin{lemma}  \lelabel{commrel}
The non-associative multiplication $\langle .., .. \rangle: \mathbb{L} \times \mathbb{L} \rightarrow \mathbb{L}$ of the Lie superalgebra $\mathbb{L} = L_{00} \oplus L_{01}$ described in \prref{subalgPbf} is determined by the values of the commutators $[ .., .. ]$ and the anticommutators $\{ .., .. \}$ in $P_{BF}$. Consequently we have the following relations (inside $\mathbb{U}(L_{00} \oplus L_{01})$):
\begin{equation} \eqlabel{liesuperalgmultipl1}
\begin{array}{c}
\mathbf{\big\langle \{ b_{i}^{\xi}, b_{j}^{\eta} \}, \{ b_{k}^{\epsilon}, b_{l}^{\phi} \} \big\rangle} \equiv  \\
\equiv \mathbf{\big[ \{ b_{i}^{\xi}, b_{j}^{\eta} \}, \{ b_{k}^{\epsilon}, b_{l}^{\phi} \} \big]} = (\epsilon - \eta) \delta_{jk} \{ b_{i}^{\xi}, b_{l}^{\phi} \} + (\epsilon - \xi) \delta_{ik} \{ b_{j}^{\eta}, b_{l}^{\phi} \} + \\
+ (\phi - \eta) \delta_{jl} \{ b_{i}^{\xi}, b_{k}^{\epsilon} \} + (\phi - \xi) \delta_{il} \{ b_{j}^{\eta}, b_{k}^{\epsilon} \}
\end{array}
\end{equation}
\begin{equation} \eqlabel{liesuperalgmultipl2}
\begin{array}{c}
\mathbf{\big\langle \{ b_{i}^{\xi}, b_{j}^{\eta} \}, [ f_{k}^{\epsilon}, f_{l}^{\phi} ] \big\rangle} \equiv \mathbf{\big[ \{ b_{i}^{\xi}, b_{j}^{\eta} \}, [ f_{k}^{\epsilon}, f_{l}^{\phi} ] \big]} = 0
\end{array}
\end{equation}
\begin{equation} \eqlabel{liesuperalgmultipl3}
\begin{array}{c}
\mathbf{\big\langle \{ b_{i}^{\xi}, b_{j}^{\eta} \}, \{ f_{k}^{\epsilon}, b_{l}^{\phi} \} \big\rangle} \equiv \\
\equiv \mathbf{\big[ \{ b_{i}^{\xi}, b_{j}^{\eta} \}, \{ f_{k}^{\epsilon}, b_{l}^{\phi} \} \big]} = (\phi - \eta) \delta_{jl} \{ f_{k}^{\epsilon}, b_{i}^{\xi} \} + (\phi - \xi) \delta_{il} \{ f_{k}^{\epsilon}, b_{j}^{\eta} \}
\end{array}
\end{equation}
\begin{equation} \eqlabel{liesuperalgmultipl4}
\begin{array}{c}
\mathbf{\big\langle [ f_{i}^{\xi}, f_{j}^{\eta} ], [ f_{k}^{\epsilon}, f_{l}^{\phi} ] \big\rangle} \equiv \\
\equiv \mathbf{\big[ [ f_{i}^{\xi}, f_{j}^{\eta} ], [ f_{k}^{\epsilon}, f_{l}^{\phi} ] \big]} = \frac{1}{2}(\phi - \eta)^{2}\delta_{jl} [ f_{k}^{\epsilon}, f_{i}^{\xi} ] + \frac{1}{2}(\phi - \xi)^{2}\delta_{il} [ f_{j}^{\eta}, f_{k}^{\epsilon} ] +  \\
+ \frac{1}{2}(\epsilon - \eta)^{2}\delta_{jk} [ f_{i}^{\xi}, f_{l}^{\phi} ] + \frac{1}{2}(\epsilon - \xi)^{2}\delta_{ik} [ f_{l}^{\phi}, f_{j}^{\eta} ]
\end{array}
\end{equation}
\begin{equation} \eqlabel{liesuperalgmultipl5}
\begin{array}{c}
\mathbf{\big\langle [ f_{i}^{\xi}, f_{j}^{\eta} ], \{ f_{k}^{\epsilon}, b_{l}^{\phi} \} \big\rangle} \equiv \\
\equiv \mathbf{\big[ [ f_{i}^{\xi}, f_{j}^{\eta} ], \{ f_{k}^{\epsilon}, b_{l}^{\phi} \} \big]} = \frac{1}{2} (\epsilon - \eta)^{2} \delta_{jk} \{ f_{i}^{\xi}, b_{l}^{\phi} \} - \frac{1}{2} (\epsilon - \xi)^{2} \delta_{ik} \{ f_{j}^{\eta}, b_{l}^{\phi} \}
\end{array}
\end{equation}
\begin{equation} \eqlabel{liesuperalgmultipl6}
\begin{array}{c}
\mathbf{\big\langle \{ f_{k}^{\xi}, b_{l}^{\eta} \}, \{ f_{i}^{\epsilon}, b_{j}^{\phi} \} \big\rangle} \equiv  \\
\equiv \mathbf{\big\{ \{ f_{k}^{\xi}, b_{l}^{\eta} \}, \{ f_{i}^{\epsilon}, b_{j}^{\phi} \} \big\}} = (\phi - \eta) \delta_{jl} [ f_{k}^{\xi}, f_{i}^{\epsilon} ] + \frac{1}{2}(\epsilon - \xi)^{2} \delta_{ik} \big\{ b_{l}^{\eta}, b_{j}^{\phi} \big\}
\end{array}
\end{equation}
for all values $i,j,k,l = 1, 2, ... $ and $\xi, \eta, \epsilon, \phi = \pm$.
\end{lemma}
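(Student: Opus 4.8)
The plan is to reduce every one of the six super-brackets to the elementary trilinear relations \equref{parab-paraf}--\equref{parabparaf} by exploiting the associative (anti)derivation structure of $P_{BF}$. First I would record, as the starting point, that the choice of commutator versus anticommutator in each line is forced by the color function $\theta$ of \equref{colfunctRelParSe} evaluated on the grades of \prref{gradedstruct}, exactly as tabulated in \equref{Z2subZ22}: the bracket of two even elements, or of an even with an odd element, is the ordinary commutator, while the bracket of two odd elements is the anticommutator.

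For the five relations \equref{liesuperalgmultipl1}--\equref{liesuperalgmultipl5}, whose first argument $A$ is even (of type $\{b,b\}$ or $[f,f]$), I would use that $\mathrm{ad}_{A}=[A,-]$ is a derivation of the associative algebra $P_{BF}$. Applying the Leibniz rule to the quadratic second argument yields the two identities
\[
[A,\{P,Q\}]=\{[A,P],Q\}+\{P,[A,Q]\},\qquad [A,[P,Q]]=[[A,P],Q]+[P,[A,Q]],
\]
which reduce each super-bracket to brackets of $A$ with \emph{single} generators. These latter brackets are supplied verbatim by \equref{parab-paraf}--\equref{parabparaf}: namely $[\{b_i^\xi,b_j^\eta\},b_m^\epsilon]=(\epsilon-\eta)\delta_{jm}b_i^\xi+(\epsilon-\xi)\delta_{im}b_j^\eta$ and $[\{b_i^\xi,b_j^\eta\},f_m^\epsilon]=0$ for the $\{b,b\}$-cases, together with $[[f_i^\xi,f_j^\eta],f_m^\epsilon]=\tfrac12(\epsilon-\eta)^2\delta_{jm}f_i^\xi-\tfrac12(\epsilon-\xi)^2\delta_{im}f_j^\eta$ and $[[f_i^\xi,f_j^\eta],b_m^\epsilon]=0$ for the $[f,f]$-cases. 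Substituting these values and collecting terms delivers \equref{liesuperalgmultipl1}--\equref{liesuperalgmultipl5} directly; relation \equref{liesuperalgmultipl2} and the appropriate halves of \equref{liesuperalgmultipl3}, \equref{liesuperalgmultipl5} are immediate because one of the single-generator brackets vanishes.

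The remaining relation \equref{liesuperalgmultipl6} is the one genuinely different case, since both arguments are odd and no single derivation identity is available. Here I would write $X=\{f_k^\xi,b_l^\eta\}$, expand $Y=\{f_i^\epsilon,b_j^\phi\}=f_i^\epsilon b_j^\phi+b_j^\phi f_i^\epsilon$, and compute $\{X,Y\}=XY+YX$ by moving $X$ across the single generators $f_i^\epsilon,b_j^\phi$ one at a time, using only the two elementary relations extracted from \equref{parabparaf},
\[
[X,b_m^\epsilon]=(\epsilon-\eta)\delta_{lm}f_k^\xi,\qquad \{X,f_m^\epsilon\}=\tfrac12(\epsilon-\xi)^2\delta_{km}b_l^\eta .
\]

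The main obstacle lies precisely in this last computation: after commuting or anticommuting $X$ through each generator, one is left with a collection of terms still carrying a trailing factor of $X$, and the whole point is that these cancel in pairs, so that the answer lands back inside $L_{00}\oplus L_{01}$. I expect verifying this cancellation, and then checking that the surviving terms reorganise as $(\phi-\eta)\delta_{jl}[f_k^\xi,f_i^\epsilon]+\tfrac12(\epsilon-\xi)^2\delta_{ik}\{b_l^\eta,b_j^\phi\}$, to be the crux of the proof; by contrast the even-argument cases \equref{liesuperalgmultipl1}--\equref{liesuperalgmultipl5} are routine once the Leibniz identities above are in hand.
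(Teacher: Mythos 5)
Your proposal is correct and amounts to exactly the computation the paper intends: the lemma is stated here without an in-text proof (the verifications are deferred to \cite{KaDaHa}), and reducing each bracket to the defining trilinear relations \equref{parab-paraf}--\equref{parabparaf} via the Leibniz identities for $\mathrm{ad}_{A}$, with the bracket types dictated by the color function as in \equref{Z2subZ22}, is precisely the required argument. One simplification: the odd--odd case \equref{liesuperalgmultipl6} is more routine than you anticipate, since the mixed identities $\{X,PQ\}=\{X,P\}Q-P[X,Q]$ and $\{X,PQ\}=[X,P]Q+P\{X,Q\}$ (valid in any associative algebra) absorb the trailing-$X$ terms automatically, and applying them to the two summands of $\{f_{i}^{\epsilon},b_{j}^{\phi}\}$ yields $\frac{1}{2}(\epsilon-\xi)^{2}\delta_{ik}\{b_{l}^{\eta},b_{j}^{\phi}\}+(\phi-\eta)\delta_{jl}[f_{k}^{\xi},f_{i}^{\epsilon}]$ in two lines, making this case no harder than \equref{liesuperalgmultipl1}--\equref{liesuperalgmultipl5}.
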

It is interesting to note the following thing at this point: In the case of finite degrees of freedom ($m$ parabosonic degrees of freedom and $n$ parafermionic degrees of freedom, i.e. the parabosonic and parafermionic indices ranging in $1, 2, ..., m$ and $1, 2, ..., n$ respectively) the commutation relations \equref{liesuperalgmultipl1} are exactly the commutation relations of the Lie algebra $\mathfrak{sp}(2m)$ which is exactly the even part of the Lie superalgebra $\mathfrak{osp}(1/2m) \equiv B(0,m)$ \cite{Pal3}. On the other hand, the commutation relations \equref{liesuperalgmultipl4} are exactly the commutation relations of the Lie algebra $\mathfrak{so}(2n)$ \cite{Kata,RySu}. Finally taking into account \equref{liesuperalgmultipl2} as well, we arrive at the conclusion that the even part $L_{00}$ of the Lie superalgebra $\mathbb{L} = L_{00} \oplus L_{01}$ described in \prref{subalgPbf} and \leref{commrel} is isomorphic (as a Lie algebra) to $\mathfrak{sp}(2m) \oplus \mathfrak{so}(2n)$.

Finally, the following lemma singles out one more useful subalgebra of the Relative Parabose Set algebra $P_{BF}$:
\begin{lemma} \lelabel{generalinearsuperalg}
The Lie superalgebra $\mathfrak{gl}(m/n)$ is a subalgebra of the LS described in \prref{subalgPbf} and \leref{commrel}
\end{lemma}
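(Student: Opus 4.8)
The plan is to exhibit an explicit injective homomorphism of Lie superalgebras from $\mathfrak{gl}(m/n)$ into $\mathbb{L}=L_{00}\oplus L_{01}$, working with the finite system of $m$ parabosonic and $n$ parafermionic degrees of freedom. I would realize the standard matrix units $E_{ab}$, $1\le a,b\le m+n$, of $\mathfrak{gl}(m/n)$ (index $a$ declared even for $a\le m$ and odd for $a>m$) by the following paraparticle bilinears: for $j,l\in\{1,\dots,m\}$, $E_{jl}\mapsto \tfrac12\{b_j^+,b_l^-\}$; for $k,i\in\{1,\dots,n\}$, $E_{m+k,m+i}\mapsto \tfrac12[f_k^+,f_i^-]$; and for the odd off-diagonal blocks, $E_{m+k,l}\mapsto \tfrac12\{f_k^+,b_l^-\}$ and $E_{j,m+i}\mapsto \tfrac12\{f_i^-,b_j^+\}$. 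The images of the diagonal blocks lie in $L_{00}$ and those of the off-diagonal blocks in $L_{01}$, so the assignment automatically respects the $\mathbb{Z}_2$-grading; it then remains to check that it intertwines the super-bracket of $\mathfrak{gl}(m/n)$ with the multiplication $\langle\cdot,\cdot\rangle$ of $\mathbb{L}$, and that it is injective.

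I would first settle the even part. Evaluating \equref{liesuperalgmultipl1} at $\xi=\epsilon=+$, $\eta=\phi=-$ gives $[\tfrac12\{b_i^+,b_j^-\},\tfrac12\{b_k^+,b_l^-\}]=\delta_{jk}\,\tfrac12\{b_i^+,b_l^-\}-\delta_{il}\,\tfrac12\{b_k^+,b_j^-\}$, which is precisely the $\mathfrak{gl}(m)$ relation $[E_{ij},E_{kl}]=\delta_{jk}E_{il}-\delta_{il}E_{kj}$; the same sign choice in \equref{liesuperalgmultipl4} yields the corresponding $\mathfrak{gl}(n)$ relation for the elements $\tfrac12[f_k^+,f_i^-]$. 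Relation \equref{liesuperalgmultipl2} shows the two families commute, so the even images generate $\mathfrak{gl}(m)\oplus\mathfrak{gl}(n)$, the even part of $\mathfrak{gl}(m/n)$.

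The heart of the matter is the odd sector, controlled by \equref{liesuperalgmultipl6}, which carries exactly two terms. For a $\beta$-type and a $\gamma$-type generator, with the sign data $\{f_k^+,b_l^-\}$ and $\{f_i^-,b_j^+\}$, both prefactors $(\phi-\eta)$ and $\tfrac12(\epsilon-\xi)^2$ are nonzero, and one finds $\tfrac14\{\{f_k^+,b_l^-\},\{f_i^-,b_j^+\}\}=\delta_{jl}\,\tfrac12[f_k^+,f_i^-]+\delta_{ik}\,\tfrac12\{b_j^+,b_l^-\}$, i.e. $\{E_{m+k,l},E_{j,m+i}\}=\delta_{jl}E_{m+k,m+i}+\delta_{ik}E_{jl}$, exactly the $\mathfrak{gl}(m/n)$ anticommutator of two odd matrix units. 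For two odd elements of the same type (two $\{f^+,b^-\}$'s, respectively two $\{f^-,b^+\}$'s), the sign data force both prefactors in \equref{liesuperalgmultipl6} to vanish, matching $E_{m+k,l}E_{m+k',l'}=0$ in $\mathfrak{gl}(m/n)$. The mixed even--odd brackets are read off from \equref{liesuperalgmultipl3} and \equref{liesuperalgmultipl5}; a representative check gives $[\tfrac12\{b_a^+,b_b^-\},\tfrac12\{f_k^+,b_l^-\}]=-\delta_{al}\,\tfrac12\{f_k^+,b_b^-\}$, agreeing with $[E_{ab},E_{m+k,l}]=-\delta_{al}E_{m+k,b}$. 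Since the chosen bilinears are linearly independent in $P_{BF}$ and number $m^2+n^2+2mn=(m+n)^2=\dim\mathfrak{gl}(m/n)$, the homomorphism is injective and its image is the claimed subalgebra.

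The individual computations are mechanical; the obstacle I anticipate is purely bookkeeping. One must fix the convention $\pm\mapsto\pm1$, and then verify that the single uniform normalization factor $\tfrac12$ on all four families, together with the chosen ordering of the two odd types $\{f^+,b^-\}$ and $\{f^-,b^+\}$, simultaneously produces structure constants equal to $\pm1$ in the even, the odd--odd, and the even--odd relations. That this one normalization works across all of \equref{liesuperalgmultipl1}--\equref{liesuperalgmultipl6} at once is the only genuinely delicate point, so I would verify it relation-by-relation rather than trust a scaling argument.
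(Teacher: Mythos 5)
Your proposal is correct and follows essentially the same route as the paper: both take the subspace of $\mathbb{L}=L_{00}\oplus L_{01}$ spanned by the bilinears $\{b_i^+,b_j^-\}$, $[f_k^+,f_l^-]$, $\{f_k^+,b_l^-\}$, $\{f_i^-,b_j^+\}$ (with restricted index ranges) and read off the $\mathfrak{gl}(m/n)$ structure constants by specializing the sign data in \equref{liesuperalgmultipl1}--\equref{liesuperalgmultipl6} of \leref{commrel}, which is exactly how the paper derives its displayed relations \equref{genlinsuperalg1}--\equref{genlinsuperalg9}. Your explicit matrix-unit dictionary $E_{ab}\mapsto\tfrac12(\cdot)$ and the dimension-count remark for injectivity merely make explicit the normalization the paper leaves implicit (its relations carry the corresponding factors of $2$), so the two arguments coincide in substance.
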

\begin{proof}
We just have to consider the subspace of the LS $\mathbb{L} = L_{00} \oplus L_{01}$ (previously described), which is spanned by the elements $\{ b_{i}^{+}, b_{j}^{-} \}$, $[ f_{k}^{+}, f_{l}^{-} ]$, $\{ b_{i}^{+}, f_{k}^{-} \}$, $\{ f_{l}^{+}, b_{j}^{-} \}$ with the parabosonic indices taking the values $1, 2, ..., m$ and the parafermionic indices taking the values $1, 2, ..., n$. The rest is a straightforward consequence of equations \equref{liesuperalgmultipl1} - \equref{liesuperalgmultipl6} of \leref{commrel}.

For clarity we reproduce here the relations
\begin{equation} \eqlabel{genlinsuperalg1}
\big[ \{ b_{i}^{+}, b_{j}^{-} \}, \{ b_{k}^{+}, b_{l}^{-} \} \big] = 2 \delta_{jk} \{ b_{i}^{+}, b_{l}^{-} \} - 2 \delta_{il} \{ b_{k}^{+}, b_{j}^{-} \}
\end{equation}
\begin{equation}  \eqlabel{genlinsuperalg2}
\big[ \{ b_{i}^{+}, b_{j}^{-} \}, [ f_{k}^{+}, f_{l}^{-} ] \big] = 0
\end{equation}
\begin{equation}   \eqlabel{genlinsuperalg3}
\big[ [ f_{i}^{+}, f_{j}^{-} ], [ f_{k}^{+}, f_{l}^{-} ] \big] = 2 \delta_{jk} [ f_{i}^{+}, f_{l}^{-} ] - 2 \delta_{il} [ f_{k}^{+}, f_{j}^{-} ]
\end{equation}
\begin{equation}  \eqlabel{genlinsuperalg4}
\big\{ \{ f_{k}^{+}, b_{l}^{-} \}, \{ f_{i}^{+}, b_{j}^{-} \} \big\} = \big\{ \{ f_{k}^{-}, b_{l}^{+} \}, \{ f_{i}^{-}, b_{j}^{+} \} \big\} = 0
\end{equation}
\begin{equation}  \eqlabel{genlinsuperalg5}
\big\{ \{ f_{k}^{+}, b_{l}^{-} \}, \{ f_{i}^{-}, b_{j}^{+} \} \big\} = 2 \delta_{jl} [f_{k}^{+}, f_{i}^{-}] + 2 \delta_{ik} \{ b_{j}^{+}, b_{l}^{-} \}
\end{equation}
\begin{equation}  \eqlabel{genlinsuperalg6}
\big[ \{ b_{i}^{+}, b_{j}^{-} \}, \{ f_{k}^{+}, b_{l}^{-} \} \big] = -2 \delta_{il} \{ f_{k}^{+}, b_{j}^{-} \}
\end{equation}
\begin{equation} \eqlabel{genlinsuperalg7}
\big[ \{ b_{i}^{+}, b_{j}^{-} \}, \{ f_{k}^{-}, b_{l}^{+} \} \big] = 2 \delta_{jl} \{ f_{k}^{-}, b_{i}^{+} \}
\end{equation}
\begin{equation}  \eqlabel{genlinsuperalg8}
\big[ [ f_{i}^{+}, f_{j}^{-} ], \{ f_{k}^{+}, b_{l}^{-} \} \big] = 2 \delta_{jk} \{ f_{i}^{+}, b_{l}^{-} \}
\end{equation}
\begin{equation}  \eqlabel{genlinsuperalg9}
\big[ [ f_{i}^{+}, f_{j}^{-} ], \{ f_{k}^{-}, b_{l}^{+} \} \big] = -2 \delta_{ik} \{ f_{j}^{-}, b_{l}^{+} \}
\end{equation}
Relations \equref{genlinsuperalg1} are the commutation relations of the Lie algebra $\mathfrak{gl}(m)$ while \equref{genlinsuperalg3} are those of $\mathfrak{gl}(n)$. Relations \equref{genlinsuperalg1}, \equref{genlinsuperalg2}, \equref{genlinsuperalg3} determine $\mathfrak{gl}(m) \oplus \mathfrak{gl}(n)$ which is the even part of the Lie superalgebra $\mathfrak{gl}(m/n)$, thus a subalgebra of $L_{00} \cong \mathfrak{sp}(2m) \oplus \mathfrak{so}(2n)$. Finally all the relations \equref{genlinsuperalg1} - \equref{genlinsuperalg9} are exactly the commutation-anticommutation relations of the $(m+n)^{2}$-dimensional Lie superalgebra $\mathfrak{gl}(m/n)$ (see also \cite{Scheu3}).
\end{proof}

\paragraph{$\bullet$ \textbf{On Paraparticle Realizations of Lie superalgebras via $P_{BF}$:}}
Let $L=L_{0} \oplus L_{1}$ be any complex Lie superalgebra of either finite or infinite dimension and let $V = V_{0} \oplus V_{1}$ be a finite
dimensional, complex, super-vector space i.e. $dim_{\mathbb{C}}V_{0} = m$ and $dim_{\mathbb{C}}V_{1} = n$. If $V$ is the carrier space for a
super-representation (or: a $\mathbb{Z}_{2}$-graded representation) of $L$, this is equivalent to the existence of an homomorphism
$P: \mathbb{U}(L) \rightarrow \mathcal{E}nd_{gr}(V)$ of associative superalgebras, from $\mathbb{U}(L)$ to the algebra $\mathcal{E}nd_{gr}(V)$ of $\mathbb{Z}_{2}$-graded
linear maps on $V$.
For any homogeneous element $z \in L$ the image $P(z)$ will be a  $(m+n)\times (m+n)$ matrix of the form
\begin{equation}
\begin{array}{ccc}
 P(z)= \left(\begin{array}{c|c}
              A(z) & B(z) \\ \hline
              C(z) & D(z)
             \end{array}\right)    &  \begin{array}{c}
                                      \nearrow  \\ \\
                                      \searrow
                                      \end{array}   & \begin{array}{c}
                                                      P(X)=
                                                            \left(\begin{array}{ccc}
                                                            A(X) & 0 \\
                                                            0 & D(X)
                                                            \end{array}\right)
                                                            \ \ (if \ z = X \rightsquigarrow \underline{even})
                                                            \\  \\  \\
                                                      P(Y) =
                                                            \left(\begin{array}{ccc}
                                                            0 & B(Y) \\
                                                            C(Y) & 0
                                                            \end{array}\right)
                                                           \ \ (if \ z = Y \rightsquigarrow \underline{odd})
                                                      \end{array}
\end{array}
\end{equation}
where the complex submatrices $A_{m\times m}$, $B_{m\times n}$, $C_{n\times m}$, $D_{n\times n}$,  of $P_{(m+n)\times (m+n)}$ constitute the partitioning ($\mathbb{Z}_{2}$-grading) of the representation.
\begin{proposition}     \prlabel{parapartrealiz}
%[\textbf{Paraparticle Realizations of Lie superalgebras via $P_{BF}$:}]
$\mathbf{(a).}$ The linear map $J_{P_{BF}}: L \rightarrow P_{BF}$ defined by
\begin{equation} \eqlabel{JordgenPBFeqeven}
X_{i} \mapsto J_{P_{BF}}(X_{i}) = \frac{1}{2} \sum_{k,l=1}^{m}A_{kl}(X_{i}) \{ b_{k}^{+}, b_{l}^{-} \} + \frac{1}{2} \sum_{\alpha,\beta=1}^{n}D_{\alpha\beta}(X_{i}) [ f_{\alpha}^{+}, f_{\beta}^{-} ] \\
\end{equation}
for any even element ($Z=X_{i}$) of an homogeneous basis of $L$ and by
\begin{equation} \eqlabel{JordgenPBFeqodd}
Y_{j} \mapsto J_{P_{BF}}(Y_{j}) = \frac{1}{2} \sum_{k=1}^{m}\sum_{\alpha=1}^{n} \Big( B_{k,\alpha}(Y_{j}) \{ b_{k}^{+}, f_{\alpha}^{-} \}
+ C_{\alpha,k}(Y_{j}) \{ f_{\alpha}^{+}, b_{k}^{-} \} \Big) \\
\end{equation}
for any odd element ($Z=Y_{j}$) of an homogeneous basis of $L$, can be extended to an homomorphism of associative algebras
\begin{equation}
J_{P_{BF}}: \mathbb{U}(L) \rightarrow \mathbb{U} \big( \mathfrak{gl}(m/n) \big) \subset \mathbb{U}(\mathbb{L}) \equiv \mathbb{U}(L_{00} \oplus L_{01}) \subset P_{BF}
\end{equation}
between the universal enveloping algebra $\mathbb{U}(L)$ of the Lie superalgebra $L$ and the subalgebra $\mathbb{U} \big( \mathfrak{gl}(m/n) \big)$ of the  relative parabose set $P_{BF}$, in other words it constitutes a \textbf{realization of $L$ with paraparticles}.

$\mathbf{(b).}$ Furthermore, the above constructed homomorphism of associative algebras $J_{P_{BF}}: \mathbb{U}(L) \rightarrow P_{BF}$, is an homomorphism of super-Hopf algebras (equivalently an homomorphism of $\mathbb{Z}_{2}$-graded Hopf algebras) between $\mathbb{U}(L)$ and the $\mathbb{U} \big( \mathfrak{gl}(m/n) \big)$ $\mathbb{Z}_{2}$-graded subalgebra of $P_{BF}$.
\end{proposition}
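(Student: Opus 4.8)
The plan is to recognize $J_{P_{BF}}$ as the enveloping-algebra lift of a composite of two Lie superalgebra homomorphisms and then to invoke the universal property of $\mathbb{U}(L)$. First I would observe that a $\mathbb{Z}_{2}$-graded representation of $L$ on $V=V_{0}\oplus V_{1}$ is nothing but a homomorphism of Lie superalgebras $P|_{L}:L\to\mathfrak{gl}(V)\cong\mathfrak{gl}(m/n)$, sending an even basis vector $X_{i}$ to the block-diagonal matrix $\mathrm{diag}(A(X_{i}),D(X_{i}))$ and an odd basis vector $Y_{j}$ to the block-antidiagonal matrix with blocks $B(Y_{j}),C(Y_{j})$. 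On the other side, \leref{generalinearsuperalg} already exhibits a copy of $\mathfrak{gl}(m/n)$ inside $P_{BF}$; reading the normalisations off relations \equref{genlinsuperalg1}, \equref{genlinsuperalg3}, \equref{genlinsuperalg5}, the associated Lie superalgebra embedding $\phi:\mathfrak{gl}(m/n)\hookrightarrow\mathbb{L}=L_{00}\oplus L_{01}\subset P_{BF}$ is the one sending the matrix units to the paraparticle bilinears $e_{kl}\mapsto\tfrac12\{b_{k}^{+},b_{l}^{-}\}$, $e_{\alpha\beta}\mapsto\tfrac12[f_{\alpha}^{+},f_{\beta}^{-}]$, $e_{k\alpha}\mapsto\tfrac12\{b_{k}^{+},f_{\alpha}^{-}\}$, $e_{\alpha k}\mapsto\tfrac12\{f_{\alpha}^{+},b_{k}^{-}\}$, the uniform factor $\tfrac12$ being exactly what turns the coefficients $2$ in \leref{commrel} into the standard $\mathfrak{gl}$ structure constants.

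Next I would expand $\phi\circ P|_{L}$ on the chosen homogeneous basis. For an even $X_{i}$ this produces $\phi\big(\sum_{kl}A_{kl}(X_{i})e_{kl}+\sum_{\alpha\beta}D_{\alpha\beta}(X_{i})e_{\alpha\beta}\big)$, which is verbatim the right-hand side of \equref{JordgenPBFeqeven}; for an odd $Y_{j}$ one likewise recovers \equref{JordgenPBFeqodd}. Hence $J_{P_{BF}}=\phi\circ P|_{L}$ on generators, and since both factors are homomorphisms of Lie superalgebras, so is $J_{P_{BF}}:L\to\mathbb{L}\subset P_{BF}$ into the associative superalgebra $P_{BF}$ with its supercommutator. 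The universal property of $\mathbb{U}(L)$ then extends it uniquely to an associative-algebra homomorphism $\mathbb{U}(L)\to\mathbb{U}(\mathfrak{gl}(m/n))\subset P_{BF}$, which is assertion $(a)$. Concretely, the bracket-preservation the universal property demands, $J_{P_{BF}}(\langle z,w\rangle_{L})=\langle J_{P_{BF}}(z),J_{P_{BF}}(w)\rangle_{P_{BF}}$ for homogeneous $z,w$, is precisely where \leref{commrel} does the work: the six families \equref{liesuperalgmultipl1}--\equref{liesuperalgmultipl6} are, after the normalisation above, the $\mathfrak{gl}(m/n)$ superbrackets, to be matched against $P(\langle z,w\rangle_{L})=\langle P(z),P(w)\rangle$.

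For part $(b)$ the plan is to reduce the Hopf compatibilities to the behaviour on generators. By \prref{gradedstruct}, $P_{BF}=\mathbb{U}(L_{\mathbb{Z}_{2}\times\mathbb{Z}_{2}})$ and the bilinears $\{b_{k}^{\epsilon},b_{l}^{\eta}\}$, $[f_{\alpha}^{\epsilon},f_{\beta}^{\eta}]$, $\{f_{\alpha}^{\epsilon},b_{k}^{\eta}\}$ span the subspaces $L_{00}$ and $L_{01}$ of the colored Lie algebra; being elements of $L_{\mathbb{Z}_{2}\times\mathbb{Z}_{2}}$, they are primitive in $P_{BF}$ (a short direct computation confirms this, the cross terms of $\Delta$ cancelling through the matching of the color sign $\theta$ of \equref{colfunctRelParSe} with the chosen commutator or anticommutator). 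Consequently each generator $J_{P_{BF}}(z)$, $z\in L$, is primitive. Restricting the $(\mathbb{Z}_{2}\times\mathbb{Z}_{2})$-grading to the subgroup $\{(0,0),(0,1)\}\cong\mathbb{Z}_{2}$ collapses $\theta$ to the ordinary super sign, so the super-Hopf structure that $\mathbb{U}(\mathfrak{gl}(m/n))\subset P_{BF}$ inherits is the standard one on the enveloping algebra of the Lie superalgebra $\mathbb{L}$. Since $J_{P_{BF}}$ carries the primitive, $\mathbb{Z}_{2}$-homogeneous generators of $\mathbb{U}(L)$ to primitive elements of the same parity, it intertwines $\Delta,\varepsilon,S$ on generators, explicitly $\Delta J(z)=J(z)\otimes 1+1\otimes J(z)=(J\otimes J)\Delta(z)$, $\varepsilon(J(z))=0=\varepsilon(z)$ and $S(J(z))=-J(z)=J(S(z))$; as $J_{P_{BF}}$ is already an algebra map while $\Delta,\varepsilon,S$ are (anti)multiplicative, these identities propagate to all of $\mathbb{U}(L)$.

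The main obstacle is the bookkeeping concealed in the claim that $J_{P_{BF}}=\phi\circ P|_{L}$ is a homomorphism of Lie superalgebras: one must check that the single normalisation $\tfrac12$ is simultaneously correct for all three bracket types (even--even, even--odd, odd--odd), so that \equref{liesuperalgmultipl1}--\equref{liesuperalgmultipl6} reproduce the $\mathfrak{gl}(m/n)$ relations with no residual scalar, and one must track the super-signs carefully where the odd--odd bracket of $L$ becomes an anticommutator in $P_{BF}$. Everything else, namely the extension to $\mathbb{U}(L)$ and the Hopf compatibilities, is formal once the generator-level bracket and coproduct identities are secured.
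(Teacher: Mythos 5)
Your proposal is correct, and its skeleton is the same as the paper's: check that $J_{P_{BF}}$ preserves the three kinds of superbrackets on a homogeneous basis, extend by the universal property of $\mathbb{U}(L)$, and for part (b) verify the compatibilities $\underline{\Delta} \circ J_{P_{BF}} = (J_{P_{BF}} \otimes J_{P_{BF}}) \circ \Delta_{L}$, $\underline{\varepsilon} \circ J_{P_{BF}} = \varepsilon_{L}$, $\underline{S} \circ J_{P_{BF}} = J_{P_{BF}} \circ S_{L}$ at the level of generators. Where you genuinely differ is in how the generator-level checks are discharged: the paper states the three bracket-preservation identities and the Hopf compatibilities and outsources their direct verification to the computations of \cite{KaDaHa}, whereas you factor $J_{P_{BF}} = \phi \circ P|_{L}$ through the copy of $\mathfrak{gl}(m/n)$ exhibited in \leref{generalinearsuperalg}, so that bracket preservation follows from $P$ being a super-representation together with relations \equref{genlinsuperalg1}--\equref{genlinsuperalg9}; your worry about the single normalisation $\tfrac{1}{2}$ is well placed but resolves uniformly, since the coefficient $2$ appears in all three bracket types (compare \equref{genlinsuperalg1}, \equref{genlinsuperalg5}, \equref{genlinsuperalg6}), exactly cancelling against the $\tfrac{1}{4}$ produced by the two factors of $\tfrac{1}{2}$. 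Likewise for (b) you replace computation by structure: the bilinears lie in $L_{00} \oplus L_{01} \subset L_{\mathbb{Z}_{2}\times\mathbb{Z}_{2}}$ and are therefore primitive for the braided coproduct of \prref{gradedstruct} (your cross-term cancellation via $\theta$ is precisely why the anticommutator is paired with degrees of color-sign $-1$ and the commutator with $+1$), the restriction of $\theta$ of \equref{colfunctRelParSe} to the subgroup $\{(0,0),(0,1)\}$ is the ordinary super sign (this is the content of \prref{subalgPbf}, which the paper also invokes at this point), and the identities propagate from parity-preserving primitive generators by (anti)multiplicativity of $\Delta$, $\varepsilon$, $S$. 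What your route buys is a self-contained argument -- nothing is imported beyond \leref{commrel} and \leref{generalinearsuperalg}, and the factor $\tfrac{1}{2}$ becomes conceptually forced rather than checked -- and it makes explicit the Jordan--Schwinger-type picture (composition of the matrix representation with a fixed paraparticle embedding) that underlies the cited works; what the paper's route buys is verification carried out entirely inside $P_{BF}$, independent of first identifying the paraparticle bilinears with matrix units.
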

\begin{proof}
%(sketch, for full computations and details see \cite{KaDaHa}).
$\underline{(a).}$ The first statement of the proposition is fully justified by the
fact that the linear map $J_{P_{BF}}$ preserves (see \cite{KaDaHa}) for all values $i, j, p, q = 1,2,...$ the Lie superalgebra brackets:
\begin{equation}
\begin{array}{c}
J_{P_{BF}}(\big[ X_{i}, X_{j} \big]) = \big[ J_{P_{BF}}(X_{i}), J_{P_{BF}}(X_{j}) \big]   \\ \\
J_{P_{BF}}(\big[ X_{i}, Y_{p} \big]) = \big[ J_{P_{BF}}(X_{i}), J_{P_{BF}}(Y_{p}) \big]  \\ \\
J_{P_{BF}}(\{ Y_{p}, Y_{q} \}) = \{ J_{P_{BF}}(Y_{p}), J_{P_{BF}}(Y_{q}) \}
\end{array}
\end{equation}
$\underline{(b).}$ To prove the second statement it is enough to apply \prref{subalgPbf} and then compute (see \cite{KaDaHa}) the rhs and the lhs of the following
\begin{equation}
\begin{array}{ccccccccc}
\underline{\Delta} \circ J_{P_{BF}} = (J_{P_{BF}} \otimes J_{P_{BF}}) \circ \Delta_{L},  & & & & \underline{\varepsilon} \circ J_{P_{BF}} = \varepsilon_{L},   &  & & &
\underline{S} \circ J_{P_{BF}} = J_{P_{BF}} \circ S_{L}
\end{array}
\end{equation}
where $\Delta_{L}, \varepsilon_{L}, S_{L}$ are the Lie superalgebra super Hopf structure maps and $\underline{\Delta}, \underline{\varepsilon}, \underline{S}$ are the $\theta$-braided group (here the braided group is super-Hopf algebra) structure maps \cite{KaDaHa, KaDaHa2} of the Relative Parabose Set $P_{BF}$.
\end{proof}

\section{Fock-like representations of $P_{BF}^{(1,1)}$}  \selabel{singleFockspace}

The relations \equref{parab-paraf}, \equref{parabparaf}, specify the Relative Parabose Set algebra $P_{BF}$. If we take into account all combinations of values of the superscripts $\xi, \eta, \epsilon$ then \equref{parab-paraf} produce 12(=6+6) relations while \equref{parabparaf} produce 28 relations. These can all be found written explicitly in \cite{KaDaHa}.

In this section we are going to study the Relative Parabose Set algebra described by relations \equref{parab-paraf}, \equref{parabparaf} but for the special case for which the $i,j,k$ indices take only a single value each: $i = j = k = 1$. So our algebra will be called the Relative Parabose Set algebra in a sigle parabosonic ($m = 1$) and a single parafermionic ($n = 1$) degree of freedom. It will be denoted $P_{BF}^{(1,1)}$. Its four generators are $b^{+}, b^{-}$ (corresponding to the parabosonic degree of freedom) and $f^{+}, f^{-}$ (corresponding to the parafermionic degree of freedom). The relations satisfied by these generators are explicitly
\begin{equation} \eqlabel{mixed}
\begin{array} {ccc}
\big[ \{ b^{+}, b^{+} \}, f^{-} \big] = 0,  & & \big[ [ f^{+}, f^{-} ], b^{-} \big] = 0     \\
%    &     &              \\
\big[ \{ b^{-}, b^{-} \}, f^{-} \big] = 0,  & &   \big[ \{ b^{+}, b^{-} \}, f^{-} \big] = 0    \\
%   &   &                   \\
   &     &              \\
\big[ \{ f^{-}, b^{+} \}, b^{-} \big] = -2 f^{-},  &  &  \{ \{ b^{-}, f^{+} \}, f^{-} \} = 2 b^{-}   \\
%   &   &                      \\
\big[ \{ b^{-}, f^{-} \}, b^{+} \big] =  2 f^{-},  & & \{ \{ f^{-}, b^{-} \}, f^{+} \} =  2 b^{-} \\
%   &                     \\
\big[ \{ b^{-}, b^{+} \}, f^{+} \big] = 0,  & &  \big[ [ f^{-}, f^{+} ], b^{+} \big] = 0    \\
%   &     &                 \\
\big[ \{ f^{+}, b^{-} \}, b^{+} \big] = 2 f^{+},  & &  \{ \{ b^{+}, f^{-} \}, f^{+} \} = 2 b^{+}  \\
%    &   &                       \\
\big[ \{ b^{+}, f^{+} \}, b^{-} \big] = -2 f^{+},  & &   \{ \{ f^{+}, b^{+} \}, f^{-} \} = 2 b^{+}  \\
%   & &                       \\
   &     &              \\
\big[ \{ f^{-}, b^{-} \}, b^{-} \big] = 0,   &  &    \big[ \{ f^{-}, b^{+} \}, b^{+} \big] = 0   \\
%   &   &                           \\
\big[ \{ b^{+}, b^{+} \}, f^{+} \big] = 0,   &  &   \big[ \{ b^{-}, b^{-} \}, f^{+} \big] = 0  \\
%   &          &                     \\
\big[ \{ f^{+}, b^{+} \}, b^{+} \big] = 0,   & &    \big[ \{ f^{+}, b^{-} \}, b^{-} \big] = 0   \\
%   &         &                   \\
   &     &              \\
\{ \{ b^{-}, f^{-} \}, f^{-} \} = 0,   &  &  \{ \{ b^{-}, f^{+} \}, f^{+} \} = 0  \\
%    &   &                          \\
\{ \{ b^{+}, f^{+} \}, f^{+} \} = 0,   &  &  \{ \{ b^{+}, f^{-} \}, f^{-} \} = 0
\end{array}
\end{equation}
The above relations \equref{mixed} stem directly from \equref{parabparaf} and are exactly the same relations with relations (3), p.964 of ref. \cite{Ya2}. Furthermore we also have the ``unmixed relations''
\begin{equation}   \eqlabel{pure}
\begin{array}{ccc}
\big[ b^{-}, \{ b^{+}, b^{-} \} \big] = 2 b^{-}, &  &  \big[ b^{+}, \{ b^{+}, b^{+} \} \big]= 0 \\
%           \\
\big[ b^{-}, \{ b^{-}, b^{-} \} \big]= 0,  &  & \big[ b^{-}, \{ b^{+}, b^{+} \} \big] =  4 b^{+}  \\
%           \\
\big[ b^{+}, \{ b^{-}, b^{-} \} \big] = - 4 b^{-} ,  &   & \big[ f^{-}, [ f^{+}, f^{-} ] \big] = 2 f^{-}  \\
%     \\
\big[ b^{+}, \{ b^{-}, b^{+} \} \big] = - 2 b^{+},  &  &  \big[ f^{+}, [ f^{-}, f^{+} ] \big] = 2 f^{+}
\end{array}
\end{equation}
Relations \equref{pure} stem directly from \equref{parab-paraf} and are exactly the same as relations (1), (2), p.964 of ref. \cite{Ya2}.

So the Relative Parabose Set algebra in a sigle parabosonic ($m = 1$) and a single parafermionic ($n = 1$) degree of freedom will be denoted $P_{BF}^{(1,1)}$, has the 4 generators $b^{+}, b^{-}, f^{+}, f^{-}$ and relations \equref{mixed}, \equref{pure}. These are 32 relations totally.

Before proceeding to reviewing the result of \cite{Ya2}, let us recall a result which was already conjectured from the beginnings of the study of paraparticle algebras (see \cite{GreeMe, OhKa}).
\begin{proposition} \prlabel{uniquenessofFocksp}
If we consider representations of $P_{BF}^{(1,1)}$, under the adjointness conditions $(b^{-})^{\dagger} = b^{+}$ and $(f^{-})^{\dagger} = f^{+}$, on a complex, Euclidean\footnote{Euclidean or pre-Hilbert space, in the sense that it is an inner product space, but not necessarily complete (with respect to the inner product)}, infinite dimensional space possessing a unique vacuum vector $|0 \rangle$ satisfying $b^{-} |0 \rangle = f^{-} |0 \rangle = 0$, then the following conditions ($p$ may be an arbitrary positive integer)
\begin{equation}  \eqlabel{singleoutFock}
\begin{array}{ccccc}
b^{-} b^{+} |0 \rangle = f^{-} f^{+} |0 \rangle = p |0 \rangle  &  &   &  &
b^{-} f^{+} |0 \rangle = f^{-} b^{+} |0 \rangle = 0
\end{array}
\end{equation}
single out an irreducible representation which is unique up to unitary equivalence.
\end{proposition}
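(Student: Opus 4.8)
The plan is to treat this as a standard uniqueness-of-Fock-space argument, organized into four steps: cyclicity of the vacuum, determination of the Gram matrix, construction of an intertwiner, and irreducibility. Existence of the representation is supplied by the explicit construction reviewed in \seref{singleFockspace}, so the real content is uniqueness and irreducibility. First I would introduce the number-type operators $N_b = \frac{1}{2}\{b^+, b^-\}$ and $N_f = \frac{1}{2}[f^+, f^-]$ and use the unmixed relations \equref{pure} to verify $[N_b, b^\pm] = \pm b^\pm$ and $[N_f, f^\pm] = \pm f^\pm$, so that $(N_b, N_f)$ grades the space into joint eigenspaces. The vacuum conditions \equref{singleoutFock} fix the eigenvalues of $N_b$ and $N_f$ on $|0\rangle$ in terms of $p$. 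Repeated use of the relations \equref{mixed}, \equref{pure} then lets me normal-order an arbitrary monomial in the four generators, pushing the annihilation operators $b^-, f^-$ to the right where they kill $|0\rangle$; this shows that the cyclic subspace generated by $|0\rangle$ is spanned by ``normally ordered'' vectors built from $b^+$ and $f^+$, a family that is finite in each $(N_b, N_f)$-grade (the parafermionic tower truncating at a height controlled by $p$).

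Next I would show that the inner products of these spanning vectors are entirely determined by the hypotheses. For normally-ordered monomials $X, Y$ in the creation operators, I would compute $\langle 0 | X^\dagger Y | 0 \rangle$ using $(b^-)^\dagger = b^+$ and $(f^-)^\dagger = f^+$, then normal-order $X^\dagger Y$ through \equref{mixed}, \equref{pure}; every resulting term either annihilates $|0\rangle$ or collapses, via \equref{singleoutFock}, to a scalar multiple of $\langle 0 | 0 \rangle$. The upshot is a Gram matrix whose entries are fixed polynomials in the single parameter $p$, independent of which representation satisfying the hypotheses one started from. I would run this computation grade by grade in the $(N_b, N_f)$-grading, so that only finitely many vectors are involved at each stage and the recursion on the number of creation operators visibly terminates.

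With the Gram matrix in hand, uniqueness is formal. Given two representations $(V, |0\rangle)$ and $(V', |0'\rangle)$ satisfying the hypotheses for the same $p$, I would define $U$ on spanning vectors by $U(X|0\rangle) = X|0'\rangle$. Since the ambient space is an inner product space its only null vector is $0$, so the linear dependencies among the $X|0\rangle$ are precisely those forced by the common Gram matrix; hence $U$ is well-defined, inner-product preserving, and (by cyclicity, which makes each space equal to the algebraic span of its spanning vectors) surjective. It intertwines the generators because the action of each of $b^\pm, f^\pm$ on a normally-ordered monomial is itself dictated by the relations, giving the claimed unitary equivalence. For irreducibility I would take a nonzero invariant subspace $W$; since $N_b, N_f$ lie in the algebra they preserve $W$, so I may pick a nonzero joint eigenvector $w \in W$, and then apply annihilation operators to lower its grade. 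As each grade is finite-dimensional and the descent can terminate only at the vacuum grade, a nonzero multiple of $|0\rangle$ lands in $W$, and cyclicity forces $W = V$.

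The step I expect to be the main obstacle is the second one: checking that the normal-ordering of $X^\dagger Y$ is \emph{consistent} and that \equref{singleoutFock} supplies exactly the data needed to evaluate every inner product. Because the relations \equref{mixed} genuinely mix the parabosonic and parafermionic generators, moving a $b^-$ past an $f^+$, or an $f^-$ past a $b^+$, produces the interaction terms, and one must confirm that distinct reduction orders yield the same scalar, so that the Gram matrix is neither over- nor under-determined by $p$. Deriving the closed-form commutation identities for $[b^-, (b^+)^k]$, $[f^-, (f^+)^k]$ and their mixed analogues acting on the generating monomials (which in this parastatistics setting depend on $N_b, N_f$, and hence on $p$) is where the genuine calculation sits; once these are in place, the remaining arguments are routine.
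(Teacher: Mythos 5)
The first thing to note is that the paper contains no proof of \prref{uniquenessofFocksp} for your attempt to be compared against: the proposition is explicitly \emph{recalled} as a result ``already conjectured from the beginnings of the study of paraparticle algebras'' and is deferred to the literature \cite{GreeMe, OhKa}, with the structure of the resulting modules then quoted from \cite{Ya2} in \thref{FockspstructPBF11}. Judged on its own merits, your outline is the classical Greenberg--Messiah-style uniqueness argument (all vacuum expectation values are forced by the trilinear relations together with \equref{singleoutFock}; a GNS-type intertwiner is then built from the common Gram form), its overall architecture is sound, and you locate the genuinely computational content correctly, namely the closed-form reordering identities that reduce every $\langle 0|X^{\dagger}Y|0\rangle$ to a polynomial in $p$ times $\langle 0|0\rangle$.

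Two points nonetheless need repair. First, cyclicity: nothing in the hypotheses says $V$ is generated by $|0\rangle$, yet both the surjectivity of your intertwiner $U$ and the final step $W=V$ of your irreducibility argument silently use it. It must be derived from the data: the adjointness conditions make the represented algebra $\mathcal{A}$ a $*$-algebra, so the vectors orthogonal to the cyclic subspace $\mathcal{A}|0\rangle$ form an invariant subspace (orthogonal projections onto the finite-dimensional graded pieces of $\mathcal{A}|0\rangle$ exist even without completeness), and a nonzero vector there of minimal grade would be annihilated by $b^{-}$ and $f^{-}$, i.e.\ would be a second vacuum, contradicting the uniqueness hypothesis. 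The same hypothesis is what justifies your unproved assertion that the descent in the irreducibility step ``can terminate only at the vacuum grade'': a nonzero graded vector killed by both $b^{-}$ and $f^{-}$ is precisely a second vacuum, and you never invoke vacuum uniqueness anywhere, although it is the one hypothesis that rules out, e.g., direct sums of two Fock modules. Second, the step you flag as the main obstacle --- consistency of different normal-ordering routes --- is not actually an obstacle: $\langle 0|X^{\dagger}Y|0\rangle$ is evaluated inside a given representation, where \equref{mixed} and \equref{pure} hold as operator identities, so all reduction paths agree automatically; what has to be proved is only that \emph{some} reduction path terminates, i.e.\ that the relations plus \equref{singleoutFock} suffice to evaluate every vacuum expectation value --- that recursion, not confluence, is the Greenberg--Messiah computation. (Minor: to ``pick a joint eigenvector'' in an invariant $W$ you should project the graded components of an arbitrary $w\in W$ back into $W$ using polynomials in $N_{b},N_{f}$; and positivity of the Gram form for positive integer $p$, i.e.\ existence, is legitimately supplied by the explicit module of \thref{FockspstructPBF11} rather than by your uniqueness argument, so no circularity arises.)
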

In other words the above proposition tells us that for any positive integer $p$ there is an irreducible representation of $P_{BF}^{(1,1)}$ uniquely specified (up to unitary equivalence) by $b^{-} |0 \rangle = f^{-} |0 \rangle = 0$ together with relations \equref{singleoutFock}. It is these representations which we will call \textbf{Fock-like representations of $\mathbf{P_{BF}^{(1,1)}}$} from now on. We emphasize on the fact that each one of \textbf{these representations is characterized by the positive integer} $\mathbf{p}$, in other words the value of $p$ is part of the data which uniquely specify the representation.

Finally, we introduce the following notation ($\epsilon, \eta = \pm$)
\begin{equation}  \eqlabel{defoper}
\begin{array}{cccccccccc}
N_{b} = \frac{1}{2} \{ b^{+}, b^{-} \} - \frac{p}{2} , &  N_{f} = \frac{1}{2} [ f^{+}, f^{-} ] + \frac{p}{2} , & R^{\eta} = \frac{1}{2} \{ b^{\eta}, f^{\eta} \}  , & Q^{\epsilon} = \frac{1}{2} \{ b^{-\epsilon}, f^{\epsilon} \}
\end{array}
\end{equation}

In \cite{Ya2}, the authors investigate the structure of the carrier spaces of the Fock-like representations of $P_{BF}^{(1,1)}$. Their results may be summarized in the following (notation due to us)
\begin{theorem}[\textbf{Fock-spaces structure of} $\mathbf{P_{BF}^{(1,1)}}$] \thlabel{FockspstructPBF11}
The carrier spaces of the Fock-like representations of $P_{BF}^{(1,1)}$, uniquely determined (as representations) under the conditions specified in the above conjecture, are:
\begin{equation}  \eqlabel{carrierFocksp}
\bigoplus_{n=0}^{p} \bigoplus_{m=0}^{\infty} \mathcal{V}_{m,n}
\end{equation}
where $p$ is an arbitrary (but fixed) positive integer and the subspaces $\mathcal{V}_{m,n}$ are 2-dimensional except for the cases $m = 0$, $n = 0, p$ i.e. except the subspaces $\mathcal{V}_{0,n}$, $\mathcal{V}_{m,0}$, $\mathcal{V}_{m,p}$ which are 1-dimensional. Let us see how the corresponding vectors look like:

$\blacktriangleright$ \underline{If $1 \leq m$ and $1 \leq n < p$}, then the subspace $\mathcal{V}_{m,n}$ is spanned by all vectors of the form
\begin{equation}   \eqlabel{spanvect}
\Big| \begin{array}{c}
      m_{1}, m_{2}, ..., m_{l}  \\
n_{0},n_{1}, n_{2}, ..., n_{l}
      \end{array} \Big\rangle \equiv (f^{+})^{n_{0}} (b^{+})^{m_{1}} (f^{+})^{n_{1}} (b^{+})^{m_{2}} (f^{+})^{n_{2}} ... (b^{+})^{m_{l}} (f^{+})^{n_{l}} |0 \rangle
\end{equation}
where $m_{1} + m_{2} + ... + m_{l} = m$ , $n_{0} + n_{1} + n_{2} + ... + n_{l} = n$ and $m_{i} \geq 1$ (for $i = 1, 2, ... , l$), $n_{i} \geq 1$ (for $i = 1, 2, ..., l-1$) and $n_{0}, n_{l} \geq 0$.

For any specific combination of values $(m,n)$ \textbf{the corresponding subspace} $\mathbf{\mathcal{V}_{m,n}}$ \textbf{has a basis consisting of the two vectors}
{\small
\begin{equation} \eqlabel{subspbase}
\begin{array}{c}
\phi_{m,n}\equiv|m,n,\alpha\rangle\equiv(f^{+})^{n} (b^{+})^{m}|0\rangle=\Big| \begin{array}{c}
                                                                               m  \\
                                                                               n, 0
                                                                               \end{array} \Big\rangle   \\
\textrm{and}  \\
\psi_{m,n}\equiv|m,n,\beta\rangle\equiv (f^{+})^{(n-1)} (b^{+})^{(m-1)} R^{+}|0\rangle=\frac{1}{2} \Big| \begin{array}{c}
                                                                                                         m  \\
                                                                                                         n-1, 1
                                                                                                         \end{array}\Big\rangle +
\frac{1}{2} \Big| \begin{array}{c}
                  m-1, 1  \\
                  n-1, 1, 0
                  \end{array} \Big\rangle
\end{array}
\end{equation}
}
where we use the notation $R^{\eta} = \frac{1}{2} \{ b^{\eta}, f^{\eta} \}$ for $\eta = \pm$. In other words we can always express any vector $\Big| \begin{array}{c}
      m_{1}, m_{2}, ..., m_{l}  \\
n_{0},n_{1}, n_{2}, ..., n_{l}
      \end{array} \Big\rangle$
of the form \equref{spanvect} as a linear combination of vectors of the form \equref{subspbase}
{\small
\begin{equation}
\Big| \begin{array}{c}
      m_{1}, m_{2}, ..., m_{l}  \\
n_{0},n_{1}, n_{2}, ..., n_{l}
      \end{array} \Big\rangle=c_{1}|m,n,\alpha\rangle+c_{2}|m,n,\beta\rangle=c_{1}\phi_{m,n}+c_{2}\psi_{m,n}
\end{equation}
}

$\blacktriangleright$ \underline{If $m=0$ or $n=0$ or $n=p$}, the vectors $|0, n, \beta \rangle$ and $|m, 0, \beta \rangle$ are (by definition) zero and furthermore if $m\neq0$ the vector $|m, p, \beta \rangle$ becomes proportional to $|m, p, \alpha \rangle$, thus:
{\small
\begin{equation}
\psi_{0,n}\equiv|0,n,\beta\rangle=\psi_{m,0}\equiv|m,0,\beta\rangle=0 \ and \ \psi_{m,p}\equiv|m,p,\beta\rangle=\frac{1}{p}\phi_{m,p}\equiv\frac{1}{p}|m,p,\alpha\rangle
\end{equation}
}
Consequently, the corresponding subspaces $\mathcal{V}_{0,n}$, $\mathcal{V}_{m,0}$, $\mathcal{V}_{m,p}$ are 1-dimensional and their bases consist of the single vectors $\phi_{0,n}\equiv|0,n,\alpha\rangle$, $\phi_{m,0}\equiv|m,0,\alpha\rangle$, $\phi_{m,p}\equiv|m,p,\alpha\rangle$ respectively.

$\blacktriangleright$ \underline{If $n \geq p+1$}, all basis vectors of \equref{subspbase} vanish.
\end{theorem}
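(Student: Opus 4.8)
The plan is to proceed in four stages: reduce the whole module to words in the creation operators, install a bi-grading by the number operators $N_b,N_f$ of \equref{defoper}, collapse each graded piece onto the two vectors $\phi_{m,n},\psi_{m,n}$ of \equref{subspbase}, and finally pin down the exact dimension in the bulk and on the boundary. First I would normal-order: starting from an arbitrary monomial in $b^{\pm},f^{\pm}$ applied to $|0\rangle$, I would use the trilinear relations \equref{mixed} and \equref{pure} to commute every annihilation operator $b^{-},f^{-}$ to the right, where it either kills the vacuum ($b^{-}|0\rangle=f^{-}|0\rangle=0$) or acts through the Fock conditions \equref{singleoutFock}. This is the standard (if tedious) parastatistics straightening, and it shows that the carrier space is spanned by the vectors \equref{spanvect}, i.e. by words in $b^{+},f^{+}$ alone.

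Second, I would install the bi-grading. From \equref{pure} one reads off $[N_b,b^{\pm}]=\pm b^{\pm}$ and $[N_f,f^{\pm}]=\pm f^{\pm}$, while the mixed relations $[\{b^{+},b^{-}\},f^{\pm}]=0$ and $[[f^{+},f^{-}],b^{\pm}]=0$ of \equref{mixed} give $[N_b,f^{\pm}]=[N_f,b^{\pm}]=0$; together with $N_b|0\rangle=N_f|0\rangle=0$ (immediate from \equref{singleoutFock}) this makes each word \equref{spanvect} of total $b^{+}$-degree $m$ and $f^{+}$-degree $n$ a joint eigenvector, so that $\mathcal{V}_{m,n}$ is exactly the joint $(m,n)$-eigenspace and the direct sum \equref{carrierFocksp} is the corresponding eigenspace decomposition. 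The index $n$ is cut off at $p$ because $(f^{+})^{p+1}|0\rangle=0$ (the order-$p$ parafermion ceiling, derived from the parafermionic relation in \equref{pure} together with $f^{-}f^{+}|0\rangle=p|0\rangle$), which simultaneously proves the final assertion that all basis vectors vanish for $n\geq p+1$.

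Third comes the heart of the argument: the collapse of each $\mathcal{V}_{m,n}$ onto $\{\phi_{m,n},\psi_{m,n}\}$. The key structural lemma, extracted from \equref{mixed}, is that $R^{+}=\frac{1}{2}\{b^{+},f^{+}\}$ satisfies $[R^{+},b^{+}]=0$ (from $[\{f^{+},b^{+}\},b^{+}]=0$) and $\{R^{+},f^{+}\}=0$ (from $\{\{b^{+},f^{+}\},f^{+}\}=0$); a one-line computation then forces $(R^{+})^{2}=0$. Using the straightening identity $f^{+}b^{+}=2R^{+}-b^{+}f^{+}$ I would bubble-sort every word \equref{spanvect}, moving all $f^{+}$ to the left: each transposition spawns a diagonal term $-b^{+}f^{+}$, which continues sorting toward $\phi_{m,n}=(f^{+})^{n}(b^{+})^{m}|0\rangle$, plus a term carrying one factor of $R^{+}$. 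Since $R^{+}$ commutes with $b^{+}$ and anticommutes with $f^{+}$, that single $R^{+}$ can be slid to the right to produce $\psi_{m,n}$, whereas any second $R^{+}$ generated further down the sort can be brought adjacent to the first and annihilated by $(R^{+})^{2}=0$. Hence every word reduces to a combination of $\phi_{m,n}$ and $\psi_{m,n}$, giving $\dim\mathcal{V}_{m,n}\leq 2$.

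Finally I would settle the exact dimensions. When $m=0$ or $n=0$ there is only the single word $(f^{+})^{n}|0\rangle$, respectively $(b^{+})^{m}|0\rangle$, so $\mathcal{V}_{m,n}$ is manifestly one-dimensional and $\psi_{0,n}=\psi_{m,0}=0$ by the empty-product convention. For the bulk $1\leq m$, $1\leq n<p$ I would prove $\phi_{m,n},\psi_{m,n}$ linearly independent by computing their $2\times 2$ Gram matrix in the inner product, pushing the adjoints $(b^{+})^{\dagger}=b^{-}$, $(f^{+})^{\dagger}=f^{-}$ through to the vacuum via the same normal ordering and \equref{singleoutFock}; nondegeneracy of that matrix yields dimension exactly $2$. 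The delicate case is $n=p$, where I expect the parafermion ceiling to force $\psi_{m,p}=\frac{1}{p}\phi_{m,p}$, obtained by sliding $R^{+}$ leftward in $\psi_{m,p}$ and invoking $(f^{+})^{p+1}|0\rangle=0$. I expect this last collapse, together with the Gram-matrix computation that must distinguish rank $2$ from rank $1$, to be the main obstacle, since it is precisely here that the value of $p$ enters the dimension count in a nontrivial way.
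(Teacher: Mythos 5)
Your architecture --- reduce to creation-operator words, bigrade by the joint $(N_{b},N_{f})$-eigenvalues, bubble-sort every word \equref{spanvect} down to $\phi_{m,n}$ and $\psi_{m,n}$ using $f^{+}b^{+}=2R^{+}-b^{+}f^{+}$ together with $[R^{+},b^{+}]=0$, $\{R^{+},f^{+}\}=0$ and the resulting $(R^{+})^{2}=0$ --- is sound, and it is in fact the machinery this paper itself deploys later: your straightening identities are precisely \equref{2} and \equref{3}, and the nilpotency $(R^{+})^{2}=0$ is exactly what is invoked in the proofs of \prref{Q-action} and \prref{Q+action}. Note, though, that the paper offers no proof of \thref{FockspstructPBF11} at all: it is presented as a summary of results of \cite{Ya2} (with the action formulae \equref{equat3} proved in \cite{KaDaHa3}), so your proposal is a reconstruction rather than a rival to a textual argument. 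Up to and including the bound $\dim_{\mathbb{C}}\mathcal{V}_{m,n}\leq 2$ and the one-dimensionality for $m=0$ or $n=0$, your sketch goes through.

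The genuine gap lies in the two places where $p$ enters the dimension count, and in both you try to stay purely algebraic where that is impossible. The trilinear relations \equref{mixed}, \equref{pure} together with the vacuum conditions \equref{singleoutFock} define a Verma-type module in which $(f^{+})^{p+1}|0\rangle$ and $\phi_{m,p}-p\,\psi_{m,p}$ are \emph{nonzero}; they die only because of the Euclidean structure of \prref{uniquenessofFocksp}: one computes $\|(f^{+})^{n}|0\rangle\|^{2}=\prod_{k=1}^{n}k(p+1-k)$, which first vanishes at $n=p+1$, and similarly $\|\phi_{m,p}-p\,\psi_{m,p}\|=0$, and then uses positive-definiteness (a zero-norm vector in a pre-Hilbert space is zero). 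So your parenthetical derivation of the ceiling ``from \equref{pure} together with $f^{-}f^{+}|0\rangle=p|0\rangle$'' is not a derivation, and your proposed mechanism at $n=p$ actually fails: sliding via $b^{+}(f^{+})^{k}=(-f^{+})^{k}b^{+}+2kR^{+}(f^{+})^{k-1}$ yields, e.g., $\phi_{1,p}=(-1)^{p}b^{+}(f^{+})^{p}|0\rangle+2p\,\psi_{1,p}$, i.e.\ an expansion of $b^{+}\phi_{0,p}$ in the basis $\{\phi_{1,p},\psi_{1,p}\}$, never a degeneration of $\psi_{m,p}$ against $\phi_{m,p}$; and since the straightening identities are $p$-independent while the coefficient $\frac{1}{p}$ is not, no amount of sliding can produce the collapse --- $p$ enters only through \equref{singleoutFock} and the inner product. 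For the same reason, ``all basis vectors vanish for $n\geq p+1$'' does not follow from $(f^{+})^{p+1}|0\rangle=0$ alone: in $\phi_{m,n}=(f^{+})^{n}(b^{+})^{m}|0\rangle$ with $m$ odd, pushing the $f^{+}$'s through the unpaired $b^{+}$ spawns $R^{+}$ terms, so the $\psi$-branch must be killed at those levels too. The repair is the tool you already planned for the bulk: the Gram/norm computations (equivalently, observe that $f^{-}(b^{+})^{m}|0\rangle=0$ and $[[f^{+},f^{-}],b^{+}]=0$ make each $(b^{+})^{m}|0\rangle$ a lowest-weight vector of weight $-p$ for the parafermionic $\mathfrak{su}(2)$, and unitarity then bounds $N_{f}\leq p$); indeed the paper's Note~1, where the orthogonal pair is $c_{+}|m,n,\alpha\rangle$ and $-c_{-}\big(|m,n,\alpha\rangle-p|m,n,\beta\rangle\big)$, shows the minus branch degenerating to the null vector exactly at $n=p$. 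You correctly flagged this boundary analysis as the main obstacle; the flag is right, but the proposed algebraic workaround would not close it.
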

The above described subspaces of $\bigoplus_{n=0}^{p} \bigoplus_{m=0}^{\infty} \mathcal{V}_{m,n}$ can be visualized as:
{\small
\begin{equation}  \eqlabel{2dimladdersubsp}
\begin{array}{cccccccc}
    \langle\phi_{0,0}\rangle & \langle\phi_{0,1}\rangle & \ldots  &  \langle\phi_{0,n}\rangle & \ldots & \ldots & \langle\phi_{0,p-1}\rangle & \langle\phi_{0,p}\rangle  \\
    \langle\phi_{1,0}\rangle & \langle\begin{array}{c}\phi_{1,1} \\ \psi_{1,1}\end{array}\rangle & \ldots & \langle\begin{array}{c}\phi_{1,n} \\ \psi_{1,n}\end{array}\rangle & \ldots &  \ldots & \langle\begin{array}{c}\phi_{1,p-1} \\ \psi_{1,p-1}\end{array}\rangle & \langle\phi_{1,p}\rangle  \\
    \vdots & \vdots & \ldots  & \vdots & \ldots & \ldots  & \vdots & \vdots     \\
    \langle\phi_{m,0}\rangle & \langle\begin{array}{c}\phi_{m,1} \\ \psi_{m,1}\end{array}\rangle & \ldots & \langle\begin{array}{c}\phi_{m,n} \\ \psi_{m,n}\end{array}\rangle & \langle\begin{array}{c}\phi_{m,n+1} \\ \psi_{m,n+1}\end{array}\rangle & \ldots & \ldots & \langle\phi_{m,p}\rangle \\
    \vdots & \vdots &  \ldots & \langle\begin{array}{c}\phi_{m+1,n} \\ \psi_{m+1,n}\end{array}\rangle & \ldots &  \ldots  & \vdots  & \vdots     \\
    \vdots & \vdots & \ldots & \vdots & \ldots & \ldots  & \vdots & \vdots
\end{array}
\end{equation}
}
We now present the formulae describing explicitly the action of the generators (and hence of the whole algebra) of $P_{BF}^{(1,1)}$ on the carrier spaces $\bigoplus_{n=0}^{p} \bigoplus_{m=0}^{\infty}\mathcal{V}_{m,n}$ for any positive integer $p$:
{\small
\begin{equation}  \eqlabel{equat3}
\begin{array}{l}
\bullet b^{-}\cdot\phi_{m,n} = \left\{%
\begin{array}{ll}
(-1)^{n}m \phi_{m-1,n}-2(-1)^{n}nm \psi_{m-1,n}, \ \underline{m:even}   \\   \\
-(-1)^{n}\big(2n-m-(p-1)\big) \phi_{m-1,n}-2(-1)^{n}n(m-1) \psi_{m-1,n}, \  \underline{m:odd}
\end{array}
\right.  \\
\bullet b^{-}\cdot\psi_{m,n}=\left\{%
\begin{array}{ll}
-(-1)^{n} \phi_{m-1,n}+(-1)^{n}\big(2n-m-p\big) \psi_{m-1,n}, \  \underline{m:even}   \\   \\
-(-1)^{n} \phi_{m-1,n}-(-1)^{n}(m-1) \psi_{m-1,n}, \  \underline{m:odd}
\end{array}
\right.    \\
  \\
\bullet f^{-}\cdot\phi_{m,n}=n(p+1-n) \phi_{m,n-1}, \qquad \qquad \bullet b^{+}\cdot\phi_{m,n} = (-1)^{n} \phi_{m+1,n} - (-1)^{n}2n \psi_{m+1,n}  \\
\bullet f^{-}\cdot\psi_{m,n}=\phi_{m,n-1}+(n-1)(p-n)\psi_{m,n-1},  \quad \bullet b^{+}\cdot\psi_{m,n} = -(-1)^{n} \psi_{m+1,n} \\  \\
%   \\
%b^{+} \cdot \phi_{m,n} = (-1)^{n} | m+1, n, \alpha \rangle + (-1)^{n-1}2n | m+1, n, \beta \rangle  \\
%b^{+} \cdot \psi_{m,n} = (-1)^{n-1} | m+1, n, \beta \rangle   \\
%   \\
\!\!
\begin{array}{lll}
\bullet f^{+}\!\cdot\!\phi_{m,n} = \left\{%
\begin{array}{cc}
\phi_{m,n+1},  \underline{if \ n \leq p-1}   \\
0, \qquad \qquad \underline{if \ n \geq p}
\end{array}
\right.
   &    &
\bullet f^{+}\!\cdot\!\psi_{m,n} = \left\{%
\begin{array}{cc}
\psi_{m,n+1},  \underline{if \ n \leq p-1}   \\
0, \qquad \qquad \underline{if \ n \geq p}
\end{array}
\right.
\end{array}
\end{array}
\end{equation}
} for all integers $0 \leq m$, $0 \leq n \leq p$.

The direct proof \cite{KaDaHa3} of these formulae involves lengthy ``normal--ordering" algebraic computations inside $P_{BF}^{(1,1)}$. From these formulae, we can easily figure out that we have a kind of \textit{generalized creation--annihilation operators} acting on a
two dimensional ladder of subspaces. Initiating from the remark that we are dealing with a cyclic module which moreover can be generated by any of its elements, we can prove \cite{KaDaHa3} that the Fock--like representations, explicitly given by \equref{2dimladdersubsp}, \equref{equat3} and visually represented in the above diagram, are irreducible representations (or: simple $P_{BF}^{(1,1)}$-modules) for any $p \in \mathbb{N}^{*}$.

Furthermore, defining $\verb"deg"\varphi_{m,n}\!=\!\verb"deg"\psi_{m,n}\!=\!\big(m\ \textsf{mod}\ 2,\ n\ \textsf{mod}\ 2 \big)\!\in\!\mathbb{Z}_{2}\!\times\!\mathbb{Z}_{2}$ for the carrier spaces and $\verb"deg" b^{\pm}\!=\!(1,0)$, $\verb"deg" f^{\pm}\!=\!(0,1)$ for the algebra, the Fock--like representations of $P_{BF}^{(1,1)}$ over $\bigoplus_{n=0}^{p} \bigoplus_{m=0}^{\infty}\mathcal{V}_{m,n}$, become $(\mathbb{Z}_{2} \times\mathbb{Z}_{2})$--graded modules, $\forall p\in\mathbb{N}^{*}$. (For proof and details see \cite{KaDaHa3}). The above described $\mathbb{Z}_{2}$ grading for the carrier spaces $\bigoplus_{n=0}^{p} \bigoplus_{m=0}^{\infty}\mathcal{V}_{m,n}$ can be visually represented in the following figure
{\scriptsize
\begin{equation} \eqlabel{carrspZ2grad}
\xymatrix{
\stackrel{(0,0)}{\bullet} & \stackrel{(0,1)}{\bullet} & \stackrel{(0,0)}{\bullet} & \stackrel{(0,1)}{\bullet} & \stackrel{(0,0)}{\bullet} & \cdots & \cdots & \stackrel{(0, \ p \textrm{mod} 2)}{\bullet}  \\
\stackrel{(1,0)}{\bullet} & \stackrel{(1,1)}{\blacksquare} & \stackrel{(1,0)}{\blacksquare} & \stackrel{(1,1)}{\blacksquare} & \stackrel{(1,0)}{\blacksquare} & \cdots & \cdots & \stackrel{(1, \ p \textrm{mod} 2)}{\bullet}  \\
\stackrel{(0,0)}{\bullet} & \stackrel{(0,1)}{\blacksquare} & \stackrel{(0,0)}{\blacksquare} & \stackrel{(0,1)}{\blacksquare} & \stackrel{(0,0)}{\blacksquare} & \cdots & \cdots & \stackrel{(0, \ p \textrm{mod} 2)}{\bullet}  \\
\stackrel{(1,0)}{\bullet} & \stackrel{(1,1)}{\blacksquare} & \stackrel{(1,0)}{\blacksquare} & \stackrel{(1,1)}{\blacksquare} & \stackrel{(1,0)}{\blacksquare} & \cdots & \cdots & \stackrel{(1, \ p \textrm{mod} 2)}{\bullet}  \\
\stackrel{(0,0)}{\bullet} & \stackrel{(0,1)}{\blacksquare} & \stackrel{(0,0)}{\blacksquare} & \stackrel{(0,1)}{\blacksquare} & \stackrel{(0,0)}{\blacksquare} & \cdots & \cdots & \stackrel{(0, \ p \textrm{mod} 2)}{\bullet}  \\
\vdots &  \vdots  & \vdots  & \vdots  & \vdots  & \ddots & \ddots & \vdots
}
\end{equation}
}
where we have denoted with a bullet $``\bullet"$ the $\mathcal{V}_{m,n}$ subspaces for which either $m=0$ or $n=0$ or $n=p$ i.e. the $1$-dim subspaces for the first and the $p$-th columns and the first row, while we have denoted by a black square $``\blacksquare"$ the $\mathcal{V}_{m,n}$ subspaces for which $m\neq0$ and $n\neq0$ and $n\neq p$ i.e. the $2$-dim subspaces which lie in the ``internal" rows and columns of the above ``matrix" of subspaces. We must underline at this point, that the $\mathbb{Z}_{2}$-grading for the $P_{BF}$ algebra described above is inequivalent to the $\mathbb{Z}_{2}$-grading introduced by \cite{Ya1} and described in \prref{gradedstruct}. These are inequivalent in  the sense that they correspond to different $(\mathbb{Z}_{2}\times\mathbb{Z}_{2})$-actions on the carrier space which is $P_{BF}$ itself. In other words we have the same carrier space but with different $(\mathbb{Z}_{2}\times\mathbb{Z}_{2})$-module structures. The advantage of the former choice \cite{Ya1} lies in the fact that it provides us with the isomorphism $P_{BF} \cong \mathbb{U}(L_{\mathbb{Z}_{2} \times \mathbb{Z}_{2}})$ whereas the advantage of the present choice lies in the fact that it provides us with the structure of a $(\mathbb{Z}_{2}\times\mathbb{Z}_{2})$-module (in combination of course with the carrier space grading described in \equref{carrspZ2grad}).

\textbf{Note 1:} Inside each one of the above presented $2$--dim subspaces $\mathcal{V}_{m,n}=\big\langle\begin{array}{c}\phi_{m,n} \\ \psi_{m,n}\end{array}\big\rangle$ (with $m\neq0$, $n\neq0,p$) the vectors $\phi_{m,n}$, $\psi_{m,n}$ of \equref{subspbase} are linearly independent and constitute a basis. However, these vectors are neither orthogonal nor normalized. We can show that an orthonormal set of basis vectors can be obtained as
%\begin{equation} \label{orthonormal1}
$$
\begin{array}{cccc}
| m, n, + \rangle = c_{+} | m, n, \alpha \rangle &  &  & | m, n, - \rangle = -c_{-} \Big( | m, n, \alpha \rangle - p | m, n, \beta \rangle \Big)
\end{array}
$$
%\end{equation}
where $c_{\pm}$ are suitable normalization factors \cite{Ya2}. Now we can show the orthonormalization relation $\langle m,n,s|m^{'},n^{'},s^{'}\rangle=\delta_{m,m^{'}}\delta_{n,n^{'}}\delta_{s,s^{'}}$ and the completeness relation $\sum_{m=0}^{\infty} \sum_{n=0}^{p} \sum_{s=\pm}|m,n,s\rangle \langle m,n,s| = 1$.

\textbf{Note 2:} If we consider the Hermitian operators $N_{s} = \frac{1}{p} \big( N_{f}^{2} -(p+1)N_{f} + f^{+}f^{-} + \frac{p}{2} \big)$, $N_{f}=\frac{1}{2}[f^{+},f^{-}]+\frac{p}{2}$ and $N_{b} = \frac{1}{2}\{b^{+},b^{-} \}-\frac{p}{2}$ we can show that they constitute a \textit{Complete Set of Commuting Observables} (\textit{C.S.C.O.}): We have $[N_{b}, N_{f}]=[N_{b}, N_{s}]=[N_{f}, N_{s}]=0$; their common eigenvectors are exactly the elements of the orthonormal basis formerly described. Any vector $|m,n,s\rangle$ is uniquely determined as an eigenvector of $N_{b}$, $N_{f}$, $N_{s}$ by its eigenvalues $0 \leq m$, $0 \leq n \leq p$ and $s=\pm\frac{1}{2}$ respectively.

\section{Constructing infinite dimensional, decomposable super-representations for a Lie superalgebra}    \selabel{Focklikemodule}

In this section we will build infinite dimensional representations for an arbitrary Lie superalgebra starting from the Fock-like representations of $P_{BF}^{(1,1)}$ (described in \thref{FockspstructPBF11}) and the paraparticle realizations of an arbitrary Lie superalgebra (described in \prref{parapartrealiz}).

\paragraph{$\bullet$ \textbf{Preliminaries:}}
Before we start let us recall a lemma which the will be the link between the above ingredients. Let $A$ and $B$ be two (complex, associative) algebras and $\phi: A \rightarrow B$ an (associative) algebra homomorphism between them.
\begin{lemma} \lelabel{Parabreprfrombosrepr}
For any vector space $V$, any ${}_{B}V$ module of the algebra $B$ immediately gives rise to a $ \ {}_{A}V$ module of the algebra $A$ through the homomorphism $\phi: A \rightarrow B$
\begin{equation} \eqlabel{Parabactfrbosact}
a \cdot \vec{v} \stackrel{def.}{=} \phi(a) \cdot \vec{v} \equiv b \cdot \vec{v}
\end{equation}
where $\phi(a) = b$ and $\vec{v}$ stands for any element of $ \ V$. In the lhs of the above ``$\cdot$" stands for the $A$-action on $ \ V$, while in the rhs ``$\cdot$" stands for the $B$-action on $ \ V$.

Furthermore, if ${}_{B}V$ is an irreducible representation (a simple module) of $B$ and $\phi$ is an epimorphism of algebras then ${}_{A}V$ will also be an irreducible representation (a simple module) of $A$.
\end{lemma}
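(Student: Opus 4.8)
The plan is to verify the two assertions separately. For the first part, I would simply check that the prescription \equref{Parabactfrbosact} defines a genuine $A$-module structure on $V$. The module axioms follow directly from the fact that $\phi$ is an algebra homomorphism: for the associativity/compatibility axiom I would compute, for $a, a' \in A$ and $\vec{v} \in V$,
\begin{equation}
(a a') \cdot \vec{v} = \phi(a a') \cdot \vec{v} = \big( \phi(a) \phi(a') \big) \cdot \vec{v} = \phi(a) \cdot \big( \phi(a') \cdot \vec{v} \big) = a \cdot (a' \cdot \vec{v}),
\end{equation}
where the second equality uses that $\phi$ is multiplicative and the third uses that $V$ is a $B$-module. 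The unit axiom $1_{A} \cdot \vec{v} = \vec{v}$ follows from $\phi(1_{A}) = 1_{B}$, and linearity of the action in both arguments is immediate from linearity of $\phi$ and of the $B$-action. This first part is routine and presents no obstacle; it is essentially the standard observation that an algebra homomorphism induces restriction (pullback) of modules.

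For the second part, suppose ${}_{B}V$ is simple and $\phi$ is surjective. I would argue by contradiction, or directly, that ${}_{A}V$ has no proper nonzero submodules. The key point is that an $A$-submodule $W \subseteq V$ is automatically a $B$-submodule: given $w \in W$ and any $b \in B$, surjectivity of $\phi$ gives some $a \in A$ with $\phi(a) = b$, whence $b \cdot w = \phi(a) \cdot w = a \cdot w \in W$ since $W$ is $A$-stable. Thus every $A$-submodule is a $B$-submodule. Because ${}_{B}V$ is simple, its only $B$-submodules are $0$ and $V$, so the only $A$-submodules are $0$ and $V$, proving that ${}_{A}V$ is simple.

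I expect no serious obstacle here, since both claims reduce to elementary manipulations with the homomorphism property and surjectivity. The only point requiring genuine care is ensuring that submodules pull back correctly in the second part: the argument crucially uses surjectivity of $\phi$ to realize every element of $B$ as $\phi(a)$, which is precisely why the hypothesis that $\phi$ be an epimorphism cannot be dropped. Without surjectivity an $A$-submodule need not be $B$-stable, and simplicity can fail. I would emphasize this dependence explicitly, as it is the sole substantive hypothesis in the second statement.
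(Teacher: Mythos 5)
Your proof is correct. The paper actually states this lemma without proof (it is ``recalled'' as a standard fact linking the paraparticle realizations to the Fock-like modules), and your argument is precisely the standard restriction-of-scalars justification it implicitly relies on: the homomorphism property of $\phi$ gives the $A$-module axioms, and surjectivity of $\phi$ forces every $A$-submodule to be $B$-stable, which is indeed the one hypothesis that cannot be dropped in the simplicity statement.
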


\paragraph{$\bullet$ \textbf{Working in the Fock-space of $\mathbf{P_{BF}^{(1,1)}}$:}} We will start by calculating the action of the $Q^{-}$ and $Q^{+}$ operators (defined by rel. \equref{defoper}) on the basis elements of the carrier space  $\bigoplus_{n=0}^{p} \bigoplus_{m=0}^{\infty} \mathcal{V}_{m,n}$ (described in \thref{FockspstructPBF11}).

Let us recall the relations:
\begin{equation}  \eqlabel{1}
\begin{array}{ccccccc}
\big[ b^{-}, Q^{-} \big] = f^{-}  & , & \big[ b^{-}, Q^{+} \big] = 0  & , &  \{ f^{-}, Q^{+} \} = b^{-} & , & \{ f^{-}, Q^{-} \} = 0 \\
\big[ b^{+}, Q^{+} \big] = -f^{+} & , & \big[ b^{+}, Q^{-} \big] = 0  & , &  \{ f^{+}, Q^{-} \} = b^{+} & , & \{ f^{+}, Q^{+} \} = 0
\end{array}
\end{equation}
which are clearly a rewriting of the corresponding relations from \equref{mixed}. Based on relations \equref{1} we have the following
\begin{lemma}
We have the following relations
\begin{equation}   \eqlabel{2}
\begin{array}{c}
Q^{-} (f^{+})^{n} = (-f^{+})^{n} Q^{-} + \sum_{i=1}^{n} (-f^{+})^{i-1} b^{+} (f^{+})^{n-i}  \\
b^{+} (f^{+})^{k} = (-f^{+})^{k} b^{+} + 2 k R^{+} (f^{+})^{k-1}    \\
\end{array}
\end{equation}
for all integers $n, k \geq 0$, and
\begin{equation}    \eqlabel{3}
Q^{-} (f^{+})^{m} = (-f^{+})^{m} Q^{-} + m (-f^{+})^{m-1} b^{+} + m(m-1) (-f^{+})^{m-2} R^{+}
\end{equation}
for all positive integers $m \geq 0$.
\end{lemma}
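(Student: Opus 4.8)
The plan is to establish all three identities by induction on the relevant exponent, drawing the elementary one-step commutation rules from the rewritten relations \equref{1} and from the definition $R^{+}=\frac{1}{2}\{b^{+},f^{+}\}$. I would prove the two formulas in \equref{2} first and independently, and then deduce \equref{3} by substituting the second line of \equref{2} into the first and collecting terms.

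First I would prove the first line of \equref{2} by induction on $n$. The only ingredient is the anticommutator $\{f^{+},Q^{-}\}=b^{+}$ from \equref{1}, which I rewrite as the one-step move $Q^{-}f^{+}=-f^{+}Q^{-}+b^{+}$. The base case $n=0$ is the empty-sum identity $Q^{-}=Q^{-}$. For the inductive step I would write $Q^{-}(f^{+})^{n+1}=\bigl(Q^{-}(f^{+})^{n}\bigr)f^{+}$, insert the inductive hypothesis, push the trailing $f^{+}$ into each summand, and finally apply the one-step move to the leading term $(-f^{+})^{n}Q^{-}f^{+}$. The newly produced term $(-f^{+})^{n}b^{+}$ is exactly the missing $i=n+1$ summand, so the sum extends from $1\le i\le n$ to $1\le i\le n+1$, closing the induction.

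Next I would prove the second line of \equref{2}, again by induction on $k$; here the one-step move is $b^{+}f^{+}=-f^{+}b^{+}+2R^{+}$, read off directly from $2R^{+}=\{b^{+},f^{+}\}$. The step that requires care---and which I expect to be the main obstacle---is that the inductive computation produces a stray term $2(-f^{+})^{k}R^{+}$ that must be recombined with $2kR^{+}(f^{+})^{k}$ to yield the expected $2(k+1)R^{+}(f^{+})^{k}$. This forces the auxiliary fact that $R^{+}$ and $f^{+}$ anticommute. I would obtain it from the defining relation $\{\{b^{+},f^{+}\},f^{+}\}=0$ of \equref{mixed}, which reads $\{R^{+},f^{+}\}=0$ after dividing by $2$; iterating gives $R^{+}(f^{+})^{j}=(-f^{+})^{j}R^{+}$ for every $j\ge0$, which precisely turns $(-f^{+})^{k}R^{+}$ into $R^{+}(f^{+})^{k}$ and completes the step.

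Finally, to obtain \equref{3} I would substitute the second line of \equref{2} into the sum appearing in the first line. Each summand $(-f^{+})^{i-1}b^{+}(f^{+})^{m-i}$ splits into a diagonal piece $(-f^{+})^{m-1}b^{+}$, independent of $i$, and a remainder $2(m-i)(-f^{+})^{i-1}R^{+}(f^{+})^{m-i-1}$. Using the anticommutation rule $R^{+}(f^{+})^{j}=(-f^{+})^{j}R^{+}$ once more, the remainder collapses to $2(m-i)(-f^{+})^{m-2}R^{+}$. Summing the diagonal pieces gives $m(-f^{+})^{m-1}b^{+}$, while summing the remainders uses the arithmetic identity $\sum_{i=1}^{m}(m-i)=\frac{m(m-1)}{2}$, producing the coefficient $m(m-1)$ in front of $(-f^{+})^{m-2}R^{+}$. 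Assembling the three contributions yields \equref{3} exactly.
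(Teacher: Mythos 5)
Your proposal is correct and follows essentially the same route as the paper, whose proof is just the terse statement ``induction on $n$, $k$ respectively; their combination then yields \equref{3}''. You have simply filled in the details the paper leaves implicit --- in particular the auxiliary fact $\{R^{+},f^{+}\}=0$ (from $\{\{b^{+},f^{+}\},f^{+}\}=0$ in \equref{mixed}) needed to recombine terms in the second induction, and the sum $\sum_{i=1}^{m}(m-i)=\frac{m(m-1)}{2}$ that produces the coefficient $m(m-1)$ in \equref{3} --- all of which check out.
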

\begin{proof}
For each one of \equref{2} induction on $n$, $k$ respectively.   \\
Their combination then yields \equref{3}.
\end{proof}
Utilizing the above lemma together with the relation $\{Q^{-},R^{+}\}=(b^{+})^{2}$ we get the action of the $Q^{-}$ operator on the basis elements of $\bigoplus_{n=0}^{p} \bigoplus_{m=0}^{\infty} \mathcal{V}_{m,n}$
\begin{proposition}    \prlabel{Q-action}
We have the following formulas
\begin{equation}    \eqlabel{4}
\begin{array}{c}
\mathbf{Q^{-}|m,n,\alpha\rangle=(-1)^{n-1}n|m+1,n-1,\alpha\rangle+(-1)^{n}n(n-1)|m+1,n-1,\beta\rangle} \\
  \textrm{(for } 0\leq m \textrm{ and } 0\leq n\leq p \textrm{)}   \\  \textrm{and:} \\
\mathbf{Q^{-}|m,n,\beta\rangle=(-1)^{n-1}|m+1,n-1,\alpha\rangle+(-1)^{n}(n-1)|m+1,n-1,\beta\rangle} \\
  \textrm{(for } 1\leq m \textrm{ and } 1\leq n\leq p \textrm{)}
\end{array}
\end{equation}
Note that the above action-formulae imply
\begin{equation}  \eqlabel{41}
\begin{array}{ccccc}
\mathbf{Q^{-}|m,n,\alpha\rangle = nQ^{-}|m,n,\beta\rangle} & & and & & \mathbf{\big(Q^{-}\big)^{2}=0}
\end{array}
\end{equation}
verifying thus the (expected) nilpotency in the action of $Q^{-}$.
\end{proposition}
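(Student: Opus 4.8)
The plan is to apply $Q^{-}$ directly to the closed forms $\phi_{m,n}=(f^{+})^{n}(b^{+})^{m}|0\rangle$ and $\psi_{m,n}=(f^{+})^{n-1}(b^{+})^{m-1}R^{+}|0\rangle$ recorded in \equref{subspbase}, and to reduce every step to a small set of stated relations. The engine is \equref{3}: with exponent $n$ it reads $Q^{-}(f^{+})^{n}=(-f^{+})^{n}Q^{-}+n(-f^{+})^{n-1}b^{+}+n(n-1)(-f^{+})^{n-2}R^{+}$, so pushing $Q^{-}$ rightward through the string of parafermionic creators produces exactly three terms. The auxiliary facts I will repeatedly invoke are $[b^{+},Q^{-}]=0$ from \equref{1}, the commutativity $[R^{+},b^{+}]=0$ (a rewriting of $\big[\{f^{+},b^{+}\},b^{+}\big]=0$ in \equref{mixed}), and the anticommutativity $\{R^{+},f^{+}\}=0$ (from $\{\{b^{+},f^{+}\},f^{+}\}=0$ in \equref{mixed}).

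First I would dispose of the two boundary evaluations at the vacuum. From $Q^{-}=\frac{1}{2}\{b^{+},f^{-}\}$ together with $f^{-}|0\rangle=0$ and $f^{-}b^{+}|0\rangle=0$ (the defining conditions \equref{singleoutFock}) one gets $Q^{-}|0\rangle=0$; combining this with the relation $\{Q^{-},R^{+}\}=(b^{+})^{2}$ then yields $Q^{-}R^{+}|0\rangle=(b^{+})^{2}|0\rangle$. For the $\alpha$-vectors I apply \equref{3} to $(f^{+})^{n}(b^{+})^{m}|0\rangle$: the leading term $(-f^{+})^{n}Q^{-}(b^{+})^{m}|0\rangle$ vanishes because $Q^{-}$ commutes with $b^{+}$ and annihilates $|0\rangle$; the middle term is $(-1)^{n-1}n\,(f^{+})^{n-1}(b^{+})^{m+1}|0\rangle=(-1)^{n-1}n\,\phi_{m+1,n-1}$; and in the last term I commute $R^{+}$ past $(b^{+})^{m}$, recognise $(f^{+})^{n-2}(b^{+})^{m}R^{+}|0\rangle=\psi_{m+1,n-1}$, and read off the coefficient $(-1)^{n}n(n-1)$. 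This reproduces the first formula of \equref{4}.

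For the $\beta$-vectors the same three-term split (now at exponent $n-1$) gives a leading term $(-f^{+})^{n-1}(b^{+})^{m-1}Q^{-}R^{+}|0\rangle=(-1)^{n-1}\phi_{m+1,n-1}$ after substituting $Q^{-}R^{+}|0\rangle=(b^{+})^{2}|0\rangle$, a middle term $(-1)^{n}(n-1)\psi_{m+1,n-1}$, and a third term proportional to $(b^{+})^{m-1}(R^{+})^{2}|0\rangle$. The one step that is not mere bookkeeping — and hence the main obstacle — is showing this third term dies, i.e. $(R^{+})^{2}|0\rangle=0$. I expect to obtain it purely algebraically from the two quadratic relations above: writing $(R^{+})^{2}|0\rangle=\frac{1}{2}R^{+}\{b^{+},f^{+}\}|0\rangle$ and moving the outer $R^{+}$ rightward through each factor — commuting past $b^{+}$, anticommuting past $f^{+}$ — turns $R^{+}\{b^{+},f^{+}\}$ into $-\{b^{+},f^{+}\}R^{+}$, so the expression equals $-(R^{+})^{2}|0\rangle$ and is therefore zero. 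This delivers the second formula of \equref{4}.

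Finally, the two consequences in \equref{41} follow with no further work: comparing the two lines of \equref{4} term-by-term gives $Q^{-}\phi_{m,n}=n\,Q^{-}\psi_{m,n}$, while applying \equref{4} twice (with $(m,n)\mapsto(m+1,n-1)$) expresses $(Q^{-})^{2}$ on each basis vector as a combination of $\phi_{m+2,n-2}$ and $\psi_{m+2,n-2}$ whose coefficients cancel in pairs, giving $(Q^{-})^{2}=0$. I would also verify the degenerate ranges ($n=0$, $n=1$, $m=0$, $n=p$): there the factors $n$, $n(n-1)$, $(n-1)(n-2)$ kill the problematic terms automatically and the conventions $\psi_{0,n}=\psi_{m,0}=0$ together with the truncation at $n=p$ in \thref{FockspstructPBF11} make the formulas persist, so no separate argument is needed.
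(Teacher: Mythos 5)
Your proposal is correct and follows essentially the same route as the paper's own proof: expanding $Q^{-}(f^{+})^{n}$ via \equref{3}, using $[b^{+},Q^{-}]=[b^{+},R^{+}]=0$, $Q^{-}|0\rangle=0$ and $\{Q^{-},R^{+}\}=(b^{+})^{2}$, killing the third term in the $\beta$-computation by $(R^{+})^{2}=0$, and then handling the boundary cases $m=0$, $n=0$, $n=1$ directly. The only difference is one of detail, not of method: where the paper simply states $Q^{-}|0\rangle=0$ and cites \equref{mixed} for the nilpotency $(R^{+})^{2}=0$, you supply short explicit derivations (from \equref{singleoutFock} and from $[R^{+},b^{+}]=0$, $\{R^{+},f^{+}\}=0$ respectively), both of which are valid.
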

\begin{proof}
Let us start with the action of $Q^{-}$ on the $\varphi_{m,n}\equiv|m,n,\alpha\rangle$ vectors:
$$
\begin{array}{c}
Q^{-}|m,n,\alpha\rangle=\underbrace{Q^{-}(f^{+})^{n}}(b^{+})^{m}|0\rangle= \\
    \\
=\underbrace{\big( (-f^{+})^{n}Q^{-}+n(-f^{+})^{n-1}b^{+}+n(n-1)(-f^{+})^{n-2}R^{+} \big)}(b^{+})^{m}|0\rangle=\\
    \\
=(-f^{+})^{n}Q^{-}(b^{+})^{m}|0\rangle+n(-f^{+})^{n-1}(b^{+})^{m+1}|0\rangle+n(n-1)(-f^{+})^{n-2}R^{+}(b^{+})^{m}|0\rangle= \\
    \\
\textrm{but taking into account that } \big[b^{+},Q^{-}\big]=0=\big[b^{+},R^{+}\big] \textrm{ and } Q^{-}|0\rangle=0 \textrm{ we get} \\
       \\
=n(-f^{+})^{n-1}(b^{+})^{m+1}|0\rangle+n(n-1)(-f^{+})^{n-2}(b^{+})^{m}R^{+}|0\rangle= \\
    \\
(-1)^{n-1}n|m+1,n-1,\alpha\rangle+(-1)^{n}n(n-1)|m+1,n-1,\beta\rangle
\end{array}
$$
for $m\geq1$, $n\geq1$.
A direct computation also yields that $Q^{-}|m,0,\alpha\rangle=0$ for $m\geq0$ and $Q^{-}|0,n,\alpha\rangle=(-1)^{n-1}n|1,n-1,\alpha \rangle+(-1)^{n}n(n-1)|1,n-1,\beta\rangle$ for $n\geq0$ which completes the proof of the first action formulae of \equref{4}.

Regarding now the second formulae of \equref{4}, i.e. the action of $Q^{-}$ on the $\psi_{m,n}\equiv|m,n,\beta\rangle$ vectors we get (considering
that $m\geq1$ and $n\geq2$)
$$
\begin{array}{c}
Q^{-}|m,n,\beta\rangle=\underbrace{Q^{-}(f^{+})^{n-1}}(b^{+})^{m-1}R^{+}|0\rangle=  \\
  \\
=\underbrace{\big( (-f^{+})^{n-1}Q^{-}+(n-1)(-f^{+})^{n-2}b^{+}+(n-1)(n-2)(-f^{+})^{n-3}R^{+} \big)}(b^{+})^{m-1}R^{+}|0\rangle= \\
    \\
=(-f^{+})^{n-1}Q^{-}(b^{+})^{m-1}R^{+}|0\rangle+(n-1)(-f^{+})^{n-2}(b^{+})^{m}R^{+}|0\rangle+  \\
(n-1)(n-2)(-f^{+})^{n-3}R^{+}(b^{+})^{m-1}R^{+}|0\rangle=  \\
   \\
\textrm{using again } \big[b^{+},R^{+}\big]=0=\big[b^{+},Q^{-}\big]   \\
   \\
=(-f^{+})^{n-1}(b^{+})^{m-1}Q^{-}R^{+}|0\rangle+(n-1)(-f^{+})^{n-2}(b^{+})^{m}R^{+}|0\rangle+  \\
(n-1)(n-2)(-f^{+})^{n-3}(b^{+})^{m-1}(R^{+})^{2}|0\rangle=   \\
   \\
\textrm{applying the nilpotency} (R^{+})^{2}=(\frac{1}{2}\{b^{\eta},f^{\eta}\})^{2}=0 \textrm{ (which can be directly }  \\
\textrm{ deduced from \equref{mixed}) we get the last summand of the above to be } 0 \textrm{ thus } \\
\end{array}
$$
$$
\begin{array}{c}
=(-f^{+})^{n-1}(b^{+})^{m-1}Q^{-}R^{+}|0\rangle+(n-1)(-f^{+})^{n-2}(b^{+})^{m}R^{+}|0\rangle=  \\
   \\
\textrm{ At this point we recall the relation } \{Q^{-},R^{+}\}=(b^{+})^{2} \textrm{ and } Q^{-}|0\rangle=0 \textrm{ thus}   \\
    \\
=(-f^{+})^{n-1}(b^{+})^{m-1}\big(-R^{+}Q^{-}+(b^{+}\big)^{2}\big)|0\rangle+(n-1)(-f^{+})^{n-2}(b^{+})^{m}R^{+}|0\rangle=  \\
  \\
=(-1)^{n-1}(f^{+})^{n-1}(b^{+})^{m+1}|0\rangle+(n-1)(-1)^{n}(f^{+})^{n-2}(b^{+})^{m}R^{+}|0\rangle=  \\
  \\
=(-1)^{n-1}|m+1,n-1,\alpha\rangle+(-1)^{n}(n-1)|m+1,n-1,\beta\rangle
\end{array}
$$
$m\geq1$ and $n\geq2$. Finally, a direct computation yields $Q^{-}|m,1,\beta\rangle=|m+1,0,\alpha\rangle$ for the $n=1$ case which completes the proof of the second one of relations \equref{4}. Recall that the $\psi_{m,n}\equiv|m,n,\beta\rangle$ vectors vanish for either $m=0$ or $n=0$.
\end{proof}
Again using \equref{1} we get the following
\begin{lemma}
We have the following relations
\begin{equation}   \eqlabel{5}
Q^{+} (b^{+})^{m} = (b^{+})^{m} Q^{+} + \sum_{i=1}^{m} (b^{+})^{i-1} f^{+} (b^{+})^{m-i}
\end{equation}
for all positive integers $m \geq 1$ and
\begin{equation}   \eqlabel{6}
Q^{+} (b^{+})^{m} = (b^{+})^{m} Q^{+} + \frac{( 1 - (-1)^{m})}{2} f^{+} (b^{+})^{m-1} + 2 \big( \lfloor \frac{m-2}{2} \rfloor + 1 \big) (b^{+})^{m-2} R^{+}
\end{equation}
for all positive integers $m \geq 2$. The notation $\lfloor x \rfloor$ stands for the integer part of $x$.
\end{lemma}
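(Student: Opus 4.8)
The plan is to prove \equref{5} first by a direct induction on $m$, and then to obtain the closed form \equref{6} by simplifying the sum on the right-hand side of \equref{5} with the help of a commutation identity for $f^{+}$ and powers of $b^{+}$. For \equref{5}, the base case $m=1$ is exactly the relation $Q^{+}b^{+}=b^{+}Q^{+}+f^{+}$, i.e. $[b^{+},Q^{+}]=-f^{+}$, read off from \equref{1}. For the inductive step I would multiply the assumed identity for $m$ on the right by $b^{+}$, push the trailing $Q^{+}b^{+}$ through that same basic relation, and check that the extra term $(b^{+})^{m}f^{+}$ is precisely the $i=m+1$ summand; this closes the induction with no further input.

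For \equref{6}, the key auxiliary step is the commutation rule
\[
f^{+}(b^{+})^{k}=(-1)^{k}(b^{+})^{k}f^{+}+\bigl(1-(-1)^{k}\bigr)(b^{+})^{k-1}R^{+},
\]
valid for all $k\geq 1$. I would prove this by induction on $k$, using only two facts: the defining relation $\{f^{+},b^{+}\}=2R^{+}$ from \equref{defoper} (so that $f^{+}b^{+}=2R^{+}-b^{+}f^{+}$), which settles $k=1$, and the relation $[b^{+},R^{+}]=0$, which follows from $\big[\{f^{+},b^{+}\},b^{+}\big]=0$ in \equref{mixed}. In the inductive step the coefficient of the $R^{+}$-term collects as $2(-1)^{k}+\bigl(1-(-1)^{k}\bigr)=1+(-1)^{k}=1-(-1)^{k+1}$, which is exactly what is needed. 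Note that the parafermionic term survives only for odd $k$, while for even $k$ the generator $f^{+}$ commutes with $(b^{+})^{k}$.

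Finally I would substitute this rule (with $k=m-i$) into the sum $\sum_{i=1}^{m}(b^{+})^{i-1}f^{+}(b^{+})^{m-i}$ appearing in \equref{5}. Each summand becomes $(-1)^{m-i}(b^{+})^{m-1}f^{+}+\bigl(1-(-1)^{m-i}\bigr)(b^{+})^{m-2}R^{+}$, the $R^{+}$ being moved freely to the right since $[b^{+},R^{+}]=0$, and summing over $i$ reduces everything to two elementary alternating sums, $\sum_{i=1}^{m}(-1)^{m-i}=\tfrac{1}{2}\bigl(1-(-1)^{m}\bigr)$ and $\sum_{i=1}^{m}\bigl(1-(-1)^{m-i}\bigr)=m-\tfrac{1}{2}\bigl(1-(-1)^{m}\bigr)$. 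A short parity check of the last expression against even and odd $m$ identifies it with $2\bigl(\lfloor\tfrac{m-2}{2}\rfloor+1\bigr)$, producing exactly the coefficients in \equref{6}. The only point needing a word of care is the ordering in the first term: the computation naturally yields $(b^{+})^{m-1}f^{+}$, but its coefficient $\tfrac{1}{2}\bigl(1-(-1)^{m}\bigr)$ vanishes unless $m$ is odd, and for odd $m$ the exponent $m-1$ is even, so $f^{+}(b^{+})^{m-1}=(b^{+})^{m-1}f^{+}$ and the two orderings agree. I expect this bookkeeping of signs, together with the parity identification of the floor expression, to be the only mildly delicate part; everything else is routine once the commutation rule is in hand.
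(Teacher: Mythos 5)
Your proof is correct, and for \equref{6} it takes a genuinely different route from the paper's. The induction for \equref{5} is exactly the paper's argument (base case $Q^{+}b^{+}=b^{+}Q^{+}+f^{+}$ from \equref{1}, right-multiplication by $b^{+}$ in the step). For \equref{6}, however, the paper does not isolate a commutation rule: it works directly on the sum in \equref{5}, splitting it by the parity of $i$, sliding $f^{+}$ through even powers of $b^{+}$ via $[(b^{+})^{2},f^{+}]=0$ (one of the relations \equref{mixed}), applying $b^{+}f^{+}=2R^{+}-f^{+}b^{+}$ once in each even-$i$ summand, counting the even indices via $\sum_{i=2,\ i\,\mathrm{even}}^{m}1=\lfloor\frac{m-2}{2}\rfloor+1$, and collecting the surviving $f^{+}(b^{+})^{m-1}$ terms into the alternating sum $1-\sum_{i=2}^{m}(-1)^{i}$. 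Your closed-form identity $f^{+}(b^{+})^{k}=(-1)^{k}(b^{+})^{k}f^{+}+\bigl(1-(-1)^{k}\bigr)(b^{+})^{k-1}R^{+}$ packages all of that bookkeeping into a single inductively proved statement; it is precisely the $b$-side analogue of the identities \equref{2}--\equref{3} that the paper establishes and uses on the $Q^{-}$ side, so your argument is more uniform with the paper's treatment of $Q^{-}$ and yields a reusable identity, at the cost of a second induction. Your parity identification $m-\frac{1}{2}\bigl(1-(-1)^{m}\bigr)=2\bigl(\lfloor\frac{m-2}{2}\rfloor+1\bigr)$ checks out in both parities, and your observation that the ordering discrepancy between $(b^{+})^{m-1}f^{+}$ and $f^{+}(b^{+})^{m-1}$ is harmless (the coefficient is nonzero only for odd $m$, where $m-1$ is even and $f^{+}$ commutes with $(b^{+})^{m-1}$) is exactly right and is a point the paper sidesteps by keeping $f^{+}$ on the left throughout. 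One cosmetic remark: your rule is stated for $k\geq 1$, but substituting $k=m-i$ makes the $i=m$ term hit $k=0$, where $(b^{+})^{k-1}$ is formally undefined; either note that the vanishing coefficient $1-(-1)^{0}=0$ makes the formula degenerate correctly to $f^{+}=f^{+}$, or treat the $i=m$ summand separately. This does not affect the validity of the argument.
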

\begin{proof}
\equref{5} can be proved by induction on $m$.   \\
\equref{6} is shown using \equref{5} and $[(b^{+})^{2},f^{+}]=0$ (which can be found among relations \equref{mixed}):
$$
\begin{array}{c}
Q^{+}(b^{+})^{m}=(b^{+})^{m}Q^{+}+\displaystyle\sum_{i=1}^{m}(b^{+})^{i-1}f^{+}(b^{+})^{m-i}= \\
% \\
\textrm{Considering } m\geq2 \textrm{ from now on we get} \\
% \\
=(b^{+})^{m}Q^{+}+f^{+}(b^{+})^{m-1}+\displaystyle\sum^m_{\substack{i=2 \\
                                                       i:even}}\underbrace{b^{+}f^{+}}(b^{+})^{m-2}+\sum_{\substack{i=2 \\
                                                                                                       i:odd}}^{m}f^{+}(b^{+})^{m-1}= \\
=(b^{+})^{m}Q^{+}+f^{+}(b^{+})^{m-1}+\displaystyle\sum^m_{\substack{i=2 \\
                                                       i:even}}(\underbrace{2R^{+}-f^{+}b^{+}})(b^{+})^{m-2}+\sum_{\substack{i=2 \\
                                                                                                       i:odd}}^{m}f^{+}(b^{+})^{m-1}= \\
=(b^{+})^{m}Q^{+}+f^{+}(b^{+})^{m-1}+2\Big(\displaystyle\sum^m_{\substack{i=2 \\
                                                                          i:even}}1\Big)R^{+}(b^{+})^{m-2}-\displaystyle\sum^m_{\substack{i=2 \\
                                                                                           i:even}}f^{+}(b^{+})^{m-1}
                                                                                           +\sum_{\substack{i=2 \\
                                                                                                            i:odd}}^{m}f^{+}(b^{+})^{m-1}= \\
=(b^{+})^{m}Q^{+}+f^{+}(b^{+})^{m-1}+2\big(\lfloor\frac{m-2}{2}\rfloor+1\big)R^{+}(b^{+})^{m-2}-\displaystyle\sum_{i=2}^{m}(-1)^{i}f^{+}(b^{+})^{m-1}= \\
=(b^{+})^{m}Q^{+}+\Big(1-\displaystyle\sum_{i=2}^{m}(-1)^{i}\Big)f^{+}(b^{+})^{m-1}+2\big(\lfloor\frac{m-2}{2}\rfloor+1\big)R^{+}(b^{+})^{m-2}  \\
\textrm{finally taking into account that } \big[b^{+},R^{+}\big]=0 \textrm{ we get } \\
   \\
Q^{+}(b^{+})^{m}=(b^{+})^{m}Q^{+}+\frac{( 1 - (-1)^{m})}{2}f^{+}(b^{+})^{m-1}+2\big(\lfloor\frac{m-2}{2}\rfloor+1\big)(b^{+})^{m-2}R^{+}
\end{array}
$$
\end{proof}
In the above given proof we have used $\displaystyle\sum^m_{\substack{i=2 \\
                                                                          i:even}}1=\lfloor\frac{m-2}{2}\rfloor+1$
where the notation $\lfloor x \rfloor$ stands for the floor function or the integer part of $x$ defined as
$$
\textrm{for any } x\in\mathbb{R} \textrm{ \ } \lfloor x \rfloor \textrm{ is the greatest integer not exceeding } x
$$
Finally, we note that for the special case for which $m=1$, \equref{5} becomes $Q^{+}b^{+}=b^{+}Q^{+}+f^{+}$ which is simply a rewriting of the corresponding relation of \equref{1}.

Utilizing the above relations, together with $\{Q^{+},R^{+}\}=0$ we get the action of the $Q^{+}$ operator on the basis elements of $\bigoplus_{n=0}^{p} \bigoplus_{m=0}^{\infty} \mathcal{V}_{m,n}$
{\small
\begin{proposition}   \prlabel{Q+action}
We have the following relations
\begin{equation}  \eqlabel{7}
\begin{array}{c}
\mathbf{Q^{+}|m,n,\alpha\rangle=\frac{\big((-1)^{n}-(-1)^{n+m}\big)}{2}|m-1,n+1,\alpha\rangle+
2(-1)^{n}\big(\lfloor\frac{m-2}{2}\rfloor+1\big)|m-1,n+1,\beta\rangle}= \\  \\
=\left\{%
\begin{array}{c}
(-1)^nm| m-1,n+1,\beta\rangle, \ \ \ \ \ \ \ \ \ \ \ \ \ \ \ \ \ \ \ \ \ \ \ \ \ \ \ \ \ \ \ \ \ \ \underline{m:even}   \\
(-1)^n\big(|m-1,n+1,\alpha\rangle+(m-1)|m-1,n+1,\beta\rangle\big), \  \underline{m:odd}
\end{array}
\right.
\\ \\  \\
\mathbf{Q^{+}|m,n,\beta\rangle= \frac{\big((-1)^{n-1}-(-1)^{n+m}\big)}{2}|m-1,n+1,\beta\rangle}= \\ \\
=\left\{%
\begin{array}{c}
(-1)^{(n-1)}|m-1,n+1,\beta\rangle, \ \underline{m:even}   \\
0, \ \ \ \ \ \ \ \ \ \ \ \ \ \ \ \ \ \ \ \ \ \ \ \ \ \ \ \ \ \ \ \underline{m:odd}
\end{array}
\right.
\end{array}
\end{equation}
for $0 \leq m$ and $0 \leq n \leq p$. Note that the above action-formulae imply
\begin{equation}  \eqlabel{41}
\mathbf{\big(Q^{+}\big)^{2}=0}
\end{equation}
verifying thus the (expected) nilpotency in the action of $Q^{+}$.
\end{proposition}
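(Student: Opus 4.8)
The plan is to mirror the proof of \prref{Q-action}, pushing $Q^{+}$ through the creation operators that define the basis vectors \equref{subspbase} and collecting the surviving terms. Two elementary facts drive everything. First, the relation $\{f^{+},Q^{+}\}=0$ from \equref{1} says that $Q^{+}$ anticommutes with $f^{+}$, hence $Q^{+}(f^{+})^{n}=(-1)^{n}(f^{+})^{n}Q^{+}$ for every $n\geq 0$. Second, $Q^{+}$ annihilates the vacuum: since $Q^{+}=\tfrac{1}{2}\{b^{-},f^{+}\}$ and both $b^{-}|0\rangle=0$ and $b^{-}f^{+}|0\rangle=0$ (the latter being one of the singling-out conditions \equref{singleoutFock}), we get $Q^{+}|0\rangle=0$.

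With these in hand I would treat $Q^{+}|m,n,\alpha\rangle=Q^{+}(f^{+})^{n}(b^{+})^{m}|0\rangle$ by first moving $Q^{+}$ across $(f^{+})^{n}$ at the cost of $(-1)^{n}$, and then across $(b^{+})^{m}$ using \equref{6}. Because $Q^{+}|0\rangle=0$, the term $(b^{+})^{m}Q^{+}|0\rangle$ drops out, leaving
\[
Q^{+}|m,n,\alpha\rangle=(-1)^{n}(f^{+})^{n}\Big(\tfrac{1-(-1)^{m}}{2}\,f^{+}(b^{+})^{m-1}+2\big(\lfloor\tfrac{m-2}{2}\rfloor+1\big)(b^{+})^{m-2}R^{+}\Big)|0\rangle .
\]
By the very definitions in \equref{subspbase}, the first summand is $(f^{+})^{n+1}(b^{+})^{m-1}|0\rangle=|m-1,n+1,\alpha\rangle$ and the second is $(f^{+})^{n}(b^{+})^{m-2}R^{+}|0\rangle=|m-1,n+1,\beta\rangle$. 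Pulling the $(-1)^{n}$ inside reproduces the coefficients $\tfrac{(-1)^{n}-(-1)^{n+m}}{2}$ and $2(-1)^{n}(\lfloor\tfrac{m-2}{2}\rfloor+1)$, and the parity split then follows from $\lfloor\tfrac{m-2}{2}\rfloor+1=\tfrac{m}{2}$ for even $m$ and $=\tfrac{m-1}{2}$ for odd $m$, together with $\tfrac{1-(-1)^{m}}{2}$ being $0$ or $1$ accordingly.

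For $Q^{+}|m,n,\beta\rangle=Q^{+}(f^{+})^{n-1}(b^{+})^{m-1}R^{+}|0\rangle$ the same two moves apply, but now the trailing $R^{+}|0\rangle$ lets me discard two of the three terms produced by \equref{6}: the $Q^{+}$-term dies because $\{Q^{+},R^{+}\}=0$ and $Q^{+}|0\rangle=0$, while the $(b^{+})^{m-3}R^{+}$-term dies because $(R^{+})^{2}=0$ (the same nilpotency invoked in \prref{Q-action}). Only the middle term survives, yielding $\tfrac{(-1)^{n-1}-(-1)^{n+m}}{2}\,|m-1,n+1,\beta\rangle$, which is the asserted formula. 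The degenerate indices must be checked by hand since \equref{6} requires $m\geq 2$: for $m=0$ both $Q^{+}(f^{+})^{n}|0\rangle$ and the claimed coefficients vanish, and for $m=1$ I would use instead $Q^{+}b^{+}=b^{+}Q^{+}+f^{+}$ from \equref{5}. One should also note that the right-hand sides vanish consistently when $n=p$, because $(f^{+})^{p+1}|0\rangle=0$ in the Fock space of \thref{FockspstructPBF11}.

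Finally, $(Q^{+})^{2}=0$ can be read directly off these formulas (two applications always land in a subspace where the coefficients cancel), but it is cleaner to derive it purely algebraically from \equref{1}: using $[b^{-},Q^{+}]=0$ and $\{f^{+},Q^{+}\}=0$ one commutes $Q^{+}$ to the right in $(Q^{+})^{2}=Q^{+}\cdot\tfrac{1}{2}\{b^{-},f^{+}\}$ and finds $(Q^{+})^{2}=-(Q^{+})^{2}$, forcing $(Q^{+})^{2}=0$. The conceptual content is thus light; the only real hazard is the bookkeeping—matching the floor-function coefficients to their closed forms and keeping the signs straight through the parity cases and the degenerate boundary indices.
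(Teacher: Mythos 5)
Your proposal is correct and follows essentially the same route as the paper's own proof: anticommute $Q^{+}$ past $(f^{+})^{n}$ via $\{f^{+},Q^{+}\}=0$, push it through $(b^{+})^{m}$ with \equref{6}, kill the unwanted terms on the $\beta$-vectors using $\{Q^{+},R^{+}\}=0$, $Q^{+}|0\rangle=0$ and $(R^{+})^{2}=0$, and check the degenerate indices $m=0,1$ by hand, with your purely algebraic derivation of $(Q^{+})^{2}=0$ from \equref{1} being a harmless strengthening of the paper's remark that nilpotency follows from the action formulae. Incidentally, your $m=1$ boundary value $Q^{+}|1,n,\alpha\rangle=(-1)^{n}|0,n+1,\alpha\rangle$, obtained from $Q^{+}b^{+}=b^{+}Q^{+}+f^{+}$, agrees with the displayed formula \equref{7} at $m=1$, whereas the paper's proof misprints this direct computation with an extra factor of $2$.
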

}
\begin{proof}
In order to compute the $Q^{+}$ action on the $\phi_{m,n}=|m,n,\alpha\rangle$ vectors, we start by applying $\{Q^{+},f^{+}\}=0$ and then \equref{6} successively:
{\small
$$
\begin{array}{c}
Q^{+}|m,n,\alpha\rangle=\overbrace{Q^{+}(f^{+})^{n}}(b^{+})^{m}|0\rangle=\overbrace{(-1)^{n}(f^{+})^{n}Q^{+}}(b^{+})^{m}|0\rangle=
(-1)^{n}(f^{+})^{n}\underbrace{Q^{+}(b^{+})^{m}}|0\rangle= \\
  \\
=(-1)^{n}(f^{+})^{n}\underbrace{\Big((b^{+})^{m}Q^{+}+\frac{(1-(-1)^{m})}{2}f^{+}(b^{+})^{m-1}+2\big(\lfloor\frac{m-2}{2}\rfloor+1\big)(b^{+})^{m-2} R^{+}\Big)}|0\rangle= \\
   \\
  \textrm{taking into account that } Q^{+}|0\rangle=0 \textrm{ we get}  \\
  \\
=(-1)^{n}\frac{(1-(-1)^{m})}{2}(f^{+})^{n+1}(b^{+})^{m-1}|0\rangle+2(-1)^{n}\big(\lfloor\frac{m-2}{2}\rfloor+1\big)(f^{+})^{n}(b^{+})^{m-2} R^{+}|0\rangle=  \\
   \\
=\frac{((-1)^{n}-(-1)^{n+m})}{2}|m-1,n+1,\alpha\rangle+2(-1)^{n}\big(\lfloor\frac{m-2}{2}\rfloor+1\big)|m-1,n+1,\beta\rangle
\end{array}
$$
}
for $m\geq2$ and $n\geq1$. A direct computation now yields that $Q^{+}|1,n,\alpha\rangle=2(-1)^{n}|0,n+1,\alpha\rangle$ and $Q^{+}|0,n,\alpha\rangle=0$ for $n\geq0$ and $Q^{+}|m,0,\alpha\rangle=\frac{\big(1-(-1)^{m}\big)}{2}|m-1,1,\alpha\rangle+2\big(\lfloor\frac{m-2}{2}\rfloor+1\big)|m-1,1,\beta\rangle$ for $m\geq0$ completing thus the proof of the first of the action formulae \equref{7}.

Regarding now the $Q^{+}$ action on the $\psi_{m,n}=|m,n,\beta\rangle$ vectors, the computation proceeds as follows:
{\small
$$
\begin{array}{c}
Q^{+}|m,n,\beta\rangle=\overbrace{Q^{+}(f^{+})^{n-1}}(b^{+})^{m-1}R^{+}|0\rangle= \\
   \\
=\overbrace{(-1)^{n-1}(f^{+})^{n-1}Q^{+}}(b^{+})^{m-1}R^{+}|0\rangle=(-1)^{n-1}(f^{+})^{n-1}\underbrace{Q^{+}(b^{+})^{m-1}}R^{+}|0\rangle= \\
  \\
=(-1)^{n-1}(f^{+})^{n-1}\underbrace{\Big((b^{+})^{m-1}Q^{+}+\frac{(1-(-1)^{m-1})}{2}f^{+}(b^{+})^{m-2}+} \\
\underbrace{+2\big(\lfloor\frac{m-3}{2}\rfloor+1\big)(b^{+})^{m-3} R^{+}\Big)}R^{+}|0\rangle=  \\
  \\
\textrm{Using the nilpotency } (R^{+})^{2}=0 \textrm{ (which is a direct consequence of the algebraic} \\
\textrm{relations \equref{mixed}), together with } \{Q^{+},R^{+}\}=0 \textrm{ and } Q^{+}|0\rangle=0 \textrm{ the first and the third } \\
\textrm{summands of the above parenthesis are annihilated and only the middle-term survives:}  \\
  \\
=\frac{((-1)^{n-1}-(-1)^{n+m-2})}{2}(f^{+})^{n}(b^{+})^{m-2}R^{+}|0\rangle=  \\
  \\
=\frac{((-1)^{n-1}-(-1)^{n+m})}{2}|m-1,n+1,\beta\rangle
\end{array}
$$
}
for $m\geq2$ and $n\geq1$. Finally, we can directly verify that $Q^{+}|1,n,\beta\rangle=0$ completing thus the proof of the second one of \equref{7}. Recall that the $\psi_{m,n}\equiv|m,n,\beta\rangle$ vectors vanish for either $m=0$ or $n=0$.
\end{proof}
Let us now recall the relations:
\begin{equation}  \eqlabel{8}
\begin{array}{ccccc}
\big[ N_{b}, f^{+} \big] = 0  &  & , &  &  \big[ N_{f}, b^{+} \big] = 0
\end{array}
\end{equation}
which are clearly a rewriting of the corresponding relations from \equref{mixed} and \equref{pure}. Now we can get
\begin{equation}  \eqlabel{9}
\begin{array}{ccccc}
\big[ N_{b}, (b^{+})^{n} \big] = n(b^{+})^{n}  &  & , &  & \big[ N_{f}, (f^{+})^{n} \big] = n(f^{+})^{n}  \\
\end{array}
\end{equation}
Relations \equref{9} can be inductively proved from relations \equref{8}. Combining these last relations together with the relations $[N_{b}, R^{+}] = [N_{f}, R^{+}] = R^{+}$ we get the following
\begin{proposition}   \prlabel{NbNfaction}
We have the following relations regarding the action of the $N_{b}$, $N_{f}$ operators
\begin{equation}   \eqlabel{10}
\begin{array}{cccc}
\mathbf{N_{b}|m,n,\alpha\rangle=m|m,n,\alpha\rangle}, &  & & \mathbf{N_{b}|m,n,\beta\rangle=m|m,n,\beta\rangle}   \\
    \\
\mathbf{N_{f}|m,n,\alpha\rangle=n|m,n,\alpha\rangle}, &  & & \mathbf{N_{f}|m,n,\beta\rangle=n|m,n,\beta\rangle}
\end{array}
\end{equation}
for $0 \leq m$ and $0 \leq n \leq p$. In other words the vectors $|m, n, \alpha \rangle$, $|m, n, \beta \rangle$ described in \thref{FockspstructPBF11} for all values of $0 \leq m$ and $0 \leq n \leq p$ are eigenvectors of the $N_{b}$, $N_{f}$ operators.
\end{proposition}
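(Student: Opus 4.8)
The plan is to prove both eigenvalue equations by a direct computation that pushes each operator to the right through the string of creation operators defining the basis vectors, reducing everything to the action of $N_{b}$ and $N_{f}$ on the vacuum. All the needed tools are already available: the commutation relations \equref{9}, i.e. $[N_{b},(b^{+})^{n}]=n(b^{+})^{n}$ and $[N_{f},(f^{+})^{n}]=n(f^{+})^{n}$; the vanishing commutators $[N_{b},(f^{+})^{n}]=0$ and $[N_{f},(b^{+})^{n}]=0$ which follow by iterating \equref{8}; and the relations $[N_{b},R^{+}]=[N_{f},R^{+}]=R^{+}$ recalled just above. The first thing I would establish is the base case $N_{b}|0\rangle=N_{f}|0\rangle=0$. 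This is immediate from the Fock conditions of \prref{uniquenessofFocksp}: since $b^{-}|0\rangle=f^{-}|0\rangle=0$ while $b^{-}b^{+}|0\rangle=f^{-}f^{+}|0\rangle=p|0\rangle$, we get $N_{b}|0\rangle=\tfrac{1}{2}\{b^{+},b^{-}\}|0\rangle-\tfrac{p}{2}|0\rangle=\tfrac{1}{2}b^{-}b^{+}|0\rangle-\tfrac{p}{2}|0\rangle=0$, and likewise $N_{f}|0\rangle=\tfrac{1}{2}[f^{+},f^{-}]|0\rangle+\tfrac{p}{2}|0\rangle=-\tfrac{1}{2}f^{-}f^{+}|0\rangle+\tfrac{p}{2}|0\rangle=0$.

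For $\phi_{m,n}=(f^{+})^{n}(b^{+})^{m}|0\rangle$ the computation is short. To act with $N_{b}$ I commute it past $(f^{+})^{n}$ (no contribution), then past $(b^{+})^{m}$ (picking up the factor $m$), and annihilate the surviving $N_{b}$ on the vacuum, obtaining $N_{b}\phi_{m,n}=m\phi_{m,n}$. Symmetrically, $N_{f}$ picks up $n$ when it crosses $(f^{+})^{n}$ and then commutes freely through $(b^{+})^{m}$ to hit the vacuum, giving $N_{f}\phi_{m,n}=n\phi_{m,n}$. For $\psi_{m,n}=(f^{+})^{n-1}(b^{+})^{m-1}R^{+}|0\rangle$ the same strategy applies, except the operator must also pass the trailing $R^{+}$. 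This is precisely where the relations $[N_{b},R^{+}]=[N_{f},R^{+}]=R^{+}$ are decisive: since $N_{b}|0\rangle=0$ we have $N_{b}R^{+}|0\rangle=R^{+}|0\rangle$, and similarly $N_{f}R^{+}|0\rangle=R^{+}|0\rangle$. Pushing $N_{b}$ through $(f^{+})^{n-1}$ (nothing), through $(b^{+})^{m-1}$ (contributing $m-1$), and finally through $R^{+}$ (contributing $1$), the two contributions sum to $m$; the analogous count for $N_{f}$ gives $(n-1)+1=n$, so $N_{b}\psi_{m,n}=m\psi_{m,n}$ and $N_{f}\psi_{m,n}=n\psi_{m,n}$.

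Finally I would dispose of the boundary values covered by the statement. When $\psi_{m,n}$ is the zero vector (the cases $m=0$ or $n=0$) both eigenvalue equations hold trivially, and in the degenerate case $n=p$, where $\psi_{m,p}$ is proportional to $\phi_{m,p}$, the eigenvalues are automatically consistent with those already computed for $\phi_{m,p}$. There is no genuine obstacle here; the argument is entirely routine. The only point demanding mild care is the bookkeeping in the $\psi$ computation, where one must correctly add the contribution from the string of creation operators to the extra unit produced by $R^{+}$, invoking $[N_{b},R^{+}]=[N_{f},R^{+}]=R^{+}$ exactly at the step where $R^{+}$ reaches the vacuum.
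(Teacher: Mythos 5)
Your proof is correct and follows essentially the same route as the paper's: pushing $N_{b}$ and $N_{f}$ through the creation-operator strings via $[N_{b},(b^{+})^{n}]=n(b^{+})^{n}$, $[N_{f},(f^{+})^{n}]=n(f^{+})^{n}$, the vanishing mixed commutators from \equref{8}, and $[N_{b},R^{+}]=[N_{f},R^{+}]=R^{+}$, then annihilating on the vacuum. Your explicit verification of $N_{b}|0\rangle=N_{f}|0\rangle=0$ from the conditions of \prref{uniquenessofFocksp}, and your treatment of the boundary cases, are minor additions of detail that the paper leaves implicit.
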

\begin{proof}
We will only prove the fourth of the above relations. The rest of the relations can be treated similarly:
$$
\begin{array}{c}
N_{f}|m,n,\beta\rangle=\underbrace{N_{f}(f^{+})^{n-1}}(b^{+})^{m-1}R^{+}|0\rangle= \\
  \\
=\underbrace{\big((f^{+})^{n-1}N_{f}+(n-1)(f^{+})^{n-1}\big)}(b^{+})^{m-1}R^{+}|0\rangle=
\end{array}
$$
$$
\begin{array}{c}=(f^{+})^{n-1}N_{f}(b^{+})^{m-1}R^{+}|0\rangle+(n-1)(f^{+})^{n-1}(b^{+})^{m-1}R^{+}|0\rangle=  \\
   \\
=(f^{+})^{n-1}(b^{+})^{m-1}N_{f}R^{+}|0\rangle+(n-1)(f^{+})^{n-1}(b^{+})^{m-1}R^{+}|0\rangle=    \\
    \\
\textrm{applying } \big[N_{f},R^{+}\big]=R^{+} \textrm{ and } N_{f}|0\rangle=0 \textrm{ in the first summand} \\
   \\
=(f^{+})^{n-1}(b^{+})^{m-1}R^{+}|0\rangle+(n-1)(f^{+})^{n-1}(b^{+})^{m-1}R^{+}|0\rangle= \\
 \\
|m,n,\beta\rangle+(n-1)|m,n,\beta\rangle=n|m,n,\beta\rangle
\end{array}
$$
which completes the proof.
\end{proof}
Before closing this paragraph let us mention that all the computational results of the present section i.e. all the formulae of \prref{Q-action} describing the $Q^{-}$ action, the formulae of \prref{Q+action} describing the $Q^{+}$ action and the formulae of \prref{NbNfaction} describing the action of $N_{b}, N_{f}$ together with \equref{equat3} describing the action of the $P_{BF}$ generators, apart from having been proved analytically in the text, have also been checked and verified with the help of the \textsc{Quantum} \cite{GMFD} add-on for Mathematica 7.0: This is an add-on permitting symbolic algebraic computations between operators satisfying a given set of relations, including the definition and use of a kind of generalized Dirac notation for the vectors of the carrier space. What we have verified with the aid of the above package is that all the action formulae displayed in the text always preserve the corresponding algebraic relations. We should also mention that the formulae of \prref{Q-action}, \prref{Q+action} and \prref{NbNfaction} could have also been proved by a direct application of the formulae \equref{equat3} which describe explicitly the action of the generators of $P_{BF}^{(1,1)}$ on the carrier spaces $\bigoplus_{n=0}^{p} \bigoplus_{m=0}^{\infty}\mathcal{V}_{m,n}$ (for any positive integer $p$).

\paragraph{$\bullet$ \textbf{Construction of the Lie superalgebra representation:}}

In this paragraph, we shall consider a special case of the paraparticle realizations constructed in \prref{parapartrealiz}. We will consider the case in which the carrier (super) space $V = V_{0} \oplus V_{1}$ of the (super) representation $P: \mathbb{U}(L) \rightarrow \mathcal{E}nd_{gr}(V)$ is 2-dimensional.
So, let $L=L_{0} \oplus L_{1}$ be any complex Lie superalgebra of either finite or infinite dimension and let $V = V_{0} \oplus V_{1}$ be a 2-dimensional, complex, super-vector space i.e. $dim_{\mathbb{C}}V_{0} = dim_{\mathbb{C}}V_{1} = 1$. If $V$ is the carrier space for a super-representation (or: a $\mathbb{Z}_{2}$-graded representation) of $L$, this is equivalent to the existence of an homomorphism $P: \mathbb{U}(L) \rightarrow \mathcal{E}nd_{gr}(V)$ of associative superalgebras, from $\mathbb{U}(L)$ to the algebra $\mathcal{E}nd_{gr}(V)$ of $\mathbb{Z}_{2}$-graded linear maps on $V$. Thus, for any homogeneous element $z \in L$ the image $P(z)$ will be a  $2 \times 2$ matrix of the form
{\small
\begin{equation}    \eqlabel{2dsuperepr}
\begin{array}{ccc}
 P(z)= \left(\begin{array}{c|c}
              A(z) & B(z) \\ \hline
              C(z) & D(z)
             \end{array}\right)    &  \begin{array}{c}
                                      \nearrow  \\ \\
                                      \searrow
                                      \end{array}   & \begin{array}{c}
                                                      P(X)=
                                                            \left(\begin{array}{ccc}
                                                            A(X) & 0 \\
                                                            0 & D(X)
                                                            \end{array}\right)
                                                            \ \ (if \ z = X \rightsquigarrow \underline{even})
                                                            \\  \\  \\
                                                      P(Y) =
                                                            \left(\begin{array}{ccc}
                                                            0 & B(Y) \\
                                                            C(Y) & 0
                                                            \end{array}\right)
                                                           \ \ (if \ z = Y \rightsquigarrow \underline{odd})
                                                      \end{array}
\end{array}
\end{equation}
}
where $A(X)$, $B(Y)$, $C(Y)$, $D(X)$ are complex numbers (for any choice $X$ and $Y$ of even and odd respectively elements of $L$).

The relations \equref{JordgenPBFeqeven}, \equref{JordgenPBFeqodd} of \prref{parapartrealiz} will be written
%{\small
\begin{equation} \eqlabel{JordgenPBFeqeven2d}
\begin{array}{c}
J_{P_{BF}}(X_{i}) = \frac{1}{2} A(X_{i}) \{ b^{+}, b^{-} \} + \frac{1}{2} D(X_{i}) [ f^{+}, f^{-} ] = \\ \\
\ \ \ \ \ \ =A(X_{i}) N_{b} + D(X_{i}) N_{f} + \big( A(X_{i}) - D(X_{i}) \big) \frac{p}{2}
\end{array}
\end{equation}
%}
for any even element ($Z=X_{i}$) of an homogeneous basis of $L$ and by
{\small
\begin{equation} \eqlabel{JordgenPBFeqodd2d}
J_{P_{BF}}(Y_{j})=\frac{1}{2}\Big(B(Y_{j})\{ b^{+},f^{-}\}+C(Y_{j})\{f^{+},b^{-}\}\Big)=B(Y_{j}) Q^{-} + C(Y_{j}) Q^{+}
\end{equation}
}
for any odd element ($Z=Y_{j}$) of an homogeneous basis of $L$.

Now we have the following proposition
\begin{proposition}   \prlabel{finalformulaerepresL}
Any complex Lie superalgebra $\mathbb{L}$ (of either finite or infinite dimension) possesing a 2-dimensional super (i.e.: a $\mathbb{Z}_{2}$-graded) representation of the form \equref{2dsuperepr} has also a family of infinite dimensional representations with carrier spaces
$$
\bigoplus_{n=0}^{p} \bigoplus_{m=0}^{\infty} \mathcal{V}_{m,n}
$$
the (infinite) dimensional vector spaces described in \thref{FockspstructPBF11}, and actions given by:
{\small
\begin{equation} \eqlabel{Liesuperalgebrarepresentation}
\boxed{
\begin{array}{c}
\bullet \mathbf{X_{i} \triangleright | m, n, \alpha \rangle} = \Big( \big( m + \frac{p}{2} \big)A(X_{i}) + \big( n - \frac{p}{2} \big)D(X_{i})  \Big) \mathbf{| m, n, \alpha \rangle}    \\
   \\
\bullet \mathbf{X_{i} \triangleright | m, n, \beta \rangle} = \Big( \big( m + \frac{p}{2} \big)A(X_{i}) + \big( n - \frac{p}{2} \big)D(X_{i})  \Big) \mathbf{| m, n, \beta \rangle}   \\
   \\
\bullet \mathbf{Y_{j} \triangleright | m, n, \alpha \rangle} = B(Y_{j}) \Big( (-1)^{n-1} n \mathbf{| m+1, n-1, \alpha \rangle} + (-1)^{n}n(n-1)\mathbf{| m+1, n-1, \beta \rangle} \Big) +  \\
 + C(Y_{j}) \Big( \frac{\big( (-1)^{n} - (-1)^{n+m} \big)}{2} \mathbf{| m-1, n+1, \alpha \rangle} + 2(-1)^{n}\big( \lfloor \frac{m-2}{2} \rfloor + 1 \big) \mathbf{| m-1, n+1, \beta \rangle} \Big)   \\
   \\
\bullet \mathbf{Y_{j} \triangleright | m, n, \beta \rangle} = B(Y_{j}) \Big( (-1)^{n-1} \mathbf{| m+1, n-1, \alpha \rangle} + (-1)^{n} (n-1) \mathbf{| m+1, n-1, \beta \rangle} \Big) +   \\
+ C(Y_{j}) \Big( \frac{\big( (-1)^{n-1} - (-1)^{n+m-2} \big)}{2} \mathbf{| m-1, n+1, \beta \rangle} \Big)
\end{array}
}
\end{equation}
}
for $0 \leq m$ and $0 \leq n \leq p$. In other words the vectors $|m, n, \alpha \rangle$, $|m, n, \beta \rangle$ described in \thref{FockspstructPBF11} for all values of $0 \leq m$ and $0 \leq n \leq p$, are eigenvectors under the action of the even elements of the Lie superalgebra. (In the above $p$ is considered to be an arbitrary -but fixed- positive integer, parametrizing the family of representations).
\end{proposition}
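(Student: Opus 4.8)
The plan is to assemble the statement from three ingredients that are already in place: the paraparticle realization of \prref{parapartrealiz}, the Fock-like module structure of \thref{FockspstructPBF11}, and the transport-of-modules principle of \leref{Parabreprfrombosrepr}. The organizing observation is that, since $\dim_{\mathbb{C}}V_{0}=\dim_{\mathbb{C}}V_{1}=1$, the target of the realization is precisely $\mathbb{U}\big(\mathfrak{gl}(1/1)\big)\subset P_{BF}^{(1,1)}$, so the entire construction takes place inside the single-degree-of-freedom algebra whose Fock-like representations are explicitly known. Thus the proposition is essentially a corollary obtained by composing known maps and then reading off the resulting action.

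First I would specialize \prref{parapartrealiz} to $\dim_{\mathbb{C}}V_{0}=\dim_{\mathbb{C}}V_{1}=1$, obtaining the algebra homomorphism $J_{P_{BF}}:\mathbb{U}(L)\rightarrow P_{BF}^{(1,1)}$ whose values on an homogeneous basis of $L$ are given by the reduced formulas \equref{JordgenPBFeqeven2d} and \equref{JordgenPBFeqodd2d}. In particular each even generator $X_{i}$ maps to $A(X_{i})N_{b}+D(X_{i})N_{f}+\big(A(X_{i})-D(X_{i})\big)\frac{p}{2}$, and each odd generator $Y_{j}$ maps to $B(Y_{j})Q^{-}+C(Y_{j})Q^{+}$, in terms of the operators of \equref{defoper}. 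The fact that this is a genuine homomorphism of associative algebras is exactly part $\mathbf{(a)}$ of \prref{parapartrealiz}, so no further verification of the bracket-preservation is needed here.

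Next I would recall from \thref{FockspstructPBF11} that $\bigoplus_{n=0}^{p}\bigoplus_{m=0}^{\infty}\mathcal{V}_{m,n}$ carries a module structure over $P_{BF}^{(1,1)}$, and apply (the first part of) \leref{Parabreprfrombosrepr} to the homomorphism $J_{P_{BF}}$ in order to pull this module structure back, defining $z\triangleright\vec{v}:=J_{P_{BF}}(z)\cdot\vec{v}$ for $z\in\mathbb{U}(L)$. Since $J_{P_{BF}}$ need not be an epimorphism, I would invoke only the module-transport statement and not the irreducibility clause of the lemma, which is consistent with the representations being merely decomposable. This immediately yields a $\mathbb{U}(L)$-module, hence a representation of $L$, on the same carrier space, for each fixed positive integer $p$.

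Finally, the explicit formulas \equref{Liesuperalgebrarepresentation} would follow by direct substitution. For the even elements one applies the eigenvalue relations of \prref{NbNfaction}, from which $X_{i}\triangleright$ acts as the single scalar $\big(m+\frac{p}{2}\big)A(X_{i})+\big(n-\frac{p}{2}\big)D(X_{i})$ on both $|m,n,\alpha\rangle$ and $|m,n,\beta\rangle$, establishing the eigenvector assertion. For the odd elements one combines the $Q^{-}$-action of \prref{Q-action} with the $Q^{+}$-action of \prref{Q+action}. The only point demanding attention is the bookkeeping of the $\frac{p}{2}$-shifts built into $N_{b}=\frac{1}{2}\{b^{+},b^{-}\}-\frac{p}{2}$ and $N_{f}=\frac{1}{2}[f^{+},f^{-}]+\frac{p}{2}$ of \equref{defoper}, together with the signs and floor-function terms carried by the $Q^{\pm}$-formulas; I do not expect a genuine obstacle, since all the substantive computations have already been discharged in the preceding propositions and the present statement is their recombination.
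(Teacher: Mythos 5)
Your proposal is correct and follows essentially the same route as the paper: the paper's proof likewise consists of combining \equref{JordgenPBFeqeven2d} with the eigenvalue relations of \prref{NbNfaction}, and \equref{JordgenPBFeqodd2d} with the $Q^{\mp}$-actions of \prref{Q-action} and \prref{Q+action}, the module-transport via \leref{Parabreprfrombosrepr} being exactly the link the paper sets up in its preliminaries. Your explicit remark that only the first clause of \leref{Parabreprfrombosrepr} applies (since $J_{P_{BF}}$ is not an epimorphism, consistent with decomposability) is a point the paper leaves tacit, but it introduces no divergence in method.
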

\begin{proof}
%(sketch) \\
If we combine relation \equref{JordgenPBFeqeven2d} with the relations \equref{10} of \prref{NbNfaction} and relation \equref{JordgenPBFeqodd2d} with the relations \equref{4} of \prref{Q-action} and relations \equref{7} of \prref{Q+action} we arrive at the displayed formulae \equref{Liesuperalgebrarepresentation}.
\end{proof}
At this point we would like to investigate the reducibility of the Lie superalgebra representations constructed in \prref{finalformulaerepresL}. We have the following
\begin{proposition}   \prlabel{reducibility}
For any arbitrary (but fixed) value of the positive integer $p$, the representation of an arbitrary complex Lie superalgebra $\mathbb{L}$ constructed in \prref{finalformulaerepresL} possesses invariant subspaces, in other words it is a reducible representation. Furthermore, it is a decomposable representation, i.e. a decomposable $\mathbb{L}$-module.
\end{proposition}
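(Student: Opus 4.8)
The plan is to exhibit an explicit splitting of the carrier space $\bigoplus_{n=0}^{p}\bigoplus_{m=0}^{\infty}\mathcal{V}_{m,n}$ as a direct sum of nonzero $\mathbb{L}$-invariant subspaces. The key observation, which I would read straight off the action formulae \equref{Liesuperalgebrarepresentation}, is that the integer $m+n$ is conserved by the entire action of $\mathbb{L}$. Indeed, the even generators $X_{i}$ act diagonally, sending $|m,n,\alpha\rangle$ and $|m,n,\beta\rangle$ to scalar multiples of themselves and hence preserving the pair $(m,n)$; while the odd generators $Y_{j}$ act through $Q^{-}$ and $Q^{+}$, sending $|m,n,\cdot\rangle$ only to vectors of the form $|m+1,n-1,\cdot\rangle$ and $|m-1,n+1,\cdot\rangle$, both of which carry the same value of $m+n$. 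The boundary cases, where a target index leaves the admissible range $0\leq n\leq p$ or a $\psi$-vector degenerates, cause no trouble: there the offending summand simply vanishes, which still lies in the relevant subspace.

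Accordingly, for each nonnegative integer $k$ I would set
\[
W_{k} \;=\; \bigoplus_{\substack{m+n=k\\ 0\leq n\leq p,\ m\geq 0}} \mathcal{V}_{m,n},
\]
so that only the finitely many summands with $0\leq n\leq\min(k,p)$ contribute and each $W_{k}$ is finite dimensional. The first step is to verify, directly from \equref{Liesuperalgebrarepresentation}, that $W_{k}$ is stable under every $X_{i}$ and every $Y_{j}$; since these generate $\mathbb{L}$ and the representation is an algebra homomorphism, this already makes each $W_{k}$ a submodule. Moreover each $W_{k}$ is a graded subspace, being a sum of the homogeneous components $\mathcal{V}_{m,n}$, so it is a genuine super-submodule.

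For reducibility it then suffices to point to one proper nonzero invariant subspace, and $W_{0}=\mathcal{V}_{0,0}=\langle|0\rangle\rangle$ is such: the even generators merely scale the vacuum and the odd generators annihilate it, so $W_{0}$ is invariant and proper. For decomposability I would observe that
\[
\bigoplus_{n=0}^{p}\bigoplus_{m=0}^{\infty}\mathcal{V}_{m,n} \;=\; \bigoplus_{k=0}^{\infty} W_{k}
\]
is a direct sum of the invariant subspaces $W_{k}$ with infinitely many nonzero summands, which is precisely the assertion that the module is decomposable.

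The computation is routine once the conserved quantity is identified; the only real content, and the step I would present as the crux, is recognizing that the combined shift patterns of $Q^{-}$ and $Q^{+}$ leave $m+n$ fixed. Conceptually this is the statement that the operator $N_{b}+N_{f}$ commutes with the entire image of $J_{P_{BF}}$ (one checks $[N_{b}+N_{f},Q^{\pm}]=0$), so that $N_{b}+N_{f}$ is an intertwiner whose eigenspace decomposition yields exactly the splitting into the $W_{k}$. There is no genuine obstacle beyond careful bookkeeping of the degenerate boundary subspaces $\mathcal{V}_{0,n}$, $\mathcal{V}_{m,0}$ and $\mathcal{V}_{m,p}$.
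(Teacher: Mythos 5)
Your proof is correct and is essentially the paper's own argument: the conserved quantity $m+n$ yields exactly the invariant subspaces the paper exhibits, your $W_{s}$ for $0\leq s\leq p-1$ being the ``upper'' triangular diagonals $\bigoplus_{i=0}^{s}\mathcal{V}_{s-i,i}$ of \equref{invariantsubspaces2}, your $W_{k+p}$ for $k\geq 0$ the ``lower'' full diagonals $\bigoplus_{i=0}^{p}\mathcal{V}_{k+p-i,i}$ of \equref{invariantsubspaces}, and your splitting $\bigoplus_{k=0}^{\infty}W_{k}$ coinciding with the paper's \equref{completereducibility}. Your closing remark that $N_{b}+N_{f}$ commutes with the image of $J_{P_{BF}}$ (so the $W_{k}$ are its eigenspaces) is a pleasant conceptual gloss the paper does not state explicitly, but it identifies the same decomposition rather than a different route.
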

\begin{proof}
In the following figure all the subspaces constituting the carrier space $\bigoplus_{n=0}^{p} \bigoplus_{m=0}^{\infty} \mathcal{V}_{m,n}$ of the representation are shown
{\scriptsize
\begin{equation}  \eqlabel{invariantsubspaces0}
\left(
  \begin{array}{ccccccccccccc}
    \mathcal{V}_{0,0} & \mathcal{V}_{0,1} & \ldots & \ldots & \mathcal{V}_{0,n} & \ldots & \ldots & \boxed{\mathbf{\mathcal{V}_{0,s}}} & \ldots & \ldots & \mathcal{V}_{0,p-1} & \mathcal{V}_{0,p} \\
    \mathcal{V}_{1,0} & \mathcal{V}_{1,1} & \ldots & \ldots & \mathcal{V}_{1,n} & \ldots & \boxed{\mathbf{\mathcal{V}_{1,s-1}}} & \ldots & \ldots  & \ldots & \mathcal{V}_{1,p-1} & \mathcal{V}_{1,p} \\
    \vdots & \vdots & \vdots & \vdots & \vdots & \vdots & \vdots & \vdots & \vdots & \vdots & \vdots & \vdots  \\
    \ldots & \boxed{\mathbf{\mathcal{V}_{s-1,1}}} & \ldots & \ldots & \ldots & \ldots & \ldots & \ldots & \ldots & \ldots & \ldots & \ldots  \\
    \boxed{\mathbf{\mathcal{V}_{s,0}}} & \ldots & \ldots & \ldots & \ldots & \ldots & \ldots & \ldots & \ldots & \ldots & \ldots & \ldots   \\
    \vdots & \vdots & \vdots & \vdots & \vdots & \vdots & \vdots & \vdots & \vdots & \vdots & \vdots & \vdots \\
    \mathcal{V}_{k,0} & \ldots & \ldots & \ldots &  \ldots & \ldots & \ldots & \ldots & \ldots & \ldots & \ldots & \boxed{\mathbf{\mathcal{V}_{k,p}}} \\
    \mathcal{V}_{k+1,0} & \ldots & \ldots & \ldots & \ldots  & \ldots & \ldots & \ldots & \ldots & \ldots & \boxed{\mathbf{\mathcal{V}_{k+1,p-1}}} & \mathcal{V}_{k+1,p} \\
    \mathcal{V}_{k+2,0} & \ldots & \ldots & \ldots & \ldots  & \ldots & \ldots & \ldots & \ldots & \boxed{\mathbf{\mathcal{V}_{k+2,p-2}}} & \mathcal{V}_{k+2,p-1} & \mathcal{V}_{k+2,p} \\
    \vdots & \vdots & \vdots & \vdots & \vdots & \vdots & \vdots & \vdots & \vdots & \vdots & \vdots & \vdots  \\
    \mathcal{V}_{m,0} & \mathcal{V}_{m,1} & \ldots & \ldots  & \mathcal{V}_{m,n} & \ldots & \ldots & \ldots & \ldots & \ldots & \mathcal{V}_{m,p} & \vdots \\
    \vdots & \vdots & \vdots & \vdots & \vdots & \vdots & \vdots & \vdots & \vdots & \vdots & \vdots & \vdots  \\
    \vdots & \vdots & \vdots & \vdots & \vdots & \vdots & \vdots & \vdots & \vdots & \vdots & \vdots & \vdots  \\
    \boxed{\mathbf{\mathcal{V}_{k+p,0}}} & \ldots & \ldots & \ldots & \ldots & \ldots & \ldots & \ldots & \ldots & \ldots & \ldots & \mathcal{V}_{k+p,p} \\
    \vdots & \vdots & \vdots & \vdots & \vdots & \vdots & \vdots & \vdots & \vdots & \vdots & \vdots & \vdots
  \end{array}
\right)
\end{equation}
}

$\blacktriangleright$ The direct sum of \underline{the lower sequence} of the boxed subspaces, represented in \equref{invariantsubspaces0}, is shown in the following expression (where $k\geq0$)
\begin{equation}  \eqlabel{invariantsubspaces}
\bigoplus_{i=0}^{p} \mathcal{V}_{k+p-i, i}=\mathcal{V}_{k+p, 0}\oplus\mathcal{V}_{k+p-1, 1}\oplus\ldots\oplus\mathcal{V}_{k+1, p-1}\oplus\mathcal{V}_{k, p}
\end{equation}
where $dim(\bigoplus_{i=0}^{p}\mathcal{V}_{k+p-i, i})=2p$. The above subspace is finite dimensional and invariant under the $\mathbb{L}$-action \equref{Liesuperalgebrarepresentation}. Thus, the representation of the Lie superalgebra $\mathbb{L}$ is a reducible one.

$\blacktriangleright$ The direct sum of \underline{the upper sequence} of the boxed subspaces, represented in \equref{invariantsubspaces0}, is shown in the following expression ($0\leq s\leq p-1$)
\begin{equation} \eqlabel{invariantsubspaces2}
\bigoplus_{i=0}^{s}\mathcal{V}_{s-i,i}=\mathcal{V}_{0,s}\oplus\mathcal{V}_{1,s-1}\oplus\ldots\oplus\mathcal{V}_{s-1,1}\oplus\mathcal{V}_{s,0}
\end{equation}
where $dim(\bigoplus_{i=0}^{s}\mathcal{V}_{s-i,i})=2s$. The above subspace is another finite dimensional, invariant subspace under the $\mathbb{L}$-action \equref{Liesuperalgebrarepresentation}.

$\blacktriangleright$ Now we can see that the invariant subspaces defined above i.e. $\bigoplus_{i=0}^{p} \mathcal{V}_{k+p-i, i}$ given by \equref{invariantsubspaces} (for $k \geq 0$) and $\bigoplus_{i=0}^{s} \mathcal{V}_{s-i,i}$ given by \equref{invariantsubspaces2} (for $0 \leq s \leq p-1$), are mutually disjoint and moreover each one of them is disjoint with the sum of all the others. Furthermore their (direct) sum equals the whole carrier space:
\begin{equation}  \eqlabel{completereducibility}
\bigoplus_{n=0}^{p} \bigoplus_{m=0}^{\infty} \mathcal{V}_{m,n} = \bigoplus_{k=0}^{\infty} \left( \bigoplus_{i=0}^{p} \mathcal{V}_{k+p-i, i} \right) \oplus \bigoplus_{s=0}^{p-1} \left( \bigoplus_{i=0}^{s} \mathcal{V}_{s-i,i} \right)
\end{equation}
We have thus shown that the carrier space of the $\mathbb{L}$-representation \equref{Liesuperalgebrarepresentation} decomposes into a direct sum of finite dimensional, invariant subspaces which implies that the $\mathbb{L}$-representation defined by \equref{Liesuperalgebrarepresentation} is a reducible and more specifically a decomposable $\mathbb{L}$-module. Thus the proof is complete.
\end{proof}

Note that while the $\mathcal{V}_{m,n}$ subspaces, described in \thref{FockspstructPBF11}, are invariant under the $\mathbb{L}_{0}$-action of \equref{Liesuperalgebrarepresentation} (i.e. the action of the even elements of the Lie superalgebra) however they are not invariant under the $\mathbb{L}_{1}$-action of \equref{Liesuperalgebrarepresentation} (i.e. the action of the odd elements of the superalgebra). On the other hand, the subspaces $\bigoplus_{i=0}^{p} \mathcal{V}_{k+p-i, i}$ (for $k \geq 0$) and $\bigoplus_{i=0}^{s} \mathcal{V}_{s-i,i}$ (for $0 \leq s \leq p-1$), defined by \equref{invariantsubspaces} and \equref{invariantsubspaces2} respectively, are invariant under the action of both the even and the odd elements of the superalgebra \equref{Liesuperalgebrarepresentation}.

Before closing this paragraph, let us quickly say a few words about the grading: Since we are dealing -in general- with representations of a LS i.e. a $\mathbb{Z}_{2}$-graded Lie algebra, it would be nice if we could describe a suitable $\mathbb{Z}_{2}$-grading for the $\bigoplus_{n=0}^{p} \bigoplus_{m=0}^{\infty} \mathcal{V}_{m,n}$ carrier space. Here of course, \emph{suitable grading of the carrier space} should be interpreted as a grading which will be compatible with the LS grading in such a way that the above described representations become $\mathbb{Z}_{2}$-graded modules. It turns out (see also the examples of the next paragraph) that there are two possible alternatives which satisfy the above requirements. These are both represented in the following figures. We can either assign even-odd grading alternating on the rows of subspaces:
{\scriptsize
\begin{equation}  \eqlabel{grad1}
\xymatrix{
 \bullet\ar@{..}[r] & \bullet\ar@{..}[r]  & \bullet\ar@{..}[r] & \cdots\ar@{..}[r] & \bullet\ar@{..}[r] & \bullet\ar@{~>}[r] &  \textrm{\underline{even} (odd) \underline{subspaces}} \\
 \bullet\ar@{..}[r] & \blacksquare\ar@{..}[r]  & \blacksquare\ar@{..}[r] & \cdots\ar@{..}[r] & \blacksquare\ar@{..}[r] & \bullet\ar@{~>}[r] &  \textrm{\underline{odd} (even) \underline{subspaces}} \\
 \bullet\ar@{..}[r] & \blacksquare\ar@{..}[r]  & \blacksquare\ar@{..}[r] & \cdots\ar@{..}[r] & \blacksquare\ar@{..}[r] & \bullet\ar@{~>}[r] &  \textrm{\underline{even} (odd) \underline{subspaces}} \\
 \bullet\ar@{..}[r] & \blacksquare\ar@{..}[r]  & \blacksquare\ar@{..}[r] & \cdots\ar@{..}[r] & \blacksquare\ar@{..}[r] & \bullet\ar@{~>}[r] &  \textrm{\underline{odd} (even) \underline{subspaces}} \\
 \vdots & \vdots & \vdots & \ddots & \vdots & \vdots
}
\end{equation}
}
or we can assign even-odd grading by alternating on the columns of subspaces:
{\scriptsize
\begin{equation}  \eqlabel{grad2}
\xymatrix{
 \bullet\ar@{..}[d] & \bullet\ar@{..}[d]  & \bullet\ar@{..}[d] & \cdots & \bullet\ar@{..}[d] & \bullet\ar@{..}[d]  \\
 \bullet\ar@{..}[d] & \blacksquare\ar@{..}[d]  & \blacksquare\ar@{..}[d] & \cdots & \blacksquare\ar@{..}[d] & \bullet\ar@{..}[d]  \\
 \bullet\ar@{..}[d] & \blacksquare\ar@{..}[d]  & \blacksquare\ar@{..}[d] & \cdots & \blacksquare\ar@{..}[d] & \bullet\ar@{..}[d]   \\
% \bullet\ar@{..}[d] & \blacksquare\ar@{..}[d]  & \blacksquare\ar@{..}[d] & \cdots & \blacksquare\ar@{..}[d] & \bullet\ar@{..}[d]  \\
 \vdots\ar@{~>}[d] & \vdots\ar@{~>}[d] & \vdots\ar@{~>}[d] & \ddots & \vdots & \vdots \\
 \textrm{\underline{even} (odd)}  & \textrm{\underline{odd} (even)} & \textrm{\underline{even} (odd)} &  &  &
}
\end{equation}
}
We have a number of remarks to make on the above diagrams: \\
First of all notice that we have denoted with a bullet $``\bullet"$ the $\mathcal{V}_{m,n}$ subspaces for which either $m=0$ or $n=0$ or $n=p$ i.e. the $1$-dimensional subspaces, while we have denoted by a black square $``\blacksquare"$ the $\mathcal{V}_{m,n}$ subspaces for which $m\neq0$ and $n\neq0$ and $n\neq p$ i.e. the $2$-dimensional subspaces.  \\
Second, we should emphasize that the values for the degrees of the subspaces assigned to the above diagrams define four inequivalent $\mathbb{Z}_{2}$-gradings for the carrier space $\bigoplus_{n=0}^{p} \bigoplus_{m=0}^{\infty} \mathcal{V}_{m,n}$. The first pair of gradings (represented in \equref{grad1}) alternate the degrees on the succeeding rows (i.e. define the degrees depending on the value of $m$) while the second pair (represented in \equref{grad2}) alternate the degrees on the succeeding columns (i.e. define the degrees depending on the value of $n$). The degree of the initial row or column is of course arbitrary. \\
Finally, in \equref{grad2} the degree of the last column of subspaces depends on whether the value of $p$ is even or odd.

\section{Applications: Some decompositions with respect to low-dimensional Lie algebras and superalgebras}   \selabel{applLSrepr}

In the last part of \seref{realizations} we have described the following chains of inclusions of subalgebras:
\begin{equation}  \eqlabel{iclusions}
\begin{array}{c}
\mathbb{U}(\mathfrak{gl}(m/n))\subset P_{BF} \\
  \\
\mathbb{U}(\mathfrak{sp}(2m))\subset \mathbb{U}(\mathfrak{osp}(1/2m))\subset P_{BF} \\
  \\
\mathbb{U}(\mathfrak{so}(2n))\subset \mathbb{U}(\mathfrak{so}(2n+1))\subset P_{BF} \\
    \\
\mathbb{U}\big(\mathfrak{so}(2n)\oplus \mathfrak{sp}(2m)\big)\cong \mathbb{U}(\mathfrak{so}(2n))\otimes \mathbb{U}(\mathfrak{sp}(2m))\subset \mathbb{U}(L_{00}\oplus L_{01})\subset P_{BF}
\end{array}
\end{equation}
Specifying for the $P_{BF}^{(1,1)}$ algebra, for which a class of irreducible $(\mathbb{Z}_{2}\times\mathbb{Z}_{2})$-graded representations, called Fock-like representations and parametrized by the positive integer $p$ has been described in \seref{singleFockspace}, the above chains of inclusions get their simplest form, i.e. for $m=n=1$ \equref{iclusions} becomes:
\begin{equation}  \eqlabel{iclusions2}
\begin{array}{c}
\mathbb{U}(\mathfrak{gl}(1/1))\subset P_{BF}^{(1,1)} \\
  \\
\mathbb{U}(\mathfrak{sp}(2))\subset \mathbb{U}(\mathfrak{osp}(1/2))\subset P_{BF}^{(1,1)} \\
  \\
\mathbb{U}(\mathfrak{so}(2))\subset \mathbb{U}(\mathfrak{so}(3))\subset P_{BF}^{(1,1)} \\
    \\
\mathbb{U}\big(\mathfrak{so}(2)\oplus \mathfrak{sp}(2)\big)\cong \mathbb{U}(\mathfrak{so}(2))\otimes \mathbb{U}(\mathfrak{sp}(2))\subset \mathbb{U}(L_{00}\oplus L_{01})\subset P_{BF}^{(1,1)}
\end{array}
\end{equation}
It is these \equref{iclusions2} according to which we are now going to present some decompositions of the initial Fock-like space $\bigoplus_{n=0}^{p} \bigoplus_{m=0}^{\infty} \mathcal{V}_{m,n}$. In the next diagrams, following the conventions of \seref{Focklikemodule}  we are going to use bullets $``\bullet"$ or $``\circ"$ to denote $1$-dimensional subspaces and squares $``\blacksquare"$ or $``\square"$ to denote $2$-dimensional subspaces. The filled or empty bullets or squares will be used to denote direct summands of different invariant subspaces i.e. the ``filled" and ``empty" symbols will denote direct summands of invariant subspaces which are not connected to each other through the action of the generators of the corresponding algebras. We will use the special symbol $``\filledstar"$ or $``\smallstar"$ to denote the $1$-dimensional subspace at the top left corner. This subspace is generated by the $|0\rangle$ vector which is usually referred to as the ``ground state" in the physics literature, thus $\filledstar\equiv\smallstar=<|0\rangle>$. Moreover, for the case of the Lie superalgebras ($\mathfrak{gl}(1/1)$, $L_{00}\oplus L_{01}$ and $\mathfrak{osp}(1/2)$) the action of the odd elements will be denoted by continuous arrows $\longrightarrow$ while the action of the even elements will be denoted by dashed arrows $\dashedrightarrow$ (notice that this is not the case for \equref{44} which is the usual Lie algebra $\mathfrak{sp}(2)$ thus all elements are even).

$\blacktriangleright$ \underline{Decompositions with respect to $\mathfrak{gl}(1/1)$:}  \\
The algebra is generated by
\begin{equation} \eqlabel{gen11}
\begin{array}{ccccc}
N_{b}, N_{f} \ \rightsquigarrow even  & & & & Q^{+}, Q^{-} \ \rightsquigarrow odd
\end{array}
\end{equation}
Its commutation-anticommutation relations in the above homogeneous basis read:
\begin{equation}
\begin{array}{c}
\big[N_{b},N_{f}\big]=\big[N_{b},N_{b}\big]=\big[N_{f},N_{f}\big]=0
\end{array}
\end{equation}
\begin{equation}
\begin{array}{ccccc}
\big[N_{b},Q^{\pm}\big]=\mp Q^{\pm} & & , & & \big[N_{f},Q^{\pm}\big]=\pm Q^{\pm}
\end{array}
\end{equation}
\begin{equation}
\begin{array}{ccccc}
\{Q^{+},Q^{+}\}=\{Q^{-},Q^{-}\}=0  & & , & & \{Q^{+},Q^{-}\}=N_{b}+N_{f}
\end{array}
\end{equation}
It can be considered either directly as a subalgebra of $P_{BF}$ or (equivalently) as being produced by the realizations \equref{JordgenPBFeqeven2d}, \equref{JordgenPBFeqodd2d} where we substitute the defining $2\times2$ matrix representation \cite{Scheu3} of $\mathfrak{gl}(1/1)$. The decomposition reads:
{\small
\begin{equation}  \eqlabel{11}
\xymatrix{
    \filledstar\ar@(ur,ul)[]|{Q^{\pm}}\ar@(ul,dl)@{..>}[]|{N_{f},N_{b}} & \ar@<1ex>[dl]|-{Q^{-}}\bullet\ar@(ul,dl)@{..>}[]|{N_{b}}\ar@(ur,dr)@{..>}[]|{N_{f}}\ar@(ur,ul)[]|{Q^{+}}  & \ldots & \bullet & \ar@<1ex>[dl]|-{Q^{-}}\bullet\ar@(ul,dl)@{..>}[]|{N_{b}}\ar@(ur,dr)@{..>}[]|{N_{f}}\ar@(ur,ul)[]|{Q^{+}} & \ldots  & \bullet & \bullet  \\
    \bullet\ar@<1ex>[ur]|-{Q^{+}}\ar@(ul,dl)[]|{Q^{-}}\ar@(dl,dr)@{..>}[]|{N_{b},N_{f}} & \blacksquare & \ldots & \ar@<1ex>[dl]|-{Q^{-}}\blacksquare\ar@<1ex>[ur]|-{Q^{+}}\ar@(ul,dl)@{..>}[]|{N_{b}}\ar@(ur,dr)@{..>}[]|{N_{f}} & \blacksquare & \ldots  & \blacksquare & \ar@<1ex>[dl]|-{Q^{-}}\bullet\ar@(ul,dl)@{..>}[]|{N_{b}}\ar@(ur,dr)@{..>}[]|{N_{f}}\ar@(ur,ul)[]|{Q^{+}} \\
    \vdots & \vdots & \ar@<1ex>[dl]|-{Q^{-}}\ddots\ar@<1ex>[ur]|-{Q^{+}} & \vdots & \vdots & \vdots & \ar@<1ex>[ur]|-{Q^{+}}\ddots\ar@<1ex>[dl]|-{Q^{-}} & \vdots  \\
    \bullet & \ar@<1ex>[dl]|-{Q^{-}}\blacksquare\ar@<1ex>[ur]|-{Q^{+}}\ar@(ul,dl)@{..>}[]|{N_{b}}\ar@(ur,dr)@{..>}[]|{N_{f}} & \ldots & \blacksquare & \blacksquare & \ar@<1ex>[dl]|-{Q^{-}}\cdots\ar@<1ex>[ur]|-{Q^{+}} & \blacksquare & \bullet  \\
    \bullet\ar@<1ex>[ur]|-{Q^{+}}\ar@(ul,dl)[]|{Q^{-}}\ar@(dl,dr)@{..>}[]|{N_{b},N_{f}} & \blacksquare & \ldots & \blacksquare & \ar@<1ex>[dl]|-{Q^{-}}\blacksquare\ar@<1ex>[ur]|-{Q^{+}}\ar@(ul,dl)@{..>}[]|{N_{b}}\ar@(ur,dr)@{..>}[]|{N_{f}} & \ldots & \blacksquare & \bullet   \\
    \vdots & \vdots & \ddots & \vdots\ar@<1ex>[ur]|-{Q^{+}} & \vdots & \ddots & \vdots & \vdots
    }
\end{equation}
}
Notice that the ``diagonal" invariant subspaces shown in \equref{11} are not irreducible. Their vector space dimension (over $\mathbb{C}$) ranges in the even values between $2$ and $2p$, except the $\filledstar$ space in the top left corner which is $1$-dimensional. They are $\mathbb{Z}_{2}$-graded representations  according to either \equref{grad1} or \equref{grad2}. Each one of them (except the $\filledstar$ invariant subspace) can further be decomposed into a direct sum of $2$-dimensional invariant subspaces. The exact branching rules together with multiplicities formulae will be given elsewhere.

$\blacktriangleright$ \underline{Decompositions with respect to the LS $L_{00}\oplus L_{01}$ whose even part is isomorphic}
\underline{to $\mathfrak{sp}(2)\oplus \mathfrak{so}(2)\cong \mathfrak{sp}(2)\oplus \mathfrak{gl}(1)$:}  \\
This LS has been described in \prref{subalgPbf} and \leref{commrel}. It is generated by:
\begin{equation} \eqlabel{gen22}
\begin{array}{ccccc}
N_{b}, N_{f}, (b^{+})^{2}, (b^{-})^{2} \ \rightsquigarrow even  & & & & Q^{+}, Q^{-}, R^{+}, R^{-} \ \rightsquigarrow odd
\end{array}
\end{equation}
Its comm.-anticom. relations, in the above homogeneous basis, and the decomposition read:
{\small
\begin{equation}
\begin{array}{c}

\begin{array}{c}
\big[N_{b},N_{f}\big]=\big[N_{b},N_{b}\big]=\big[N_{f},N_{f}\big]=0
\end{array}   \\
\\
\begin{array}{ccc}
\big[N_{b},(b^{\pm})^{2}\big]=\pm2(b^{\pm})^{2}, & \big[N_{f},(b^{\pm})^{2}\big]=0, &  \big[(b^{+})^{2},(b^{-})^{2}\big]=-4N_{b}-2p
\end{array}

\end{array}
\end{equation}
\begin{equation}
\begin{array}{c}

\begin{array}{cccc}
\big[N_{b},Q^{\pm}\big]=\mp Q^{\pm}, & \big[N_{f},Q^{\pm}\big]=\pm Q^{\pm}, & \big[N_{b},R^{\pm}\big]=\pm R^{\pm} & \big[N_{f},R^{\pm}\big]=\pm R^{\pm}
\end{array}\\
    \\
\begin{array}{cccc}
\big[(b^{+})^{2},Q^{+}\big]=-2R^{+},  & \big[(b^{+})^{2},Q^{-}\big]=0,  &  \big[(b^{+})^{2},R^{+}\big]=0,  & \big[(b^{+})^{2},R^{-}\big]=-2Q^{-}
\end{array}   \\
   \\
\begin{array}{cccc}
\big[(b^{-})^{2},Q^{+}\big]=0,  & \big[(b^{-})^{2},Q^{-}\big]=2R^{-},  & \big[(b^{-})^{2},R^{+}\big]=2Q^{+},  & \big[(b^{-})^{2},R^{-}\big]=0
\end{array}

\end{array}
\end{equation}
\begin{equation}
\begin{array}{c}

\begin{array}{ccc}
   \{Q^{+},Q^{+}\}=\{Q^{-},Q^{-}\}=0 & , & \{R^{+},R^{+}\}=\{R^{-},R^{-}\}=0
\end{array}  \\
    \\
\begin{array}{ccc}
\{Q^{+},Q^{-}\}=N_{b}+N_{f},  &  \{Q^{+},R^{+}\}=0, &  \{Q^{+},R^{-}\}=(b^{-})^{2}
\\
\end{array}  \\
   \\
\begin{array}{ccc}
\{Q^{-},R^{+}\}=(b^{+})^{2},  &  \{Q^{-},R^{-}\}=0, &  \{R^{+},R^{-}\}=N_{b}-N_{f}+p
\end{array}

\end{array}
\end{equation}
}
%and the decomposition reads:
%{\scriptsize
\begin{equation}   \eqlabel{22}
\xymatrix{
\smallstar\ar@(ur,ul)[]|{Q^{\pm}}\ar@(ul,dl)@{..>}[]|{N_{f},N_{b}}\ar@/^/@{..>}[dd]\ar@<1ex>[dr]|-{R^{+}}  & \bullet &  \circ\ar@(ur,dr)@{..>}[]\ar@(ul,dl)@{..>}[]\ar@/^/@{..>}[dd]\ar@<1ex>[dr]|-{R^{+}}\ar@<1ex>[dl]|-{Q^{-}}\ar@(ur,ul)[]|{Q^{+}} & \bullet & \circ\ar@(ur,dr)@{..>}[]\ar@(ul,dl)@{..>}[]\ar@/^/@{..>}[dd]^{(b^{+})^{2}}\ar@<1ex>[dr]|-{R^{+}}\ar@<1ex>[dl]|-{Q^{-}}\ar@(ur,ul)[]|{Q^{+}} & \cdots & \cdots & \odot \\
\bullet & \ar@<1ex>[ul]|-{R^{-}}\square\ar@(ur,dr)@{..>}[]\ar@(ul,dl)@{..>}[]\ar@<1ex>[dr]|-{R^{+}}\ar@/^/@{..>}[dd]\ar@<1ex>[ur]|-{Q^{+}}\ar@<1ex>[dl]|-{Q^{-}} & \blacksquare & \ar@<1ex>[ul]|-{R^{-}}\square\ar@(ur,dr)@{..>}[]\ar@(ul,dl)@{..>}[]\ar@/^/@{..>}[dd]\ar@<1ex>[dr]|-{R^{+}}\ar@<1ex>[ur]|-{Q^{+}}\ar@<1ex>[dl]|-{Q^{-}} & \blacksquare & \ar@<1ex>[ul]|-{R^{-}}\cdots\ar@<1ex>[dl]|-{Q^{-}} & \cdots & \odot \\
\circ\ar@(ur,dr)@{..>}[]|{N_{f}}\ar@(ul,dl)@{..>}[]|{N_{b}}\ar@/^/@{..>}[dd]\ar@/^/@{..>}[uu]^{(b^{-})^{2}}\ar@<1ex>[dr]|-{R^{+}}\ar@<1ex>[ur]|-{Q^{+}}
\ar@(ur,ul)[]|{Q^{-}} & \blacksquare & \ar@<1ex>[ul]|-{R^{-}}\square\ar@(ur,dr)@{..>}[]\ar@(ul,dl)@{..>}[]\ar@<1ex>[dr]|-{R^{+}}\ar@/^/@{..>}[dd]\ar@/^/@{..>}[uu]\ar@<1ex>[ur]|-{Q^{+}}\ar@<1ex>[dl]|-{Q^{-}} & \blacksquare & \ar@<1ex>[ul]|-{R^{-}}\square\ar@(ur,dr)@{..>}[]\ar@(ul,dl)@{..>}[]\ar@/^/@{..>}[dd]^{(b^{+})^{2}}\ar@/^/@{..>}[uu]\ar@<1ex>[dr]|-{R^{+}}\ar@<1ex>[ur]|-{Q^{+}}\ar@<1ex>[dl]|-{Q^{-}} & \cdots & \cdots & \odot \\
\bullet & \ar@/^/@{..>}[uu]\square\ar@(ur,dr)@{..>}[]\ar@(ul,dl)@{..>}[]\ar@/^/@{..>}[dd]\ar@<1ex>[ul]|-{R^{-}}\ar@<1ex>[dr]|-{R^{+}}\ar@<1ex>[ur]|-{Q^{+}}\ar@<1ex>[dl]|-{Q^{-}} & \blacksquare & \ar@<1ex>[ul]|-{R^{-}}\square\ar@(ur,dr)@{..>}[]\ar@(ul,dl)@{..>}[]\ar@<1ex>[dr]|-{R^{+}}\ar@/^/@{..>}[dd]\ar@/^/@{..>}[uu]\ar@<1ex>[ur]|-{Q^{+}}\ar@<1ex>[dl]|-{Q^{-}} & \blacksquare & \ar@<1ex>[ul]|-{R^{-}}\cdots\ar@<1ex>[dl]|-{Q^{-}} & \cdots & \odot \\
\circ\ar@(ur,dr)@{..>}[]|{N_{f}}\ar@(ul,dl)@{..>}[]|{N_{b}}\ar@/^/@{..>}[uu]^{(b^{-})^{2}}\ar@<1ex>[dr]|-{R^{+}}\ar@<1ex>[ur]|-{Q^{+}}\ar@(ur,ul)[]|{Q^{-}} & \blacksquare & \ar@<1ex>[ul]|-{R^{-}}\square\ar@(ur,dr)@{..>}[]\ar@(ul,dl)@{..>}[]\ar@/^/@{..>}[uu]\ar@<1ex>[dr]|-{R^{+}}\ar@<1ex>[ur]|-{Q^{+}}\ar@<1ex>[dl]|-{Q^{-}} & \blacksquare & \ar@<1ex>[ul]|-{R^{-}}\square\ar@(ur,dr)@{..>}[]\ar@(ul,dl)@{..>}[]\ar@<1ex>[dr]|-{R^{+}}\ar@/^/@{..>}[uu]\ar@<1ex>[ur]|-{Q^{+}}\ar@<1ex>[dl]|-{Q^{-}}  & \cdots & \cdots & \odot \\
\vdots & \ar@<1ex>[ul]|-{R^{-}}\vdots\ar@/^/@{..>}[uu]\ar@<1ex>[ur]|-{Q^{+}} & \vdots & \ar@<1ex>[ul]|-{R^{-}}\vdots\ar@/^/@{..>}[uu]\ar@<1ex>[ur]|-{Q^{+}} & \vdots & \ar@<1ex>[ul]|-{R^{-}}\ddots & \cdots & \vdots
}
\end{equation}
%}
The $\odot$ symbol in the last column stands for either $\bullet$ or $\circ$ depending on the value of $p$ being odd or even integer respectively. The representations are $\mathbb{Z}_{2}$-graded representations according to the gradings described in either \equref{grad1} or \equref{grad2}. From the above figure we can readily see that the whole carrier space has a decomposition
\begin{equation}  \eqlabel{decfilempt}
\bigoplus_{n=0}^{p} \bigoplus_{m=0}^{\infty} \mathcal{V}_{m,n}=\oplus(\bullet\oplus\blacksquare) \bigoplus \oplus(\circ\oplus\square) \bigoplus \smallstar
\end{equation}
where $\oplus(\bullet\oplus\blacksquare)$ and $\oplus(\circ\oplus\square)\bigoplus\smallstar$ are the ``filled" and ``empty" invariant subspaces respectively. These are infinite dimensional submodules. It will be interesting to further investigate the decompositions of the above ``filled" or ``empty" infinite dimensional submodules and to compute the corresponding branching rules and multiplicities formulae.

$\blacktriangleright$ \underline{Decompositions with respect to $\mathfrak{osp}(1/2)$:}  \\
This is the well-known parabosonic algebra in a single degree of freedom with generators and commutation-anticommutation relations:
\begin{equation} \eqlabel{gen33}
\begin{array}{ccccc}
N_{b}, (b^{+})^{2}, (b^{-})^{2} \rightsquigarrow even  & & & & b^{+}, b^{-} \ \rightsquigarrow odd \\
\end{array}
\end{equation}
%and commutation-anticommutation relations (in the above homogeneous basis):
{\small
\begin{equation}
\begin{array}{cccc}
 \big[N_{b},(b^{\pm})^{2}\big]=\pm2(b^{\pm})^{2}, & \big[(b^{+})^{2},(b^{-})^{2}\big]=-4N_{b}-2p & \{b^{+},b^{-}\}=2N_{b}+p, & \big[N_{b},b^{\pm}\big]=\pm b^{\pm}
\end{array}
\end{equation}
}
Finally, the decomposition of the $\bigoplus_{n=0}^{p} \bigoplus_{m=0}^{\infty} \mathcal{V}_{m,n}$ carrier space becomes:
\begin{equation}  \eqlabel{33}
\xymatrix{
\filledstar\ar@(ul,dl)@{..>}[]|{N_{b}}\ar@/^/[d]|-{b^{+}} & \bullet\ar@(ul,dl)@{..>}[]|{N_{b}}\ar@/^/[d]|-{b^{+}}  & \bullet\ar@(ul,dl)@{..>}[]|{N_{b}}\ar@/^/[d]|-{b^{+}}  & \cdots & \cdots  & \bullet\ar@(ul,dl)@{..>}[]|{N_{b}}\ar@/^/[d]|-{b^{+}}  & \bullet\ar@(ul,dl)@{..>}[]|{N_{b}}\ar@/^/[d]|-{b^{+}}   \\
\bullet\ar@(ul,dl)@{..>}[]|{N_{b}}\ar@/^/[d]|-{b^{+}}\ar@/^/[u]|-{b^{-}}  & \blacksquare\ar@(ul,dl)@{..>}[]|{N_{b}}\ar@/^/[d]|-{b^{+}}\ar@/^/[u]|-{b^{-}}  & \blacksquare\ar@(ul,dl)@{..>}[]|{N_{b}}\ar@/^/[d]|-{b^{+}}\ar@/^/[u]|-{b^{-}} & \cdots & \cdots  & \blacksquare\ar@(ul,dl)@{..>}[]|{N_{b}}\ar@/^/[d]|-{b^{+}}\ar@/^/[u]|-{b^{-}} & \bullet\ar@(ul,dl)@{..>}[]|{N_{b}}\ar@/^/[d]|-{b^{+}}\ar@/^/[u]|-{b^{-}}    \\
\bullet\ar@(ul,dl)@{..>}[]|{N_{b}}\ar@/^/[d]|-{b^{+}}\ar@/^/[u]|-{b^{-}}  & \blacksquare\ar@(ul,dl)@{..>}[]|{N_{b}}\ar@/^/[d]|-{b^{+}}\ar@/^/[u]|-{b^{-}}   & \blacksquare\ar@(ul,dl)@{..>}[]|{N_{b}}\ar@/^/[d]|-{b^{+}}\ar@/^/[u]|-{b^{-}}   & \cdots & \cdots  & \blacksquare\ar@(ul,dl)@{..>}[]|{N_{b}}\ar@/^/[d]|-{b^{+}}\ar@/^/[u]|-{b^{-}}   & \bullet\ar@(ul,dl)@{..>}[]|{N_{b}}\ar@/^/[d]|-{b^{+}}\ar@/^/[u]|-{b^{-}}   \\
\vdots\ar@/^/[d]|-{b^{+}}\ar@/^/[u]|-{b^{-}}  & \vdots\ar@/^/[d]|-{b^{+}}\ar@/^/[u]|-{b^{-}}   & \vdots\ar@/^/[d]|-{b^{+}}\ar@/^/[u]|-{b^{-}}   & \cdots & \cdots  & \vdots\ar@/^/[d]|-{b^{+}}\ar@/^/[u]|-{b^{-}}   & \vdots\ar@/^/[d]|-{b^{+}}\ar@/^/[u]|-{b^{-}}   \\
\bullet\ar@(ul,dl)@{..>}[]|{N_{b}}\ar@/^/[d]|-{b^{+}}\ar@/^/[u]|-{b^{-}}  & \blacksquare\ar@(ul,dl)@{..>}[]|{N_{b}}\ar@/^/[d]|-{b^{+}}\ar@/^/[u]|-{b^{-}}  & \blacksquare\ar@(ul,dl)@{..>}[]|{N_{b}}\ar@/^/[d]|-{b^{+}}\ar@/^/[u]|-{b^{-}}  & \cdots & \cdots  & \blacksquare\ar@(ul,dl)@{..>}[]|{N_{b}}\ar@/^/[d]|-{b^{+}}\ar@/^/[u]|-{b^{-}}  & \bullet\ar@(ul,dl)@{..>}[]|{N_{b}}\ar@/^/[d]^{b^{+}}\ar@/^/[u]|-{b^{-}}   \\
\vdots\ar@/^/[u]|-{b^{-}} & \vdots\ar@/^/[u]|-{b^{-}}  & \vdots\ar@/^/[u]|-{b^{-}}  & \ddots & \ddots & \vdots\ar@/^/[u]|-{b^{-}}  & \vdots\ar@/^/[u]|-{b^{-}}
}
\end{equation}
The first and the last columns are isomorphic and correspond to the well-known \cite{LiStJ} Fock representation of the single degree of freedom parabosonic algebra, for an arbitrary value of the positive integer $p$. It is a $\mathbb{Z}_{2}$-graded, infinite dimensional, irreducible representation according to the grading \equref{grad1}, which alternates the degrees on the rows of the initial carrier space $\bigoplus_{n=0}^{p} \bigoplus_{m=0}^{\infty} \mathcal{V}_{m,n}$. For $p=1$ it stems from the irreducible Fock representation of the first Weyl algebra commonly known in the physics literature as the Canonical Commutation Relations algebra (CCR). Each of the above invariant columns (except the first and the last one) can be further decomposed in a sum of two irreducible Fock representations of the parabosonic algebra $P_{B}$, but with each one of these representations corresponding to a different value of $p$. From the physics viewpoint this implies that inside the mixed paraparticle system $P_{BF}$ ``live" free parabosons of different order! This is a result which will be discussed in detail in a forthcoming work.

$\blacktriangleright$ \underline{Decompositions with respect to $\mathfrak{sp}(2)$:}  \\
This is a subalgebra of the parabosonic algebra in a single degree of freedom and is generated by:
\begin{equation} \eqlabel{gen44}
\begin{array}{c}
N_{b}, (b^{+})^2, (b^{-})^2
\end{array}
\end{equation}
and commutation relations (in the above basis):
\begin{equation}
\begin{array}{ccc}
\big[N_{b},(b^{+})^{2}\big]=2(b^{+})^{2}, & \big[N_{b},(b^{-})^{2}\big]=-2(b^{-})^{2}, &  \big[(b^{+})^{2},(b^{-})^{2}\big]=-4N_{b}-2p
\end{array}
\end{equation}
The carrier space decomposes now according to the following figure
\begin{equation} \eqlabel{44}
\xymatrix{
\smallstar\ar@(ur,dr)[]|{N_{b}}\ar@/^/[dd] & \circ\ar@(ur,dr)[]|{N_{b}}\ar@/^/[dd] & \circ\ar@(ur,dr)[]|{N_{b}}\ar@/^/[dd] & \cdots &  \cdots & \circ\ar@(ur,dr)[]|{N_{b}}\ar@/^/[dd] & \circ\ar@(ur,dr)[]|{N_{b}}\ar@/^/[dd]^{(b^{+})^{2}}   \\
\bullet\ar@(ur,dr)@{..>}[]\ar@/^/@{..>}[dd] & \blacksquare\ar@(ur,dr)@{..>}[]\ar@/^/@{..>}[dd] & \blacksquare\ar@(ur,dr)@{..>}[]\ar@/^/@{..>}[dd]  & \cdots & \cdots & \blacksquare\ar@(ur,dr)@{..>}[]\ar@/^/@{..>}[dd] & \bullet\ar@(ul,ur)@{..>}[]\ar@/^/@{..>}[dd]   \\
\circ\ar@(ur,dr)[]|{N_{b}}\ar@/^/[dd]\ar@/^/[uu]^{(b^{-})^{2}} & \square\ar@(ur,dr)[]|{N_{b}}\ar@/^/[dd]\ar@/^/[uu] & \square\ar@(ur,dr)[]|{N_{b}}\ar@/^/[dd]\ar@/^/[uu] & \cdots & \cdots & \square\ar@(ur,dr)[]|{N_{b}}\ar@/^/[dd]\ar@/^/[uu] & \circ\ar@(ur,dr)[]|{N_{b}}\ar@/^/[dd]^{(b^{+})^{2}}\ar@/^/[uu]  \\
\bullet\ar@(ur,dr)@{..>}[]\ar@/^/@{..>}[dd]\ar@/^/@{..>}[uu] & \blacksquare\ar@(ur,dr)@{..>}[]\ar@/^/@{..>}[dd]\ar@/^/@{..>}[uu] & \blacksquare\ar@(ur,dr)@{..>}[]\ar@/^/@{..>}[dd]\ar@/^/@{..>}[uu] & \cdots & \cdots & \blacksquare\ar@(ur,dr)@{..>}[]\ar@/^/@{..>}[dd]\ar@/^/@{..>}[uu] & \bullet\ar@(ul,ur)@{..>}[]\ar@/^/@{..>}[dd]\ar@/^/@{..>}[uu]  \\
\circ\ar@(ur,dr)[]|{N_{b}}\ar@/^/[dd]\ar@/^/[uu]^{(b^{-})^{2}} & \square\ar@(ur,dr)[]|{N_{b}}\ar@/^/[dd]\ar@/^/[uu] & \square\ar@(ur,dr)[]|{N_{b}}\ar@/^/[dd]\ar@/^/[uu] & \cdots & \cdots & \square\ar@(ur,dr)[]|{N_{b}}\ar@/^/[dd]\ar@/^/[uu] & \circ\ar@(ur,dr)[]|{N_{b}}\ar@/^/[dd]^{(b^{+})^{2}}\ar@/^/[uu] \\
\vdots\ar@/^/@{..>}[uu] & \vdots\ar@/^/@{..>}[uu] & \vdots\ar@/^/@{..>}[uu] & \ddots & \ddots & \vdots\ar@/^/@{..>}[uu] & \vdots\ar@/^/@{..>}[uu]   \\
\vdots\ar@/^/[uu]^{(b^{-})^{2}} & \vdots\ar@/^/[uu] & \vdots\ar@/^/[uu]  & \ddots & \ddots & \vdots\ar@/^/[uu] & \vdots\ar@/^/[uu]
}
\end{equation}
Notice that we use continuous and dashed arrows to indicate the connection between direct summands of the ``isolated" invariant subspaces. The decomposition is being generated by branching the corresponding representations of $\mathfrak{osp}(1/2)$ given in \equref{33}. The first and the last columns contain -as can be clearly inferred from the figure above- two copies (each) of an infinite dimensional irreducible representation of $\mathfrak{sp}(2)$. Each of the rest columns decomposes as $(\circ\oplus\square)\bigoplus(\oplus\blacksquare)$ where each one of $\circ\oplus\square$ and $\oplus\blacksquare$ further contains two direct summands which are infinite dimensional, irreducible representations. We will come back with more details, branching rules and multiplicities in a forthcoming work.

$\blacktriangleright$ \underline{Decompositions with respect to $\mathfrak{so}(3)\cong \mathfrak{su}(2)$ and $\mathfrak{so}(2)\cong \mathfrak{gl}(1)$:} \footnote{Note that $\mathfrak{so}(3)\cong \mathfrak{su}(2)$ and $\mathfrak{so}(2)\cong \mathfrak{gl}(1)\cong \mathfrak{u}(1)$ as Lie algebras over $\mathbb{C}$}  \\
$\mathfrak{so}(3)\cong \mathfrak{su}(2)$ is the well-known parafermionic algebra in a single degree of freedom. Its generators are:
\begin{equation} \eqlabel{gen55}
\begin{array}{c}
N_{f}, f^{+}, f^{-}
\end{array}
\end{equation}
and its defining commutation relations read
\begin{equation}
\begin{array}{ccc}
 N_{f}=\frac{1}{2}\big[f^{+},f^{-}\big]+\frac{p}{2}, & \big[N_{f},f^{+}\big]=f^{+} & \big[N_{f},f^{-}\big]=-f^{-}
\end{array}
\end{equation}
while $N_{f}$ alone generates $\mathfrak{so}(2)\cong \mathfrak{gl}(1)$. We now get decompositions in rows:
\begin{equation}    \eqlabel{55}
\begin{array}{cccccccc}
\xymatrix{
 \filledstar\ar@/^/[r]|-{f^{+}}\ar@(ur,ul)[]|{N_{f}} & \bullet\ar@(ur,ul)[]|{N_{f}}\ar@/^/[r]|-{f^{+}}\ar@/^/[l]|-{f^{-}}  & \cdots\ar@/^/[l]|-{f^{-}}\ar@/^/[r]|-{f^{+}}  & \bullet\ar@(ur,ul)[]|{N_{f}}\ar@/^/[r]|-{f^{+}}\ar@/^/[l]|-{f^{-}}  &  \bullet\ar@(ur,ul)[]|{N_{f}}\ar@/^/[l]|-{f^{-}} \\
 \bullet\ar@/^/[r]\ar@(ur,ul)[] & \blacksquare\ar@(ur,ul)[]\ar@/^/[r]\ar@/^/[l]  & \cdots\ar@/^/[l]\ar@/^/[r]  & \blacksquare\ar@(ur,ul)[]\ar@/^/[r]\ar@/^/[l]  &  \bullet\ar@(ur,ul)[]\ar@/^/[l] \\
 \bullet\ar@/^/[r]\ar@(ur,ul)[] & \blacksquare\ar@(ur,ul)[]\ar@/^/[r]\ar@/^/[l]  & \cdots\ar@/^/[l]\ar@/^/[r]  & \blacksquare\ar@(ur,ul)[]\ar@/^/[r]\ar@/^/[l]  &  \bullet\ar@(ur,ul)[]\ar@/^/[l] \\
 \vdots & \vdots  & \ddots  & \vdots  & \vdots
}
 & & & & & &
\xymatrix{
 \filledstar\ar@(ur,ul)[]|{N_{f}} & \bullet\ar@(ur,ul)[]|{N_{f}}  & \cdots  &  \bullet\ar@(ur,ul)[]|{N_{f}} \\
 \bullet\ar@(ur,ul)[]|{N_{f}} & \blacksquare\ar@(ur,ul)[]|{N_{f}}  & \cdots  & \bullet\ar@(ur,ul)[]|{N_{f}}  \\
 \vdots & \vdots  & \ddots  & \vdots
}
\end{array}
\end{equation}
For the first of the above two diagrams notice that the first row corresponds to the well-known \cite{StJ} Fock representation of the single degree of freedom parafermionic algebra, for an arbitrary value of the positive integer $p$. It is a finite dimensional, irreducible representation. Its vector space dimension (over $\mathbb{C}$) is $p+1$. For $p=1$ it stems from the irreducible Fock representation the Canonical Anticommutation Relations algebra (CAR). Each of the above invariant rows (except the first one) can be further decomposed in a sum of two irreducible copies of the first row but with each of these copies corresponding to a different value of $p$. From the physics viewpoint this implies that inside the mixed paraparticle system $P_{BF}$ ``live" free parafermions of different order! This is a result -together with the corresponding remark for the $osp(1/2)$ decompositions- which deserve to be discussed in detail in a forthcoming work.

\section{Discussion}

In \seref{realizations} we have studied various algebraic properties of the Relative Parabose Set algebra among others: its structure as the UEA of a $\theta$-colored, $(\mathbb{Z}_{2}\times\mathbb{Z}_{2})$-graded Lie algebra, various Lie algebras and superalgebras contained as subalgebras of $P_{BF}$ and the construction of realizations of Lie algebras and superalgebras either via homomorphisms of the form $J_{P_{BF}}:\mathbb{U}(L)\rightarrow\mathbb{U}\big(\mathfrak{gl}(m/n)\big) \subset P_{BF}$ or via inclusions as subalgebras.

In \seref{singleFockspace}, we have reviewed a family of infinite dimensional, irreducible, $(\mathbb{Z}_{2}\times\mathbb{Z}_{2})$-graded Fock-like representations of $P_{BF}^{(1,1)}$, parameterized by a positive integer $p$. These have been first introduced in \cite{Ya2} and further developed and studied in \cite{KaDaHa3,KaDaHa4}.

In \seref{Focklikemodule} we have ``translated" the above mentioned irreducible paraparticle representations to representations of a Lie superalgebra possessing a $\mathbb{Z}_{2}$-graded, $2\times2$ matrix representation i.e. through homomorphisms of the form $J_{P_{BF}}:\mathbb{U}(L)\rightarrow\mathbb{U}\big(\mathfrak{gl}(1/1)\big)$. We have shown that such representations are in general decomposable and we have provided four schemes for consistent $\mathbb{Z}_{2}$-gradings.

In \seref{applLSrepr}, as an application of the above ideas and techniques, we have constructed the decompositions of the Fock-like spaces under the actions of various low-dimensional Lie algebras and superalgebras: we have studied the cases of $\mathfrak{gl}(1/1)$, $L_{00}\oplus L_{01}$ introduced in \seref{realizations}, $\mathfrak{osp}(1/2)$, $\mathfrak{sp}(2)$, $\mathfrak{so}(2)$ and $\mathfrak{so}(3)$. It is interesting to proceed in computing the exact branching rules and multiplicities formulae for the cases which have been studied and for other Lie (super)algebras as well. This will be done in the frame of a forthcoming work connecting these algebras and their representations with various ideas for applications in physics. The interested reader at this aspect should have a look at the ideas discussed in the last section of \cite{KaDaHa3}.

All the computational results of the present paper, apart from having been proved in the text, have also been checked and verified with the help of the \textsc{Quantum} \cite{GMFD} add-on for Mathematica 7.0: This is an add-on permitting symbolic algebraic computations between operators satisfying a given set of relations, including the definition and use of a kind of generalized Dirac notation for the vectors of the carrier space. What we have verified with the aid of the above package is that all the action formulae displayed in the text always preserve the corresponding algebraic relations.

Before closing this work we should mention a leading idea behind these techniques: From the results of \seref{realizations} it is evident that it will be really interesting to study the general case of the $P_{BF}$ algebra in infinite degrees of freedom and to attempt to make the connection with decompositions and representations of the general case of $\mathfrak{gl}(n/m)$, $\mathfrak{osp}(2n/2m)$, $\mathfrak{osp}(1/2n)$, $\mathfrak{sp}(2n)$, $\mathfrak{so}(2n+1)$ etc. Such a study, will provide us with deeper inside into the representation theory of various Lie algebras and superalgebras and will provide immediate connections with various diversified analytical tools of the physics literature. However, the study and the explicit construction of the irreducible Fock-like represenations of $P_{BF}$ is a highly non-trivial task: we just mention that the -much simpler- cases of the $P_{B}$ and $P_{F}$ algebras have been solved \cite{LiStJ,StJ} only recently. We intend to proceed in facing the above problem either using techniques of induced representations following the ideas of \cite{LiStJ,StJ} or to follow the ``braided" methodology proposed in \cite{KaDa3}.

\section*{Acknowledgements}

{\small
Part of this work was done while KK was a postdoctoral fellow researcher at the Institute of Physics and Mathematics (IFM) of the University of Michoacan (UMSNH) at Morelia, Michoacan, Mexico. The rest part was done during 2011 while KK was a postdoctoral fellow researcher at the School of Mathematics of the Aristotle University of Thessaloniki (AUTh). He has been supported by \textsc{Conacyt}/No. J60060 and by a scholarship from the Research Committee of the Aristotle University of Thessaloniki respectively. The research of AHA was supported by grants \textsc{Cic} 4.16 and \textsc{Conacyt}/No. J60060; he is also grateful to \textsc{Sni}. KK would like to thank Prof. Costas Daskaloyannis for reading through the manuscript and providing valuable observation and criticism. Both authors are grateful to Prof. Mustapha Lahyane for his interest in this work.
}

% Your bilbiography

\end{document}